\newtheorem{theorem}{Theorem}\numberwithin{theorem}{section}
\newtheorem{defn}[theorem]{Definition}
\newtheorem{ex}[theorem]{Example}
\newtheorem{prop}[theorem]{Proposition}
\newtheorem{cor}[theorem]{Corollary}
\newtheorem{rem}[theorem]{Remark}
\numberwithin{equation}{section}
\def\Ker{\operatorname{Ker}}
\def\Im{\operatorname{Im}}
\def\Hom{\operatorname{Hom}}  
\def\rank{\operatorname{rank}} 
\def\Res{\operatorname{Res}}  
\def\L{\mathbb L}
\def\b{{\bf b}}
\def\x{{\bf x}}
\def\y{{\bf y}}
\newcommand{\Z}{\mathbb{Z}}
\newcommand{\R}{\mathbb{R}}
\newcommand{\C}{\mathbb{C}}
\newcommand{\F}{\mathcal{F}}  
\newcommand{\s}{\mathbb{S}}
\newcommand{\G}{\mathfrak{g}}  
\newcommand{\B}{\mathfrak{b}}
\newcommand{\T}{\mathfrak{t}} 
\newcommand{\p}{\mathfrak{p}}
\newcommand{\hooklongrightarrow}{\lhook\joinrel\longrightarrow}
\newcommand{\twoheadlongrightarrow}{\relbar\joinrel\twoheadrightarrow}
\title
{
   Universal Gysin formulas for the universal Hall-Littlewood functions   
}
\author{Masaki Nakagawa   and Hiroshi Naruse}
\thanks{The  first author is partially supported by the Grant-in-Aid for Scientific  Research 
          (C) 15K04876, Japan Society for  the Promotion of Science}
\thanks{The second author is partially supported by the Grant-in-Aid for Scientific  Research 
          (B) 16H03921, Japan Society for  the Promotion of Science}
\address{Graduate School of  Education \endgraf
                       Okayama University \endgraf
                       Okayama  700-8530 \\ Japan 
}
\address{Graduate School of Education \endgraf 
                        University of Yamanashi   \endgraf
                       Kofu  400-8510 \\ Japan
}
\email{nakagawa@okayama-u.ac.jp}
\email{hnaruse@yamanashi.ac.jp} 
\subjclass[2010]{05E05,   14M15, 14N15,  55N20, 55N22, 57R77}
\keywords{Schur $S$-, $P$-, and $Q$-functions, Hall-Littlewood function, 
Complex-oriented generalized cohomology theory, 
Gysin map}
\begin{document}

\begin{abstract}
It is known that the usual Schur  $S$- and $P$-polynomials can be described via the Gysin 
homomorphisms for  flag bundles in the ordinary cohomology theory. Recently, 
P. Pragacz generalized these {\it Gysin formulas} to the Hall-Littlewood polynomials.  
 In this paper,  
we introduce a  {\it universal} analogue  of the Hall-Littlewood polynomials, which 
we call the {\it universal Hall-Littlewood functions}, and give  Gysin formulas 
for various flag bundles in the complex cobordism theory. 
Furthermore, we give two kinds of  the {\it universal} analogue of the schur polynomials, 
and some Gysin formulas for these functions are  established.  

\end{abstract}

\maketitle
\tableofcontents

\section{Introduction}     \label{sec:Introduction}  
\subsection{Gysin formulas}    \label{subsec:GysinFormulas}  
Any  continuous map $f: X  \longrightarrow Y$ between topological spaces 
defines {\it pull-back} homomorphism 
   $f^{*}:  H^{i}(Y)  \longrightarrow H^{i}(X)$ 
in cohomology,  and {\it push-forward} homomorphism 
   $f_{*}:  H_{i}(X)  \longrightarrow H_{i}(Y)$ 
in homology for all $i \in \Z$.  If $X$ is   a compact oriented smooth manifold
of dimension $m$,  the $m$-th homology group $H_{m}(X)$ is isomorphic to 
$\Z$,  with a generator $[X]$ the  {\it fundamental class} of $X$, 
and the following {\it Poincar\'{e} duality map} 
\begin{equation*}
    \mathcal{P}_{X}:  H^{i}(X)   \longrightarrow H_{m-i}(X), \;  \alpha \longmapsto 
\alpha \cap [X] 
\end{equation*} 
is an isomorphism.  If $f: X  \longrightarrow Y$ is a smooth map, with 
$X$ and $Y$  compact oriented smooth manifolds of dimensions $m$ and $n$ 
respectively,  then we get a push-forward homomorphism in cohomology: 
\begin{equation*} 
   f_{*}:  H^{i}(X)  \overset{\sim}{\longrightarrow} H_{m-i}(X)  \overset{f_{*}}{\longrightarrow} 
           H_{m-i}(Y)  \overset{\sim}{\longleftarrow} H^{i-(m-n)}(Y).  
\end{equation*} 
This map is called a  {\it Gysin map} ({\it homomorphism}), 
 {\it push-forward}, or {\it Umkehr map}.  
Intuitively the intersection of  cycles in homology is turned into the product 
of cohomology classes  by means of Poincar\'{e} duality. 
This conversion, together with the computation of the Gysin maps, 
enables us to make some geometric problems into algebraic computations. 
For example, the computation of the Gysin map for various 
flag bundles  is used to determine the cohomology class corresponding to 
 a Schubert variety  (see e.g., Akyildiz \cite{Akyildiz1984}, 
Damon \cite[Theorem 3 (Chern's formula)]{Damon1974}). 
Furthermore,  the similar computation is applied to determining the cohomology 
class of {\it degeneracy locus} (see e.g., Damon \cite[Corollary 3]{Damon1974}, 
Fulton \cite[Chapter 14]{Fulton1998},   Porteous \cite[p.298]{Porteous1971}).  
Thus the computation of  various Gysin maps has a lot of applications in 
geometry, and  there are many formulas describing Gysin maps.  
These formulas are called {\it Gysin formulas} or {\it push-forward formulas} 
in general. 
Although we do not intend  to survey these formulas thoroughly here, 
we shall quote some results related to our work: 
\begin{itemize} 
\item Gysin formulas for flag bundles, Grassmann bundles, or projective bundles 
        are described in  Borel-Hirzebruch \cite[\S 8]{Borel-Hirzebruch1958}, 
        \cite[\S 20]{Borel-Hirzebruch1959},  Buch \cite[\S 7]{Buch2002(Duke)}, 
        Damon \cite{Damon1973}, \cite{Damon1974}, 
        Darondeau-Pragacz \cite{Darondeau-Pragacz2015},  
        Fel'dman \cite[\S 4]{Fel'dman2003},  
        Fulton \cite[\S 14.2]{Fulton1998}, 
       Fulton-Pragacz \cite[Chapter IV, Appendices E, F]{Fulton-Pragacz1998},
       Harris-Tu \cite[\S 2]{Harris-Tu1984},  Ilori \cite{Ilori1978}, 
       Kajimoto-Sugawara \cite{Kajimoto-Sugawara1998},  
       Pragacz  \cite[\S 2]{Pragacz1988},  \cite[\S 4]{Pragacz1996},  \cite{Pragacz2015}, 
       Quillen \cite{Quillen1969}, 
       Sugawara \cite{Sugawara1988},  Tu \cite{Tu2010}, \cite{Tu2015}, 
       Vishik  \cite[\S 5.7]{Vishik2007}.  

\item For a connected complex (semi-simple) Lie group $G_{\C}$ 
         with a Borel subgroup $B$ and a parabolic subgroup $P$ containing $B$, 
         Gysin formulas for the natural projection $G_{\C}/B \longrightarrow G_{\C}/P$ 
         are described in  Akyildiz \cite{Akyildiz1984}, 
         Akyildiz-Carrell \cite{Akyildiz-Carrell1983}, \cite{Akyildiz-Carrell1987}, 
         Fulton-Pragacz \cite[Appendix E]{Fulton-Pragacz1998},  
         Brion \cite{Brion1996}, 
         Kajimoto \cite{Kajimoto1997}.  
\end{itemize} 
As we mentioned above,  most of these formulas are formulated in  the ordinary cohomolgy (or Chow) theory, and many different approaches such as   the {\it residue symbol}  
(Damon \cite{Damon1973}),  
the {\it zeros of holomorphic vector fields on flag varieties} (Akyildiz-Carrell \cite{Akyildiz-Carrell1983}, \cite{Akyildiz-Carrell1987}),  {\it representation theory}  (Brion \cite{Brion1996}), and  the {\it equivariant localization formula of Atiyah-Bott-Berline-Vergne for a torus action} (Tu \cite{Tu2010}, \cite{Tu2015})
 have been used to prove them.

On the other hand,   it is known that the ordinary cohomology theory is
a special case of  {\it complex-oriented} 
generalized cohomology theories which  corresponds  to an  {\it additive} formal group law
(see Example \ref{ex:FGL} (1)). Therefore it is natural to ask 
if the above Gysin formulas in the ordinary cohomology theory can be 
generalized to any complex-oriented generalized cohomology theory. 
For example, Buch \cite[\S 7]{Buch2002(Duke)}  proved a $K$-theoretic  analogue of Pragacz's  Gysin formula \cite{Pragacz1988}.   Quillen \cite{Quillen1969} stated 
(without proof)  the Gysin formula for a projective bundle in the complex cobordism theory.  
As  shown by Quillen \cite{Quillen1969},  the complex cobordism theory $MU^{*}(-)$ 
is {\it universal} among  complex-oriented generalized cohomology theories, 
and therefore it is desirable to formulate these Gysin formulas in the complex cobordism theory. 
The first main purpose of this paper
 is to establish the Gysin formulas for 
general flag bundles  
 in the complex cobordism theory.\footnote{  
{\it Universal Gysin formulas}   
in the title of this paper  signifies  Gysin formulas in the complex cobordism theory 
which is {\it universal} among all complex-oriented generalized cohomology theories.
}   One of our main results, Theorem \ref{thm:Nakagawa-Naruse(GeneralFlagBundles)}, 
is  a generalization of  Pragacz's result  \cite[Proposition 5]{Pragacz2015}   
to the complex cobordism theory.  
Note that Pragacz derived his formula from Brion's result \cite[Proposition 2.1]{Brion1996}
which is proved by the Weyl character formula and the Grothendieck-Riemann-Roch 
theorem.   Here we adopt a topological approach which was developed by 
Bressler-Evens \cite[\S 1]{Bressler-Evens1990}.   Main tools are 
the {\it Becker-Gottlieb transfer} \cite[Theorem 4.3]{Becker-Gottlieb1975} 
 and the {\it Brumfiel-Madsen formula} \cite[Theorem 3.5]{Brumfiel-Madsen1976}. 
 It should be remarked that most of our results in this paper seems to be  valid 
in the {\it algebraic cobordism theory} due to Levine-Morel \cite{Levine-Morel2007}.

\subsection{Gysin formulas for Schur functions}   \label{subsec:GysinFormulasSchurFunctions}
In the previous subsection \S \ref{subsec:GysinFormulas}, we collected 
various Gysin formulas related to our work.    Some formulas involve 
symmetric functions such as {\it Schur} $S$- and  $P$-{\it functions}
 (see e.g.,  Fulton-Pragacz \cite[Chapter IV]{Fulton-Pragacz1998}, 
Pragacz \cite[\S 2]{Pragacz1988}, \cite[\S 4]{Pragacz1996}). 
In order to clarify what we are considering, we shall give one typical 
example: 
Let $E \overset{p}{\longrightarrow} X$ be a complex vector bundle  of rank $n$ 
over a  variety $X$. Let $\pi: G^{1}(E)  \longrightarrow X$ be the associated 
Grassmann bundle of hyperplanes in  $E$.\footnote{
One can naturally identify $G^{1}(E)$ with the associated projective 
bundle $P(E^{\vee})$ of lines in the dual bundle $E^{\vee}$.   
}    On $G^{1}(E)$, we have the {\it tautological exact sequence} of 
vector bundles: 
\begin{equation*} 
       0 \longrightarrow  S  \hooklongrightarrow \pi^{*}(E)  \twoheadlongrightarrow 
     Q  \longrightarrow 0. 
\end{equation*} 
Let $\xi := c_{1}(Q) \in H^{2}(G^{1}(E))$ be the first Chern class 
of the line bundle $Q$, and $x_{1} := \xi, x_{2}, \ldots, x_{n}$ be the {\it Chern roots} 
of $E$.\footnote{
By the {\it splitting principle},  the vector bundle $E$ splits into 
the sum of line bundles when pulled-back to the {\it full flag bundle} 
$\F \ell (E)$ via the projection $\tau: \F \ell (E) \longrightarrow X$.  
The Chern roots  of $E$ are the first Chern classes of these 
line bundles on $\F \ell (E)$.  
}  Then as for the Gysin homomorphism $\pi_{*}:  H^{*}(G^{1}(E)) \longrightarrow 
H^{*}(X)$, it is well-known that 
\begin{equation}   \label{eqn:GysinFormulaProjectiveBundleI}  
   \pi_{*}(\xi^{k})  = s_{k - n  +1}(E) \quad (k \geq 0), 
\end{equation} 
where $s_{i}(E)$ is the $i$-th {\it Segre class} of $E$ (see e.g., Fulton-Pragacz \cite[\S 4.1]{Fulton-Pragacz1998}).  Since the Chern classes $c_{i}(E)$ can be identified with 
the $i$-th elementary symmetric polynomial $e_{i}(\x_{n})$ in 
$\x_{n} = (x_{1}, \ldots, x_{n})$,\footnote{
by means of $\tau^{*}:  H^{*}(X) \longrightarrow H^{*}(\F \ell (E))$, which is known to be 
injective. 
}  
the Segre class $s_{j}(E)$ can be identified with the $j$-th homogeneous 
complete symmetric polynomial $h_{j}(\x_{n})$, which is nothing but the  Schur $S$-polynomial
$s_{(j)}(\x_{n})$ corresponding to the ``one-row'' $(j)$.  Therefore the formula (\ref{eqn:GysinFormulaProjectiveBundleI}) can be interpreted as\footnote{
Strictly speaking, this formula should be considered in $H^{*}(\F \ell (E))$.  
}  
\begin{equation}    \label{eqn:GysinFormulaProjectivBundleII}  
       \pi_{*} (x_{1}^{k})   =   s_{(k - n + 1)}(\x_{n}). 
\end{equation} 
The formula (\ref{eqn:GysinFormulaProjectivBundleII}) can be generalized to the full flag 
bundle $\tau: \F \ell (E) \longrightarrow X$ as follows: 
Let $\lambda = (\lambda_{1}, \lambda_{2}, \ldots, \lambda_{n}) \; (\lambda_{1} \geq \lambda_{2} \geq \cdots \geq \lambda_{n} \geq 0)$ be a partition of length $\leq n$.  
Then Fulton-Pragacz \cite[(4.1)]{Fulton-Pragacz1998} (see also Pragacz \cite[Lemma 2.3]{Pragacz1988}, \cite[Proposition 4.4]{Pragacz1996}, \cite[Example 8]{Pragacz2015})
 showed the following 
formula\footnote{
This formula is also considered in $H^{*}(\F \ell (E))$.  
}: 
\begin{equation}   \label{eqn:Jacobi-TrudiIdentityI}  
     \tau_{*} (x_{1}^{\lambda_{1} + n-1} x_{2}^{\lambda_{2} + n-2} \cdots x_{n}^{\lambda_{n}}) 
  =  s_{\lambda}(\x_{n}). 
\end{equation} 
The formula (\ref{eqn:Jacobi-TrudiIdentityI}) gives the usual Schur $S$-polynomial 
as the push-forward image  of  the  Gysin map from the full flag bundle,   
and is called the {\it Jacobi-Trudi identity} 
in Fulton-Pragacz \cite[p.42]{Fulton-Pragacz1998}.  The essentially identical formula 
has been  obtained by many authors (Damon \cite[Corollary 2]{Damon1974}, 
Harris-Tu \cite[Proposition 2.3]{Harris-Tu1984}, Manivel  \cite[Exercise 3.8.3]{Manivel2001}, 
Pragacz \cite[Proposition 4.4]{Pragacz1996}, \cite[Example 8]{Pragacz2015},  Sugawara \cite[Theorem A]{Sugawara1988}).  
An analogous formula for Schur $P$-polynomial is also given by Pragacz \cite[Corollary 2.7]{Pragacz1988},  \cite[Example 11]{Pragacz2015}.  Recently, Pragacz \cite{Pragacz2015} 
succeeded in generalizing the above Gysin formulas for Schur $S$- and $P$-polynomials 
to the {\it Hall-Littlewood polynomials}, which interpolate between Schur $S$- 
and $P$-polynomials (see Macdonald \cite[Chapter III, \S 2]{Macdonald1995}).

On the other hand,  it is well-known that the usual Schur $S$-polylnomials  
(resp. $P$-polynomials)   {\it represent} 
the {\it Schubert classes} of the complex Grassmannian (resp. Lagrangian 
Grassmannian)   in the ordinary cohomology 
(see e.g., Fulton \cite[\S 9.4]{Fulton1997},  Pragacz \cite[\S 6]{Pragacz1991}).  
In order to generalize the above facts to other cohomology theories  such as 
$K$-theory,  complex cobordism theory (or algebraic cobordism theory) 
and their torus equivariant versions,  various generalizations, analogues, and 
deformations of Schur functions have been introduced by many authors.  
We shall quote some of them for convenience of the readers
(in the following, $\lambda$ (resp. $\nu$) is understood to be a 
partition (resp. strict partition) of length $\leq n$, and $\x_{n} = (x_{1}, \ldots, x_{n})$
is a sequence of  $n$ independent variables, and $\b = (b_{1}, b_{2}, \ldots)$ 
is a sequence of ``deformation paremeters''): 
\begin{itemize} 
\item The {\it factorial Schur polynomials}  $s_{\lambda}(\x_{n}|\b)$ 
          (see Ikeda-Naruse \cite[\S 5.1]{Ikeda-Naruse2009}, 
          Macdonald \cite[Chapter 1, Examples 20]{Macdonald1995}, 
         \cite[6th variation]{Macdonald1992}, 
       Molev-Sagan \cite{Molev-Sagan1999})   represent 
         the Schubert classes of the torus equivariant  cohomology of the 
        complex Grassmannian (see Knutson-Tao \cite[\S 6]{Knutson-Tao2003}, 
        Ikeda-Naruse \cite[Theorem 5.4]{Ikeda-Naruse2009}).

\item The {\it factorial Schur} $P$- and $Q$-{\it polynomials}  $P_{\nu}(\x_{n}|\b)$, 
          $Q_{\nu}(\x_{n}|\b)$  introduced by 
         Ivanov \cite[Definitions 2.10 and 2.13]{Ivanov2004} (see also Ikeda-Mihalcea-Naruse 
         \cite[\S 4.2]{IMN2011}) represent the Schubert classes of the torus equivariant 
         cohomology of the orthogonal and Lagrangian Grassmannians
          (see Ikeda \cite[Theorem 6.2]{Ikeda2007}, Ikeda-Naruse \cite[Theorem 8.7]{Ikeda-Naruse2009}). 

\item The {\it factorial Grothendieck polynomials}  $G_{\lambda}(\x_{n}|\b)$  
         introduced by McNamara \cite[Definition 4.1]{McNamara2006}  (see also Ikeda-Naruse \cite[(2.13), (2.14)]{Ikeda-Naruse2013}) 
represent the Schubert classes of the torus equivariant $K$-theory of the complex 
Grassmannian (see Ikeda-Naruse \cite{Ikeda-Naruse2013}). 

\item The {\it $K$-theoretic factorial} $P$- and $Q$-{\it polynomials}
          $GP_{\nu}(\x_{n}|\b)$, $GQ_{\nu}(\x_{n}|\b)$ introduced by 
       Ikeda-Naruse \cite[Definition 2.1]{Ikeda-Naruse2013} represent 
        the Schubert classes of the torus equivariant $K$-theory of the 
       orthogonal and Lagrangian Grassmannians
       (see Ikeda-Naruse \cite[Theorem 8.3]{Ikeda-Naruse2013}).  
 
\item   The {\it universal factorial Schur} ($S$-) {\it functions}\footnote{
           Notice that $s_{\lambda}(\x_{n}|\b)$ and $G_{\lambda}(\x_{n}|\b)$ are 
           polynomials in $\x_{n}$,  whereas $s^{\L}_{\lambda}(\x_{n}|\b)$ is a 
           formal power series in $\x_{n}$.  
} $s^{\L}_{\lambda}(\x_{n}|\b)$, 
            $P$- and $Q$-{\it functions} $P^{\L}_{\nu}(\x_{n}|\b)$, $Q^{\L}_{\nu}(\x_{n}|\b)$
           were introduced by the authors 
            (see \cite[Definitions 4.1 and  4.10]{Nakagawa-Naruse2016} or (\ref{eqn:Definitions^L(x_n|b)}) and 
           (\ref{eqn:DefinitionP^L(x_n|b)Q^L(x_n|b)}) in this paper).   These functions are 
           {\it universal} analogue of the above polynomials.  
\end{itemize}  
Thus 
our second main purpose of this paper
is the introduction of the 
{\it universal} analogue of the Hall-Littlewood polynomials, 
and to establish the Gysin formulas for various Schur functions
in the complex cobordism theory.     The universal analogue of the Hall-Littlewood polynomials 
denoted by $H^{\L}_{\lambda}(\x_{n};t)$ are defined in Definition \ref{df:DefinitionUHLF}, and  
we call  them  the {\it universal Hall-Littlewood functions}.   Then our main result in this direction is 
 Corollary \ref{cor:Nakagawa-Naruse(CharacterizationUHLF)} which 
gives the universal Hall-Littlewood function as the push-forward image of
the Gysin map from a  partial flag bundle, thus generalizing  Pragacz's result 
(see Corollary \ref{cor:GysinFormulasH-LPolynomials}).  
The Jacobi-Trudi identity (\ref{eqn:Jacobi-TrudiIdentityI}) can also be 
formulated in the complex cobordism theory (see Corollary \ref{cor:Nakagawa-Naruse(CharacterizationUSF)}).  With regard to the universal Schur functions 
$s^{\L}_{\lambda}(\x_{n})$ and the universal Hall-Littlewood functions 
$H^{\L}_{\lambda}(\x_{n}; t)$, a comment is in order: 
The usual Hall-Littlewood polynomial denoted by $P_{\lambda}(x_{1}, \ldots, x_{n}; t)$
in Macdonald's book \cite[Chapter III, \S 2]{Macdonald1995}  
reduces to the usual Schur polynomial  $s_{\lambda}(x_{1}, \ldots, x_{n})$ under 
the specialization $t = 0$.   However,  we found that our $H^{\L}_{\lambda}(\x_{n}; t)$ 
does not necessarily reduce to the universal  Schur function $s^{\L}_{\lambda}(\x_{n})$ 
when $t = 0$.  Thus the specialization $H^{\L}_{\lambda}(\x_{n}; 0)$ gives 
another  universal analogue of Schur functions, which we call 
the {\it new universal Schur functions}.\footnote{
It is desirable to give these functions more specific name which 
characterize them (a poposal will be given in \S \ref{subsec:Damon'sResolution}).    
}   We shall discuss these new functions 
denoted $\s^{\L}_{\lambda}(\x_{n})$ (and their factorial version denoted 
$\s^{\L}_{\lambda}(\x|\b)$) separately in the final section  \S \ref{sec:NUFSF}. 
Among our results in this section are Theorem \ref{thm:GysinFormulasNUFSF}
which reveals the difference between the ``old''  and the ``new'' universal 
factorial Schur functions.   As an application of the Gysin formulas for 
the new universal factorial Schur functions, we formulate 
the {\it Thom-Porteous fromula} in  the complex cobordism theory
(see Theoren \ref{thm:Thom-PorteousFormulaComplexCobordism}).  
Moreover, we shall show that the new universal factorial Schur functions 
{\it represent}  the Schubert classes of the complex Grassmannian 
in the complex cobordism theory (see Theorem \ref{thm:DamonClass}).

\subsection{Organization of the paper}
The paper is organized as follows:  In Section 2, we shall give 
topological preliminaries   needed to develop our work. 
The key concepts are {\it complex-oriented generalized cohomology 
theory},  {\it Gysin maps}, 
{\it Becker-Gottlieb transfer}, {\it Brumfiel-Madsen formula}, 
{\it Bressler-Evens formula}.  Various types of  {\it Gysin formulas} 
are reviewed at the end of this section.   
Section 3 is devoted to the introduction of the  {\it universal} analogue 
of the usual Schur $S$-,  $P$-, $Q$-, and Hall-Littlewood polynomials.  
Especially, the {\it universal Hall-Littlewood functions}, which are 
the central theme of this paper, are introduced. 
In Section 4, after reviewing the Gysin formulas for various Schur functions, 
we shall give the {\it universal} analogues of these formulas. 
In order to establish these formulas, the Bressler-Evens formula 
plays the crucial role.   
In Section 5, we introduce the {\it new universal factorial Schur functions}. 
If we set all the deformation parameters  to be $0$, the {\it new universal Schur functions} 
can be obtained, and these functions coincide with the universal Hall-Littlewood 
functions under the specialization $t = 0$.  We also give some Gysin formulas 
for these  new universal Schur functions.  

\vspace{0.3cm}  

\textbf{Acknowledgments.} \quad    
We would like to thank Takeshi Ikeda,  Thomas Hudson, Tomoo Matsumura  for helpful comments and valuable conversations.  Especially, Tomoo Matsumura kindly explained 
his recent work (Hudson-Matsumura \cite{Hudson-Matsumura2016}) to us, 
that is closely related to  our current work.  

\section{Topological preliminaries}  \label{sec:TopologicalPreliminaries}  
\subsection{Complex-oriented generalized cohomology theory}   \label{subsec:COGCT}  
A {\it generalized cohomology theory}  $h^{*}(-) =  \bigoplus_{n \in \Z} h^{n} (-)$ is a contravariant functor from the category  $\mathcal{C W}$ of  CW complexes  to the category of graded 
abelian  groups which satisfies  all the axioms of Eilenberg-Steenrod \cite[Chapter I, 3c]{Eilenberg-Steenrod1952}    except the {\it dimension 
axiom}.   Thus  $h^{n}(\mathrm{pt})$ is not necessarily zero even if $n \neq 0$.  Here   $\mathrm{pt}$ means a space consisting of a single point.  The cohomology  $h^{*} := h^{*}(\mathrm{pt})$ of a point 
is called the {\it coefficient group}.     
 In what follows, we assume that the theory $h^{*}(-)$ is {\it multiplicative}, that is, for 
CW pairs $(X, A)$  and $(Y, B)$, there exists an {\it external product} 
\begin{equation*} 
    h^{k}(X, A)  \otimes h^{l}(Y, B)  \longrightarrow h^{k + l}((X, A) \times (Y, B))  \quad k, l \in \Z,  
\end{equation*} 
that satisfies certain axioms (see e.g., Dold \cite[\S 4]{Dold1962}). 
Under this assumption,   $h^{*}$ becomes a graded-commutative ring, 
and for a space $X$,  the cohomology  ring $h^{*}(X)$ has  an $h^{*}$-module structure. 
Furthermore, if we consider the {\it infinite} CW complexes, we need suitable  axioms about limits such as the {\it  additivity axiom}  or {\it wedge axiom}  due to Milnor \cite{Milnor1962}.  
In what follows,  when we refer to a {\it generalized cohomology theory},
it means  a multiplicative generalized cohomology theory 
defined on $\mathcal{C W}$ satisfying the additivity axiom unless otherwise stated.  
Let $\tilde{h}^{*}(-)$ denote the corresponding {\it reduced} cohomology theory,\footnote{
For a CW-complex $X$ with base point $x_{0}$, 
the reduced cohomology  $\tilde{h}^{*}(X)$ is defined  to be $h^{*}(X, x_{0})$.
}      and let $j: \C P^{1} \hooklongrightarrow 
\C P^{\infty}$ be the canonical inclusion of $\C P^{1} \approx  S^{2}$ into the infinite 
complex projective space. 

\begin{defn} [Adams \cite{Adams1974}, Part II, p.37; Switzer \cite{Switzer1975}, \S 16.27] 
   A generalized cohomology theory $h^{*}(-)$ is called complex-orientable  if  there exists an element $x^{h}  \in 
  \tilde{h}^{2}(\C P^{\infty})$ such that $j^{*}(x^{h})$ is a generator of $\tilde{h}^{2}(\C P^{1}) 
\cong \tilde{h}^{2}(S^{2}) \cong \tilde{h}^{0}(S^{0}) \cong h^{0}(\mathrm{pt}) = h^{0}$. 
\end{defn} 
If this element $x^{h}$ is specified, then $h^{*}(-)$ is said to be {\it complex-oriented}, and 
$x^{h}$ is called the {\it orientation class}.  
Then it is known that the cohomology  ring of  the infinite projective space $\C P^{\infty}$ is $h^{*}(\C P^{\infty})
= h^{*}[[x^{h}]]$, a formal power series ring with   the  given   generator $x^{h}  \in \tilde{h}^{2}(\C P^{\infty})$ (see Adams \cite[Part II, Lemma 2.5]{Adams1974}).    

Complex-orientability implies a lot of useful properties. 
We recall here some of them.

\subsubsection{Chern classes}  
For a complex vector bundle $E  \overset{p}{\longrightarrow}  X$, one can define 
the {\it $h^{*}$-theory Chern classes}  $c^{h}_{i}(E)  \in h^{2i}(X) \; (i = 0, 1, 2, \ldots, n = \rank E)$
by the {usual} Grothendieck's method (see Conner-Floyd \cite[Theorem 7.6]  
{Conner-Floyd1966}, Grothendieck \cite[\S 3]{Grothendieck1958}, Switzer \cite[Theorem 16.2]{Switzer1975}).  The total Chern class of $E$ is given by 
 $c^{h}(E) := \sum_{i = 0} ^{n} c^{h}_{i}(E)$.   Then the usual Whitney product formula 
is given by $c^{h}(E \oplus F) = c^{h}(E) \cdot c^{h}(F)$ for two complex vector 
bundles $E$, $F$.

\subsubsection{Formal group law}     \label{subsubsec:FGL}   
If $L$, $M$ are complex line bundles over $X$, then 
\begin{equation*} 
     c_{1}^{h}(L  \otimes M)  =  F_{h} (c_{1}^{h}(L),  c_{1}^{h}(M)), 
\end{equation*} 
where   
\begin{equation*} 
      F_{h} (X, Y)    =  X + Y + \sum_{i, j \geq 1} a_{i, j}^{h}  X^{i} Y^{j}   \in  h^{*}[[X, Y]]  \quad 
     (a_{i, j}^{h} \in  h^{2(1 - i - j)}) 
\end{equation*} 
 is   a  (one dimensional commutative) formal group law  over the graded ring $h^{*}$ 
associated  with the cohomology theory $h^{*}(-)$.  
Then  the formal power series $F_{h}(X, Y)$ satisfies the conditions
\begin{enumerate} 
\item [(i)]  $F_{h}(X, 0) = X$, $F_{h}(0, Y) = Y$, 
\item [(ii)] $F_{h}(X, Y) = F_{h}(Y, X)$, 
\item [(iii)]  $F_{h}(X, F_{h}(Y, Z)) =  F_{h} (F_{h}(X, Y), Z)$. 
\end{enumerate} 
We shall use this formal group law to define the {\it formal sum}, 
{\it formal inverse}, and {\it formal subtraction}.  
For two  indeterminates $X$, $Y$, the formal sum $X +_{F} Y$ is defined 
as 
\begin{equation*} 
  X +_{F} Y := F_{h}(X, Y) = X + Y + \sum_{i, j \geq 1} a^{h}_{i, j} X^{i}Y^{j}  
\in h^{*}[[X, Y]]. 
\end{equation*} 
Denote by 
\begin{equation*}  
     [-1]_{F} (X)   = \iota_{F}(X)  = \overline{X} =   -X + \sum_{j \geq 2} c^{h}_{j} X^{j}   \in h^{*}[[X]]
\end{equation*} 
the formal inverse series.  
To be precise,  $[-1]_{F}(X)$ is the unique formal power series satisfying the condition  
$F_{h}  (X,  [-1]_{F}(X))  \equiv 0$, or equivalently $X +_{F}   [-1]_{F}(X) = 0$. 
This formal inverse allows us to define the formal subtraction: 
\begin{equation*} 
   X -_{F} Y :=  X +_{F}  [-1]_{F}(Y) = X +_{F}  \overline{Y}. 
\end{equation*}  
Finally,  we define $[0]_{F}(X) := 0$, and inductively,  
\begin{equation*}
   [n]_{F}(X)  :=  [n-1]_{F}(X) +_{F} X  =  F_{h}([n-1]_{F}(X), X)  =   \underbrace{X +_{F} X +_{F}  \cdots +_{F} X}_{n}  
\end{equation*}
for $n \geq 1$.  We also define    $[-n]_{F}(X) := [n]_{F}([-1]_{F}(X)) = [-1]_{F}([n]_{F}(X))$ 
for $n \geq 1$.     
We call $[n]_{F}(X)$ the {\it $n$-series} in the following.  

\begin{ex}   \label{ex:FGL}  
\quad 
\begin{enumerate} 
\item For the ordinary cohomology theory $($with integer coefficients$)$ $h = H$, 
the coefficient ring  is $H^{*} = H^{*}(\mathrm{pt}) = \Z$ $(H^{0} = \Z$, $H^{k} = 0 \; (k \neq 0))$.  
     We choose  the standard orientation, namely the class of a hyperplane $x^{H}  \in \tilde{H}^{2}(\C P^{\infty})$.  Then the associated formal group law is  the {\it additive} formal group law 
       $F_{H}(X, Y) = F_{a} (X, Y) =  X + Y$, and 
       the formal inverse is given by $[-1]_{H}(X) = -X$.

\item  For the $($topological$)$ $K$-theory  
   $h = K$,  the coefficient ring  is $K^{*} = K^{*}(\mathrm{pt}) = \Z[\beta, \beta^{-1}]$, 
with $\beta  := 1 -  \eta_{1}^{\vee}    \in K^{-2}(\mathrm{pt}) \cong  \tilde{K}(S^{2})$, 
where $\eta_{1}$ stands for the {\it tautological} $($or Hopf$)$ line bunlde over $\C P^{1} \cong S^{2}$, and $\eta_{1}^{\vee}$ its dual. 
We choose  the standard orientation 
 $x^{K} :=    \beta^{-1}(1 - \eta_{\infty}^{\vee}) \in \tilde{K}^{2}(\C P^{\infty})$, where $\eta_{\infty}$ 
stands for the tautological line bundle   over $\C P^{\infty}$.\footnote{
We adopt the  convention due to Bott \cite[Theorem 7.1]{Bott1969}, 
Levine-Morel \cite[Example 1.1.5]{Levine-Morel2007}
so that the $K$-theory first Chern class of a line bundle $L$ (over a space $X$) 
is given by $c^{K}_{1}(L) = \beta^{-1}(1 - L^{\vee})$, where $L^{\vee}$ denotes the dual 
bundle of $L$.  In this convention,  the orientation class $x^{K}$ is equal to 
the $K$-theory first Chern class of  the  bundle  $\eta_{\infty}$, 
namely $c^{K}_{1}(\eta_{\infty}) = \beta^{-1}(1  - \eta_{\infty}^{\vee})$.  
}   
Then the associated formal group law is the {\it multiplicative} formal group law
 $F_{K}(X, Y) = F_{m}(X, Y) = X + Y -  \beta XY$,  
and the formal inverse is given by 
  \begin{equation*} 
    [-1]_{K}(X)  =  -\dfrac{X}{1 -  \beta X}  =  -X - \beta X^2 - \beta^{2} X^3 - \beta^{3} X^4 - \cdots.  
  \end{equation*}  
     
\item For the complex cobordism theory $h  = MU$,   
        the coefficient ring $MU^{*} = MU^{*}(\mathrm{pt})$ is a polynomial algebra over $\Z$ 
      on generators of degrees $-2, -4, \ldots $ $($see e.g., Adams \cite[Part II, Theorem 8.1] {Adams1974}$)$. 
       As in Adams \cite[Part II, Examples (2.4)]{Adams1974}, Ravenel \cite[Example 4.1.3]{Ravenel2004},
     we take the orientation class $x^{MU}  
       \in \tilde{MU}^{2}(\C P^{\infty})$ to be the $($stable$)$ homotopy class of the map 
     $\C P^{\infty} \simeq BU(1) \overset{\sim}{\longrightarrow}  MU(1)$, 
     where $MU(1)$ denotes the Thom space of the universal line bundle over $BU(1)$. 
     Then the associated formal group law 
       \begin{equation*} 
          F_{MU}(X, Y) = X + Y + \sum_{i, j \geq 1} a^{MU}_{i, j} X^{i} Y^{j}, 
          \quad a^{MU}_{i, j}  \in MU^{2(1 -i - j)},  
       \end{equation*} 
       is a {\it universal formal group law} first shown by 
       Quillen \cite[Theorem 2]{Quillen1969}.   To be precise,  
      for any formal group law $F$ over a commutative ring 
       $R$ with unit, there exists a unique ring homomorphism 
      $\theta: MU^{*} \longrightarrow R$ 
       such that $F (X, Y) =  (\theta_{*} F_{MU})(X, Y) := 
      X  + Y + \sum_{i, j \geq 1} \theta (a^{MU}_{i, j}) X^{i}Y^{j}$.  
       Quillen also showed that the coefficient ring $MU^{*}$ is 
       isomorphic to the {\it Lazard ring} $\L$ $($see \S $\ref{subsec:UFGL}$ in 
this paper$)$.  
\end{enumerate} 
\end{ex} 

\subsection{Gysin maps} 
For a certain kind of map $f: X \longrightarrow Y$ between spaces, 
 the so-called {\it Gysin map}, {\it push-forward}, or {\it Umkehr map},  usually denoted by  
$f_{*}:  h^{*}(X) \longrightarrow h^{*}(Y)$ can be defined. 
Here are some examples of Gysin maps:  
\begin{itemize}  
\item (Classical Gysin map in the ordinary cohomology theory): 
    For a smooth map 
\begin{equation*} 
    f:  M \longrightarrow N,  
\end{equation*}     
between compact oriented smooth manifolds,   the {\it Gysin map} 
\begin{equation*} 
    f_{*}:  H^{q}(M)  \longrightarrow H^{q  - (\dim M - \dim N)}(N) 
\end{equation*} 
can be defined by $f_{*} :=  \mathcal{P}_{N}^{-1} \circ f_{*} \circ \mathcal{P}_{M}$, 
where $\mathcal{P}_{M}$ (resp. $\mathcal{P}_{N}$) denotes the Poincar\'{e} duality 
isomorphism from cohomology to homology. 

\item (Integration along (over)  the fiber): 
         For a fibration  $F \hooklongrightarrow E  \overset{\pi}{\longrightarrow} B$ with 
         the base $B$ simply-connected and  the fiber $F$  a compact connected manifold, 
         Borel-Hirzebruch \cite[\S 8]{Borel-Hirzebruch1958}  
         defined a push-forward map called the {\it integration along the fiber}:  
         \begin{equation*} 
                  \pi_{*}  =  \natural :  H^{q}(E)  \longrightarrow  H^{q - \dim F} (B).  
        \end{equation*} 
        
\item  (Gysin map in the $K$-theory $K(-)$):  
           For a proper morphism  $f: X \longrightarrow Y$ of non-singular varieties, 
           Grothendieck constructed an additive map $f_{!}:  K_{\circ} (X)  \longrightarrow K_{\circ} (Y)$  defined by 
          \begin{equation*} 
                    f_{!}([\mathcal{F}]) :=  \sum_{i \geq 0}  (-1)^{i} [R^{i}f_{*} (\mathcal{F})], 
          \end{equation*} 
           for $\mathcal{F}$ a coherent sheaf.  Here $R^{i} f_{*}(\mathcal{F})$ is {\it Grothendieck's higher direct image sheaf} (see e.g., Fulton \cite[\S 15.1]{Fulton1998}).

\item (Gysin map in the complex cobordism theory $MU^{*}(-)$):  
          In \cite{Quillen1971}, Quillen gave a geometric interpretation of the 
         complex cobordism theory $MU^{*}(X)$, where $X$ is assumed to be 
        a manifold.   In his interpretation, an element of $MU^{*}(X)$ is given 
       by a {\it cobordism class} of a {\it proper} and {\it complex-oriented} map  $f: Z \longrightarrow  X$.  
     A proper complex-oriented map $g: X \longrightarrow Y$ 
      of dimension $d$ induces a map 
      \begin{equation*} 
            g_{*}:   MU^{q}(X)    \longrightarrow MU^{q-d}(Y) 
      \end{equation*} 
      which sends the cobordism class of $f: Z  \longrightarrow X$ into the cobordism class 
     of $g \circ f:  Z  \longrightarrow Y$.  
\end{itemize}

All these Gysin maps have the common properties which can be  axiomatized as follows (see Bressler-Evens \cite[p.801, (E)]{Bressler-Evens1990}, 
Levine-Morel \cite[Definition 1.1.2]{Levine-Morel2007}, Quillen  \cite[\S 1]{Quillen1971}): 
Let $h^{*}(-)$ be  a  complex-oriented generalized cohomology theory  defined on 
a suitable category of spaces.  For a morphism $f: X  \longrightarrow Y$, one has 
a Gysin  map 
$f_{*}:  h^{*}(X)  \longrightarrow h^{*}(Y)$ having the following basic properties: 
\begin{enumerate}  
\item (Naturality):   For a composite $g \circ f:  X  \overset{f}{\longrightarrow}  Y  \overset{g}{\longrightarrow} 
Z$,  one has $(g \circ f)_{*}   = g_{*} \circ f_{*}$.  

\item (Projection formula):   For $x   \in h^{*}(X)$ and $y \in h^{*}(Y)$, one has 
         $f_{*}(f^{*}(y) \cdot x)  = y \cdot f_{*}(x)$.  

\item (Base-change): For the following commutative diagram
\begin{equation*} 
 \begin{CD} 
            X \times_{Z} Y    @>{\tilde{f}}>>         Y \\
                @V{\tilde{g}}VV                                 @VV{g}V \\
            X                        @>{f}>>                Z,       \\
\end{CD} 
\end{equation*} 
one has $f^{*} \circ g_{*} = \tilde{g}_{*} \circ \tilde{f}^{*}$. Here $X \times_{Z} Y$ denotes 
the fiber product of $X$ and $Y$ over $Z$, namely $X \times_{Z} Y  = \{ (x, y)  \in X \times Y \;   | \; f(x) = g(y) \}$.  
\end{enumerate}

\subsection{Becker-Gottlieb transfer}  
Let $F   \overset{\iota}{\hooklongrightarrow}  E  \overset{\pi}{\longrightarrow}  B$ be a fiber bundle
 whose fiber $F$ is a compact smooth manifold, whose structure group $G$ is a compact 
Lie group acting smoothly on $F$, and whose base space $B$ is a finite complex. 
In this setting, Becker-Gottlieb \cite[\S 3]{Becker-Gottlieb1975} constructed 
a {\it stable} map\footnote{
For a suitable positive integer $m$, we have a map 
\begin{equation*} 
    \tau (\pi):     S^{m} (B^{+})  \longrightarrow S^{m}  (E^{+}),  
\end{equation*} 
where $S$ denotes the reduced suspension.  
}
\begin{equation*} 
    \tau (\pi)  : B^{+} \longrightarrow E^{+}, 
\end{equation*} 
 called the {\it Becker-Gottlieb transfer}. 
Here $B^{+}$ means  the union of $B$ with a point. 
  For any generalized cohomology theory $h^{*}(-)$, the map 
$\tau  (\pi): B^{+} \longrightarrow E^{+}$ induces a  ``wrong-way''  degree-preserving  homomorphism 
\begin{equation*} 
   \tau (\pi)^{*}:  h^{*}(E)  \longrightarrow h^{*}(B),  
\end{equation*}    
which is also called the Becker-Gottlieb transfer.  $\tau (\pi)^{*}$  is {\it not} a ring homomorphism, 
but is an $h^{*}(B)$-module homomorphism (Becker-Gottlieb \cite[(5.3)]{Becker-Gottlieb1975}), that is, 
 the following formula holds: 
\begin{equation*} 
    \tau (\pi)^{*}(  \pi^{*}(x) \cdot y)  =  x \cdot \tau (\pi)^{*} (y),  \quad 
    x  \in h^{*}(B), \;   y  \in  h^{*}(E). 
\end{equation*} 
Furthermore the Gysin map and the Becker-Gottlieb transfer are related as follows
(see Becker-Gottlieb \cite[Theorem 4.3]{Becker-Gottlieb1975}): 
\begin{equation}     \label{eqn:Becker-GottliebTransferGysinMap}  
    \tau (\pi)^{*} (x)   =  \pi_{*} (\chi^{h}  (T_{\pi}) \cdot  x)   \quad   (x  \in h^{*}(E)). 
\end{equation} 
Here $T_{\pi}$ is 
the ({\it tangent}) {\it bundle along the fibers} ({\it of $\pi$}) (see e.g., Borel-Hirzebruch \cite[\S 7.4]{Borel-Hirzebruch1958}),\footnote{ 
If we assume that $E$, $B$ are also smooth manifolds,
$T_{\pi}$ is a subbundle of the tangent bundle $T(E)$ of $E$.  
The fiber of $T_{\pi}$   over a point $y \in E$ consists of  all tangent vectors 
at the point $y$ which are tangent to the fiber $(\cong F)$ through $y$.  
If we denote by $\iota: F \hooklongrightarrow E$ the fiber inclusion, then 
$\iota^{*}(T_{\pi})  \cong T(F)$, the tangent bundle of $F$.  
} 
and $\chi^{h}  (T_{\pi})$ denotes the  $h^{*}$-theory Euler class of $T_{\pi}$.  
Here $T_{\pi}$ is regarded as a real vector bundle, and assumed to be {\it $h^{*}$-oriented}  (For the notion of $h^{*}$-orientablility, see Atiyah-Bott-Shapiro \cite[\S 12]{Atiyah-Bott-Shapiro1964},  Dold \cite[\S 4]{Dold1962}). 
The $h^{*}$-theory Euler class $\chi^{h} (T_{\pi})$ is defined with respect to this orientation. 
In practice (see Bressler-Evens \cite[\S 1]{Bressler-Evens1990} and 
\S \ref{subsec:Bressler-EvensFormula}),  we require that the fiber $F$
is smooth and {\it almost complex},  and the structure group of $F$ preserves the almost 
complex structure.  Hence the tangent bundle $T (F)$ has a complex vector bundle structure, and so does the bundle along the fibers $T_{\pi}$.  
If the cohomology theory $h^{*}(-)$ is  complex-oriented, then 
the $h^{*}$-theory Euler class $\chi^{h} (T_{\pi})$ is nothing but the
$h^{*}$-theory {\it top}  Chern class $c^{h}_{f}(T_{\pi})$, where $2f$
is the {\it real} dimension of $F$.

\subsection{Brumfiel-Madsen formula}   \label{subsec:Brumfiel-Madsen}  
Let $G$ be a compact connected (semi-simple) Lie group (of rank $\ell$)  
 with a maximal torus $T$ ($\cong (U(1))^{\ell}$). 
Let $H$ be a closed connected subgroup of $G$ of maximal rank, i.e., $T \subset H$. 
Denote by $W_{G}$ and $W_{H}$ the Weyl group of $G$ and $H$ respectively. 
We then have a natural inclusion $W_{H}  \subset W_{G}$.  
Suppose that $G \hooklongrightarrow P   {\longrightarrow}  B$ is a principal $G$-bundle, 
and consider the following associated bundles: 
\begin{equation*} 
\begin{array}{ccccc} 
    & G/T     \hooklongrightarrow    & E_{1} := P \times_{G} (G/T)  
                  \overset{\pi_{1}}{\longrightarrow}   &  B,   \medskip \\
    & G/H   \hooklongrightarrow  &   E_{2} := P \times_{G} (G/H)  
                 \overset{\pi_{2}}{\longrightarrow}   &  B.   \medskip 
\end{array} 
\end{equation*} 
Then there is a fiber bundle $H/T \hooklongrightarrow E_{1}  \overset{\pi}{\longrightarrow}  E_{2}$, 
where the projection $\pi$ is induced from the natural projection (also denoted by the same symbol)
$\pi: G/T \longrightarrow G/H$,  
and  we have the following commutative diagram: 
\begin{equation}    \label{eqn:CD(Brumfiel-Madsen)}    
\begin{CD} 
      E_{1} = P \times_{G} (G/T)      @>{\pi}>>       E_{2} = P \times_{G} (G/H)  \\
        @V{\pi_{1}}VV                                              @VV{\pi_{2}}V  \\
      B                                      @>{=}>>                B.    
\end{CD} 
\end{equation} 
The usual right action of the Weyl group $W_{G}$ on $G/T$ induces a right  action on
$E_{1} = P \times_{G} (G/T)$ over $B$, 
i.e., a bundle map over $B$.   
As a subgroup of $W_{G}$, the Weyl group $W_{H}$ of $H$ also acts on $E_{1}$, which 
is a bundle map over $E_{2}$.   Therefore the coset $\overline{w} = wW_{H}  \in W_{G}/W_{H}$
defines a well-defined map $\pi \circ w:  E_{1}  \overset{w}{\longrightarrow} E_{1} 
\overset{\pi}{\longrightarrow}  E_{2}$, which induces a homomorphism in 
cohomology: $w \circ  \pi^{*}:  h^{*}(E_{2})  \overset{\pi^{*}}{\longrightarrow} h^{*}(E_{1})  \overset{w}{\longrightarrow}    h^{*}(E_{1})$.   
Then Brumfiel-Madsen  established the following useful formula:  
\begin{theorem} [Brumfiel-Madsen \cite{Brumfiel-Madsen1976}, Theorem 3.5; Bressler-Evens \cite{Bressler-Evens1990}, Theorem 1.3]  \label{thm:Brumfiel-MadsenFormula} 
In the above setting, we have 
\begin{equation*} 
       \pi_{1}^{*} \circ  \tau (\pi_{2})^{*}  =  \sum_{\overline{w}  \in W_{G}/W_{H}}  w \circ  \pi^{*}. 
\end{equation*} 
\end{theorem}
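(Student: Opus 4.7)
The plan is to deduce the formula directly by applying the base-change property of the Becker--Gottlieb transfer to a suitable fiber product, then decomposing the transfer by means of a $T$-equivariant stratification of $G/H$ and a vanishing argument for positive-dimensional torus orbits. First, I would form the pullback square
\begin{equation*}
\begin{CD}
E_1 \times_B E_2 @>{f}>> E_2 \\
@V{g}VV @VV{\pi_2}V \\
E_1 @>{\pi_1}>> B
\end{CD}
\end{equation*}
and invoke the naturality of the Becker--Gottlieb transfer under pullback (Becker--Gottlieb \cite[\S 5]{Becker-Gottlieb1975}) to obtain $\pi_1^{*}\circ\tau(\pi_2)^{*} = \tau(g)^{*}\circ f^{*}$. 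The fiber product admits the $G$-equivariant description $E_1 \times_B E_2 = P\times_G(G/T\times G/H) \cong P\times_T(G/H)$, arising from the $G$-equivariant isomorphism $G/T\times G/H\cong G\times_T(G/H)$ (with $T$ acting on $G/H$ by left multiplication), so that $g: P\times_T(G/H) \to P/T = E_1$ is the associated $G/H$-bundle with structure group $T$ acting on the fiber by left multiplication.

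Second, I would analyze $\tau(g)^{*}$ using the $T$-action on $G/H$. The $T$-fixed locus is $(G/H)^T = \{\dot w H: \overline w \in W_G/W_H\}$ (each such point is fixed since $\dot w\in N_G(T)$ and $T\subset H$, so $t\dot wH = \dot w(\dot w^{-1}t\dot w)H = \dot wH$), and each fixed point yields a section $\sigma_{\overline w}:E_1\to E_1\times_B E_2$, $[p]\mapsto[p,\dot wH]$. The remaining $T$-orbits on $G/H$ are positive-dimensional, each of the form $T/T_{\alpha}$ for a proper subtorus $T_\alpha\subset T$; the associated sub-bundles over $E_1$ have fibers that are positive-dimensional tori with stably trivial tangent bundle along the fibers, whose $h^{*}$-theory Euler class $\chi^{h}(T_\pi)$ therefore vanishes, and by formula (\ref{eqn:Becker-GottliebTransferGysinMap}) their contribution to the transfer is zero. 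This yields the decomposition
\begin{equation*}
\tau(g)^{*} \;=\; \sum_{\overline w\in W_G/W_H}\sigma_{\overline w}^{*}.
\end{equation*}

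Third, a direct check on representatives gives $f\circ\sigma_{\overline w} = \pi\circ w$ as maps $E_1\to E_2$ (indeed $[p]\mapsto[p,\dot wH]\mapsto[p\dot w]$, which agrees with applying the Weyl-group right-action $w$ to $E_1$ and then $\pi$), so $\sigma_{\overline w}^{*}\circ f^{*}=(\pi\circ w)^{*}=w^{*}\circ\pi^{*}$. Combining everything yields
\begin{equation*}
\pi_1^{*}\circ\tau(\pi_2)^{*} \;=\; \tau(g)^{*}\circ f^{*} \;=\; \sum_{\overline w\in W_G/W_H}w^{*}\circ\pi^{*},
\end{equation*}
the desired identity. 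The principal technical obstacle is justifying the decomposition $\tau(g)^{*}=\sum_{\overline w}\sigma_{\overline w}^{*}$ (or at the very least its application to pullback classes from $E_2$): this amounts to an additivity property of the Becker--Gottlieb transfer over the $T$-equivariant stratification of $G/H$, combined with the torus-fiber vanishing in any complex-oriented theory. This is essentially the content of Bressler--Evens \cite[\S 1]{Bressler-Evens1990} and rests on $h^{*}$-orientability of the torus-fiber bundles together with the vanishing of their Euler classes; everything else (the base-change identity, the orbit enumeration, and the identification $f\circ\sigma_{\overline w}=\pi\circ w$) is routine bookkeeping.
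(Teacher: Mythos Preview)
The paper does not supply its own proof of this theorem; it is quoted as a known result from Brumfiel--Madsen \cite[Theorem 3.5]{Brumfiel-Madsen1976} and Bressler--Evens \cite[Theorem 1.3]{Bressler-Evens1990}. So there is no in-paper argument to compare against.

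That said, your sketch is essentially the original Brumfiel--Madsen argument and is correct in outline. The base-change identity for the transfer, the identification $E_1\times_B E_2\cong P\times_T(G/H)$, the enumeration of $T$-fixed points of $G/H$ as $W_G/W_H$, and the verification $f\circ\sigma_{\overline w}=\pi\circ w$ are all accurate. One small correction: the stabilizer $T_x = T\cap xHx^{-1}$ of a non-fixed point need not be a subtorus on the nose (it can have finite components), but since $T$ is abelian the quotient $T/T_x$ is nonetheless a torus of positive dimension, so your Euler-class vanishing still applies.

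The one genuinely delicate point---which you correctly flag---is the passage from ``each positive-dimensional orbit has vanishing transfer'' to the global decomposition $\tau(g)^*=\sum_{\overline w}\sigma_{\overline w}^*$. Formula (\ref{eqn:Becker-GottliebTransferGysinMap}) alone does not give this: you need the additivity of the Becker--Gottlieb transfer over a $T$-equivariant filtration of $G/H$ (by orbit-type strata, or equivalently by a $T$-CW structure), so that the transfer splits as a sum over successive cofibers, and then the torus-fiber vanishing kills every term except those coming from the isolated fixed points. This additivity is a standard property of the transfer (see e.g.\ the cofibration axiom in Becker--Gottlieb or the treatment in Lewis--May--Steinberger), and is precisely what Brumfiel--Madsen invoke; your proposal would be complete once this is made explicit.
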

\noindent
As a special case where  $H = T$, we have 
\begin{cor} [Bressler-Evens \cite{Bressler-Evens1990}, Corollary 1.4]   \label{cor:Brumfiel-MadsenFormula}  
\begin{equation*} 
       \pi_{1}^{*}  \circ \tau (\pi_{1})^{*}   =   \sum_{w \in W_{G}}  w. 
\end{equation*} 
\end{cor}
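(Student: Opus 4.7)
The plan is to derive this corollary as an immediate specialization of Theorem \ref{thm:Brumfiel-MadsenFormula}, by taking $H = T$. Under this choice several pieces of data in the theorem degenerate simultaneously. First, the Weyl group $W_T$ of the torus is trivial, so the index set $W_G/W_H$ reduces to $W_G$ itself. Second, the homogeneous spaces $G/T$ and $G/H$ literally coincide, so the associated bundles satisfy $E_1 = E_2$ and the two projections to the base satisfy $\pi_1 = \pi_2$. Third, the natural projection $\pi: G/T \longrightarrow G/H$ is the identity map, hence the induced bundle map $\pi: E_1 \longrightarrow E_2$ in diagram \eqref{eqn:CD(Brumfiel-Madsen)} is also the identity, and consequently $\pi^* = \mathrm{id}_{h^*(E_1)}$.

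Substituting these identifications directly into the conclusion of Theorem \ref{thm:Brumfiel-MadsenFormula}, I would obtain
$$\pi_1^* \circ \tau(\pi_1)^* \;=\; \sum_{w \in W_G} w \circ \mathrm{id} \;=\; \sum_{w \in W_G} w,$$
which is precisely the statement of the corollary.

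Since the argument is a pure specialization of an already established theorem, there is essentially no mathematical obstacle; the main thing to verify is a bookkeeping point. Namely, one must confirm that the $W_G$-action appearing on the right-hand side of Theorem \ref{thm:Brumfiel-MadsenFormula}, where each coset $\overline{w} \in W_G/W_H$ acts on $h^*(E_1)$ via the well-defined endomorphism $w \circ \pi^*$, specializes once $E_1 = E_2$ and $\pi = \mathrm{id}$ to the standard $W_G$-action on $h^*(E_1)$ by bundle automorphisms over $B$ referred to in the statement of the corollary. This is immediate from the construction described just before Theorem \ref{thm:Brumfiel-MadsenFormula}: the right action of $W_G$ on $G/T$ is transported to a fiberwise action on $E_1 = P \times_G (G/T)$, and both occurrences of the action are induced from this same source.
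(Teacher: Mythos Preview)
Your proof is correct and takes essentially the same approach as the paper, which simply says ``As a special case where $H = T$, we have'' before stating the corollary. Your write-up just spells out the degenerations (trivial $W_T$, $E_1 = E_2$, $\pi_1 = \pi_2$, $\pi = \mathrm{id}$) that the paper leaves implicit.
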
 
We apply   Corollary  \ref{cor:Brumfiel-MadsenFormula} 
to the case where  $P = EG$, the universal space\footnote{
$EG$ is a contractible space on which $G$ acts freely.  
} for $G$,  so that $B = BG$, the classifying space of $G$,  and $E_{1}  =  EG  \times_{G} (G/T)  \simeq BT$. 
In this case, the fibration $G/T \hooklongrightarrow E_{1}  \overset{\pi_{1}}{\longrightarrow}  B$
becomes  the following classical Borel fibration: 
\begin{equation*}   
  G/T    \overset{\iota}{\hooklongrightarrow}   BT  \overset{\rho \, =  \, \rho (T, G)}{\longrightarrow}  BG. 
\end{equation*} 
Thus we have the following: 
\begin{equation}  \label{eqn:Brumfiel-MadsenFormula(BorelFibrationG/TBTBG)}   
     \rho^{*} \circ  \tau (\rho)^{*}  = \sum_{w \in W_{G}}  w. 
\end{equation} 
Combining (\ref{eqn:Becker-GottliebTransferGysinMap}) and (\ref{eqn:Brumfiel-MadsenFormula(BorelFibrationG/TBTBG)}), we have 
\begin{equation}    \label{eqn:Becker-Gottlieb-Brumfiel-Madsen(FullFlag)}  
      \rho^{*} \circ \rho_{*}(\chi^{h}  (T_{\rho}) \cdot f)  =   \sum_{w \in W_{G}} w \cdot f  \quad 
    \text{for} \; f  \in  h^{*}(BT). 
\end{equation} 
From this formula, Bressler-Evens \cite{Bressler-Evens1990} derived a 
useful formula which is explained briefly in the next subsection.

\subsection{Bressler-Evens formula}   \label{subsec:Bressler-EvensFormula}   
Before stating their result, we shall recall  some facts from Lie theory.      
Let $T \subset G$ be as above and $G/T  \overset{\iota}{\hooklongrightarrow}   BT \overset{\rho}{\longrightarrow} BG$
the Borel fibration.  Let $G_{\C}$  and $T_{\C}$ be the complexification of $G$ and 
$T$ respectively.  Thus $G_{\C}$ is  a connected complex (semi-simple) Lie group 
with maximal compact subgroup $G$, and $T_{\C}  \cong (\C^{*})^{\ell}$.   
 Denote by $B$ a Borel subgroup  of $G_{\C}$ containing $T_{\C}$. Then the natural inclusion $G \hookrightarrow 
G_{\C}$ induces a diffeomorphism $G/T  \overset{\sim}{\rightarrow} G_{\C}/B$. 
By this identification, the {\it full flag manifold}  $G/T$ is equipped with a complex structure.  
Let $\G$ and $\T$  be the Lie algebras of $G$ and $T$, and 
$\G_{\C} = \G \otimes_{\R}  \C$ and $\T_{\C} = \T \otimes_{\R}  \C$ their complexification. 
Then  $\G_{\C}$ and $\T_{\C}$  are the Lie algebras of $G_{\C}$ and $T_{\C}$
respectively.  
  Then we have the root space decomposition 
\begin{equation}  \label{eqn:RootSpaceDecompositionG}   
  \G_{\C}  =  \T_{\C}  \oplus 
                   \bigoplus_{\alpha \in \Delta^{+}}  (\G_{\alpha} \oplus \G_{-\alpha}), 
\end{equation} 
where $\G_{\alpha} = \{ x \in \G_{\C} \, | \, [h, x]   = \alpha (h) x \; (\forall h  \in \T_{\C}) \}$, 
and the system of {\it positive}  roots $\Delta^{+}  \subset \Hom_{\C}(\T_{\C}, \C)$ corresponds to the Lie algebra 
$\B$ of $B$. Thus $\B = \T_{\C} \oplus \bigoplus_{\alpha \in \Delta^{+}}  \G_{\alpha}$. 
We set $\Delta^{-} := - \Delta^{+}$, the system of {\it negative} roots, 
and $\Delta := \Delta^{+}  \sqcup  \Delta^{-}$, the system of roots.  
It is well-known that the tangent bundle $T(G/T)$ to  $G/T$ is
isomorphic to the vector bundle $G \times_{T}  (\G/\T)$ associated with the principal $T$-bundle
$T \hooklongrightarrow G \longrightarrow G/T$ and $T$-module $\G/\T$.  
Since we have the natural identification $\G/\T \cong \G_{\C}/\B$, we have the 
following isomorphism as {\it complex}  vector bundles: 
\begin{equation*} 
   T(G/T) \cong  G \times_{T}  (\G_{\C}/\B). 
\end{equation*} 
Hence the tangent bundle along the fibers $T_{\rho}$ is isomorphic to the {\it complex} vector bundle 
$ET \times_{T} (\G_{\C}/\B)$ associated with the universal $T$-bundle $T \hookrightarrow ET \rightarrow BT$.  Thus 
\begin{equation}   \label{eqn:TangentBundleAlongFibersT_rho}  
      T_{\rho}   \cong  ET \times_{T}  (\G_{\C}/\B). 
\end{equation}  
For each character $\chi  \in \Hom \, (T, U(1)) \cong \Hom \, (T_{\C}, \C^{*})  = \hat{T}_{\C}$,   
we have the associated complex 
line bundle $L_{\chi}$ over $BT$ defined by $L_{\chi} := ET  \times_{T}  \C 
=  (ET \times \C)/(y, v)  \sim (y \cdot t,  \chi(t)^{-1} v)$.   
Each root $\alpha  \in  \Delta \subset \Hom_{\C} (\T_{\C}, \C)$
defines a character 
$\chi_{\alpha}  \in \Hom \, (T_{\C}, \C^{*})$, and we have the associated complex 
line bundle $L_{\chi_{\alpha}}$, which is also denoted by $L_{\alpha}$ for simplicity.  
By (\ref{eqn:RootSpaceDecompositionG}) and (\ref{eqn:TangentBundleAlongFibersT_rho}), 
we have 
\begin{equation*} 
    T_{\rho}  \cong ET  \times_{T}  \bigoplus_{\alpha \in \Delta^{+}}  \G_{-\alpha}  
   \cong  \bigoplus_{\alpha  \in \Delta^{+}}   L_{-\alpha}.  
\end{equation*} 
Therefore the $h^{*}$-theory top Chern class (Euler class) of $T_{\rho}$ is given 
by 
\begin{equation}   \label{eqn:c^h_top(T_rho)}   
    c^{h}_{{\rm top}} (T_{\rho})   =  c^{h}_{{\rm top}}  \left  (\bigoplus_{\alpha \in \Delta^{+}}  L_{-\alpha} \right )
                              = \prod_{\alpha  \in \Delta^{+}}   c^{h}_{1}(L_{-\alpha}).  
\end{equation} 
From (\ref{eqn:c^h_top(T_rho)})  and the formula  (\ref{eqn:Becker-Gottlieb-Brumfiel-Madsen(FullFlag)}),  
Bressler-Evens deduced the following:\footnote{
For the ordinary cohomology theory $h = H$, this formula was already 
proved by Borel-Hirzebruch \cite[Theorem 20.3]{Borel-Hirzebruch1959}
(see also Tu \cite[\S 11.1]{Tu2015}).  
}
\begin{theorem}  [Bresser-Evens \cite{Bressler-Evens1990},  Theorem 1.8]   \label{thm:Bressler-EvensThm1.8} 
Let  $h^{*}(-)$ be a  complex-oriented generalized cohomology theory. We assume that 
the coefficient ring $h^{*}$ is torsion-free.\footnote{
We made this assumption for simplicity.  See Bressler-Evens \cite[Remark 1.10]{Bressler-Evens1992} for less restrictive assumptions. 
} 
Then for $f  \in h^{*}(BT)$,  we have 
\begin{equation*}  
        \rho^{*} \circ \rho_{*} (f)  =   \sum_{w \in W_{G}}   w \cdot 
                          \left [ \dfrac{f}{\prod_{\alpha \in \Delta^{+}} c_{1}^{h}   (L_{-\alpha})}     \right ] . 
\end{equation*} 
\end{theorem}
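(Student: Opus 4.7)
The plan is to combine the three ingredients already assembled in \S\ref{subsec:Brumfiel-Madsen}: the Becker--Gottlieb identity (\ref{eqn:Becker-GottliebTransferGysinMap}), the root--space factorization (\ref{eqn:c^h_top(T_rho)}) of the Euler class $c^h_{\mathrm{top}}(T_\rho)$, and the Brumfiel--Madsen specialization (\ref{eqn:Brumfiel-MadsenFormula(BorelFibrationG/TBTBG)}). Applying (\ref{eqn:Becker-GottliebTransferGysinMap}) with $\pi = \rho$ gives $\tau(\rho)^*(g) = \rho_*\bigl(c^h_{\mathrm{top}}(T_\rho) \cdot g\bigr)$ for every $g \in h^*(BT)$; combining this with (\ref{eqn:Brumfiel-MadsenFormula(BorelFibrationG/TBTBG)}) and rewriting the Euler class by means of (\ref{eqn:c^h_top(T_rho)}) produces the intermediate identity
\begin{equation*}
\rho^* \circ \rho_*\!\left(\prod_{\alpha \in \Delta^+} c^h_1(L_{-\alpha}) \cdot g\right) = \sum_{w \in W_G} w \cdot g, \qquad g \in h^*(BT),
\end{equation*}
which is nothing but (\ref{eqn:Becker-Gottlieb-Brumfiel-Madsen(FullFlag)}). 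The theorem will then follow by a formal substitution $g = f/\omega$ where $\omega := \prod_{\alpha \in \Delta^+} c^h_1(L_{-\alpha})$.

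To make this substitution rigorous, I would pass to a suitable localization and invoke the torsion--free hypothesis on $h^*$. Identify $h^*(BT) \cong h^*[[t_1,\ldots,t_\ell]]$, where the $t_i$ are first Chern classes attached to a $\Z$--basis of the character lattice $\wh{T}_\C$. Each $c^h_1(L_{-\alpha})$ is then a formal power series whose linear term is a nonzero integral combination of the $t_i$; under the torsion--free assumption on $h^*$ such a linear form is a non--zero--divisor in $h^*[[t_1,\ldots,t_\ell]]$, so $\omega$ is regular and the canonical map $h^*(BT) \hookrightarrow S^{-1} h^*(BT)$ into the localization at the multiplicative set $S$ generated by $\{c^h_1(L_{-\alpha})\}_{\alpha \in \Delta^+}$ is injective. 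The $h^*(BG)$--linear operator $\rho^* \circ \rho_*$ extends scalar--wise to $S^{-1} h^*(BT)$, and setting $g = f/\omega$ in the intermediate identity yields
\begin{equation*}
\rho^* \circ \rho_*(f) = \sum_{w \in W_G} w \cdot \left[\frac{f}{\prod_{\alpha \in \Delta^+} c^h_1(L_{-\alpha})}\right]
\end{equation*}
as an a priori equality in $S^{-1} h^*(BT)$. Since the left--hand side lies in the un--localized ring $h^*(BT)$, so does the right--hand side, and the claimed formula holds there.

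The main obstacle is the careful bookkeeping of this division. Each individual summand $w \cdot [f/\omega]$ acquires poles along the hyperplanes $w \cdot c^h_1(L_{-\alpha}) = 0$, and the substance of the theorem is that the Weyl symmetrization causes all these poles to cancel, producing an honest element of $h^*(BT)$; simultaneously one must check that $h^*(BT) \hookrightarrow S^{-1} h^*(BT)$ is injective so that identities descend from the localization. Both points rest on the torsion--free hypothesis on the coefficient ring $h^*$, which also explains the parenthetical remark in the statement about how the assumption may be weakened. Once the localization is in place, the proof reduces to the elementary substitution described above, together with the combined Becker--Gottlieb/Brumfiel--Madsen identity of the first paragraph.
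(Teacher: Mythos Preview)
Your approach is exactly the one the paper indicates: it simply records that the theorem is deduced by Bressler--Evens from (\ref{eqn:Becker-Gottlieb-Brumfiel-Madsen(FullFlag)}) together with the Euler class factorization (\ref{eqn:c^h_top(T_rho)}), and you have correctly identified the substitution $g = f/\omega$ as the bridge between the two formulas.

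One technical point deserves a little more care. You write that the $h^{*}(BG)$-linear operator $\rho^{*}\circ\rho_{*}$ ``extends scalar-wise'' to $S^{-1}h^{*}(BT)$, but the multiplicative set $S$ you chose lives in $h^{*}(BT)$, not in $h^{*}(BG)$, so $h^{*}(BG)$-linearity alone does not give such an extension. The standard fix is to pass to rational coefficients (where $\rho^{*}$ identifies $h^{*}_{\Q}(BG)$ with the $W_{G}$-invariants of $h^{*}_{\Q}(BT)$) and to localize instead at the $W_{G}$-invariant element $\prod_{\alpha\in\Delta} c_{1}^{h}(L_{\alpha})$, which then does lie in the image of $\rho^{*}$; both sides of (\ref{eqn:Becker-Gottlieb-Brumfiel-Madsen(FullFlag)}) are linear over this invariant subring and hence extend to the localization, after which your substitution goes through verbatim. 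The torsion-free hypothesis guarantees that the final identity, established in the localized rational ring, descends to $h^{*}(BT)$ itself. With this adjustment your argument is complete and matches the intended one.
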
 

One can easily extend Theorem \ref{thm:Bressler-EvensThm1.8}  to the case of 
{\it partial flag manifolds}.  Thus let  $\Theta \subset \Pi := \{ \text{simple roots} \}  
\subset \Delta^{+}$ be a subset of the set of simple roots,  
and $P = P_{\Theta}$ be the corresponding {\it parabolic subgroup},  and 
put  $H = H_{\Theta} :=  G  \cap  P$.  Then it is known that  $H$ is 
 the centralizer of the toral subgroup  defined by ``$\alpha  = 0$'' 
for $\forall \alpha \in \Theta$, and hence a closed  connected subgroup of $G$
of maximal rank, i.e., $T \subset H$.  
The homogeneous manifold  $G/H$ has a complex structure (see e.g., Borel-Hirzebruch  \cite[\S 13.5]{Borel-Hirzebruch1958}), and we have the following Borel fibration
\begin{equation*} 
 G/H    \overset{\iota_{H}}{\hooklongrightarrow}  BH  \overset{\sigma = \rho (H, G)}{\longrightarrow}  BG.   
\end{equation*}    
There exists a natural identification  
 $G/H  \overset{\sim}{\rightarrow} G_{\C}/P$, and this homogeneous manifold 
is called a partial flag manifold (see  e.g., Borel-Hirzebruch  \cite[\S 14.3]{Borel-Hirzebruch1958}).  
 Denote by $\Delta_{H}  = \Delta^{+}_{H}  \sqcup \Delta^{-}_{H}$
the system of roots of $H$ with respect to $T$.   The Lie algebra $\p = \p_{\Theta}$ of $P$ is given by 
\begin{equation}   \label{eqn:RootSpaceDecompositionP}  
    \p  =  \B  \oplus \bigoplus_{\alpha \in \Delta_{H}^{+}} \G_{-\alpha} 
         = \T_{\C} \oplus  \bigoplus_{\alpha  \in \Delta^{+}}  \G_{\alpha}  \oplus  \bigoplus_{\alpha \in \Delta_{H}^{+}} \G_{-\alpha}.  
\end{equation} 
The  tangent bundle of $G/H \overset{\sim}{\rightarrow}  G_{\C}/P$ is given by 
\begin{equation*} 
         T(G/H)  \cong G \times_{H}  (\G_{\C}/\p). 
\end{equation*} 
Hence the tangent bundle along the fibers $T_{\sigma}$ is isomorphic to 
the complex vector bundle $EH \times_{H} (\G_{\C}/\p)$ associated with 
the universal $H$-bundle $H  \hooklongrightarrow EH  \longrightarrow BH$.  
Thus 
\begin{equation}    \label{eqn:TangentBundleAlongFibersT_sigma}  
     T_{\sigma} \cong EH  \times_{H}   (\G_{\C}/\p). 
\end{equation}

We apply Theorem \ref{thm:Brumfiel-MadsenFormula} to the case where $P = EG$ and 
$H = H_{\Theta}$, 
so that $B = BG$, and $E_{2} =  EG \times_{G} (G/H) \simeq BH$. 
Then the fibration $G/H  \hooklongrightarrow E_{2}  \overset{\pi_{2}}{\longrightarrow}  B$
becomes the  Borel fibration
   $G/H    \overset{\iota}{\hooklongrightarrow}  BH  \overset{\sigma}{\longrightarrow}  BG$, 
and the commutative diagram (\ref{eqn:CD(Brumfiel-Madsen)}) yields the following commutative 
diagram: 
\begin{equation}    \label{eqn:CD(BorelFibration)}    
\begin{CD} 
        BT      @>{\pi  = \rho(T, H)}>>                     BH \\
        @V{\rho = \rho(T, G)}VV                                              @VV{\sigma = \rho(H, G)}V  \\
      BG                                      @>{=}>>                BG.    
\end{CD} 
\end{equation} 
Then by Theorem \ref{thm:Brumfiel-MadsenFormula}, we have 
\begin{equation}   \label{eqn:Brumfiel-MadsenFormula(BorelFibrationG/HBHBG)} 
     \rho^{*} \circ  \tau (\sigma)^{*}  =  \sum_{\overline{w} \in W_{G}/W_{H}}  w \circ \pi^{*}. 
\end{equation} 
Combining (\ref{eqn:Becker-GottliebTransferGysinMap}) and (\ref{eqn:Brumfiel-MadsenFormula(BorelFibrationG/HBHBG)}),  we have 
\begin{equation}     \label{eqn:Becker-Gottlieb-Brumfiel-Madsen(PartialFlag)}  
      \rho^{*} \circ \sigma_{*}  (c^{h}_{{\rm top}} (T_{\sigma}) \cdot f)  =  \sum_{\overline{w} \in W_{G}/W_{H}}  w \cdot \pi^{*} (f)  \quad \text{for} \; f   \in h^{*}(BH). 
\end{equation}   

On the other hand, pulling back the tangent bundle along the fibers $T_{\sigma}$ to $BT$  via tha map $\pi = \rho (T, H)$, 
we have from (\ref{eqn:TangentBundleAlongFibersT_sigma}),  (\ref{eqn:RootSpaceDecompositionG}), and (\ref{eqn:RootSpaceDecompositionP}),  
\begin{equation*}
     \pi^{*}  (T_{\sigma})   \cong ET \times_{T}   (\G_{\C}/\p)  \cong ET \times_{T}  \bigoplus_{\alpha \in \Delta^{+} \setminus \Delta_{H}^{+}}  \G_{-\alpha}  \cong  \bigoplus_{\alpha  \in \Delta^{+} \setminus \Delta_{H}^{+}} L_{-\alpha}. 
\end{equation*}  
Therefore  the $h^{*}$-theory top  Chern class (Euler class) of $\pi^{*}(T_{\sigma})$ is given by 
\begin{equation}  \label{eqn:c^h_top(T_sigma)}   
     \pi^{*}c^{h}_{{\rm top}}(T_{\sigma})  = c^{h}_{{\rm top}} \left (\bigoplus_{\alpha  \in \Delta^{+} \setminus \Delta_{H}^{+}}  L_{-\alpha}  \right )  = \prod_{\alpha \in  \Delta^{+} \setminus \Delta_{H}^{+}}  c^{h}_{1}(L_{-\alpha}). 
\end{equation} 
Then by the analogous argument to that of Bressler-Evens \cite[Theorem 1.8]{Bressler-Evens1990},   (\ref{eqn:c^h_top(T_sigma)}), and the formula (\ref{eqn:Becker-Gottlieb-Brumfiel-Madsen(PartialFlag)}),   one obtains the following 
(Note that by the commutativity of the diagram (\ref{eqn:CD(BorelFibration)}), 
one has $\rho^{*} \circ \sigma_{*}  =  \pi^{*} \circ \sigma^{*} \circ \sigma_{*}$): 
\begin{cor}    \label{cor:Bressler-EvensThm1.8Cor}  
For  $f  \in h^{*}(BH)$, we have 
\begin{equation} 
  \pi^{*} \circ   \sigma^{*} \circ \sigma_{*}  (f)  = \sum_{\overline{w} \in W_{G}/W_{H}} w \cdot 
\left [  
          \dfrac{\pi^{*}(f) }
                  {\prod_{\alpha \in \Delta^{+} \setminus \Delta^{+}_{H}}  c^{h}_{1} (L_{-\alpha})} 
\right ].  
\end{equation} 
\end{cor}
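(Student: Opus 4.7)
The plan is to parallel Bressler-Evens's derivation of Theorem \ref{thm:Bressler-EvensThm1.8} in the partial-flag setting, using formula (\ref{eqn:Becker-Gottlieb-Brumfiel-Madsen(PartialFlag)}) in place of its full-flag counterpart (\ref{eqn:Becker-Gottlieb-Brumfiel-Madsen(FullFlag)}). First, commutativity of the diagram (\ref{eqn:CD(BorelFibration)}) gives $\sigma \circ \pi = \rho$, hence $\pi^{*} \circ \sigma^{*} = \rho^{*}$, so the left-hand side of the corollary reduces to $\rho^{*}\sigma_{*}(f)$.

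Next, combining the Becker-Gottlieb identity (\ref{eqn:Becker-GottliebTransferGysinMap}) applied to $\sigma$ with the Brumfiel-Madsen formula (\ref{eqn:Brumfiel-MadsenFormula(BorelFibrationG/HBHBG)}) and the Chern-root decomposition (\ref{eqn:c^h_top(T_sigma)}) yields the identity
\begin{equation*}
\rho^{*}\sigma_{*}\bigl(c^{h}_{\mathrm{top}}(T_{\sigma})\cdot g\bigr)\;=\;\sum_{\overline{w}\in W_{G}/W_{H}} w\cdot\pi^{*}(g)\qquad (g\in h^{*}(BH)),
\end{equation*}
which is precisely (\ref{eqn:Becker-Gottlieb-Brumfiel-Madsen(PartialFlag)}). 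I would then formally substitute $g := f / c^{h}_{\mathrm{top}}(T_{\sigma})$ and invoke the ring-homomorphism relation $\pi^{*}(c^{h}_{\mathrm{top}}(T_{\sigma})) = \prod_{\alpha\in\Delta^{+}\setminus\Delta_{H}^{+}} c_{1}^{h}(L_{-\alpha})$ coming from (\ref{eqn:c^h_top(T_sigma)}) to read off the claimed equality.

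The main obstacle is justifying this formal division, since $f/c^{h}_{\mathrm{top}}(T_{\sigma})$ is generally not an element of $h^{*}(BH)$. Under the torsion-free hypothesis on $h^{*}$, one has the identifications $h^{*}(BH) \cong h^{*}(BT)^{W_{H}}$ and $h^{*}(BG) \cong h^{*}(BT)^{W_{G}}$ inside the formal power series ring $h^{*}(BT) \cong h^{*}[[t_{1},\ldots,t_{\ell}]]$, and each $c^{h}_{1}(L_{-\alpha})$ has leading term $-\alpha$, so $\pi^{*}(c^{h}_{\mathrm{top}}(T_{\sigma}))$ is a non-zero-divisor there. I would therefore work in the localization of $h^{*}(BT)$ obtained by inverting the $W_{G}$-invariant element $D := \prod_{\overline{w}\in W_{G}/W_{H}} w\cdot \pi^{*}(c^{h}_{\mathrm{top}}(T_{\sigma}))$; since both sides of (\ref{eqn:Becker-Gottlieb-Brumfiel-Madsen(PartialFlag)}) are $h^{*}(BT)^{W_{G}}$-linear in $g$ (the left by the projection formula, the right because $\rho^{*}(h^{*}(BG))$ is $W_{G}$-fixed), the identity extends unambiguously to the $D$-localized module and the substitution $g = f/c^{h}_{\mathrm{top}}(T_{\sigma})$ becomes legitimate. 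Since the resulting left-hand side $\rho^{*}\sigma_{*}(f)$ already lies in the unlocalized ring $h^{*}(BT)^{W_{G}}$, the seemingly rational right-hand side is automatically a genuine element of $h^{*}(BT)$, and the equality holds there as claimed.
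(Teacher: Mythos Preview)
Your proposal is correct and follows essentially the same route as the paper: the paper derives the corollary by combining the commutativity $\rho^{*}\circ\sigma_{*}=\pi^{*}\circ\sigma^{*}\circ\sigma_{*}$ with formula (\ref{eqn:Becker-Gottlieb-Brumfiel-Madsen(PartialFlag)}) and the Chern-class identity (\ref{eqn:c^h_top(T_sigma)}), invoking ``the analogous argument to that of Bressler-Evens \cite[Theorem 1.8]{Bressler-Evens1990}'' for the formal division step. You have simply made that analogous argument explicit, including a justification of the division via localization that the paper leaves implicit.
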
 

\subsection{Various Gysin formulas}   \label{subsec:VariousGysinFormulas}  
As mentioned in the introduction,  various  types of  {\it Gysin formulas}  
related to the Gysin maps 
are known (see e.g.,  Akyildiz \cite{Akyildiz1984}, 
Akyildiz-Carrell  \cite{Akyildiz-Carrell1987},  
Buch \cite{Buch2002(Duke)},  
Damon \cite{Damon1973}, \cite{Damon1974},  Darondeau-Pragacz \cite{Darondeau-Pragacz2015},   Fel'dman \cite{Fel'dman2003}, 
Fulton \cite{Fulton1998},  
Fulton-Pragacz \cite{Fulton-Pragacz1998},  Harris-Tu \cite{Harris-Tu1984}, 
Ilori \cite{Ilori1978},  
Jozefiak-Lascoux-Pragacz \cite{JLP1982},  Kajimoto \cite{Kajimoto1997},   Quillen \cite{Quillen1969},  Pragacz \cite{Pragacz1988}, \cite{Pragacz1996},  \cite{Pragacz2015},  Sugawara \cite{Sugawara1988},  Tu \cite{Tu2010}, \cite{Tu2015}). 
In this subsection, we shall take up  typical examples of these formulas.

\subsubsection{Gysin formulas of type ``$G_{\C}/B  \longrightarrow G_{\C}/P$''} 
First recall the result due to Akyildiz-Carrell \cite{Akyildiz-Carrell1987}. 
In order to state their result, we shall use the same notation as in \S \ref{subsec:Bressler-EvensFormula} with  a slightly minor change.  So Let $G_{\C}  \supset B  \supset T_{\C}$  be as in \S \ref{subsec:Bressler-EvensFormula}.   Consider  the  parabolic  subgroup $P = P_{\Theta}$  corresponding 
to a subset $\Theta \subset \Pi = \{ \text{simple roots} \}
        \subset \Delta^{+}$.     Thus the homogeneous variety 
$G_{\C}/P$ is a partial  flag variety. 
Denote by $W_{\Theta}$ (resp. $\Delta_{\Theta}$)
the  Weyl group (resp. root system)   corresponding  to $P_{\Theta}$.  
Let  $\chi  \in \hat{T}_{\C} = \Hom \, (T_{\C}, \C^{*})$ be a character.  
By composing the natural projection\footnote{
Recall that $B$ is the semi-direct product of $T_{\C}$ and its unipotent part.  
} $B   \twoheadlongrightarrow T_{\C}$ with 
$\chi:  T_{\C}  \longrightarrow \C^{*}$,   we have a character 
$\chi_{B} = \chi:  B  \longrightarrow \C^{*}$.  
Then one can define a complex line bundle $M_{\chi}$ over $G_{\C}/B$ 
in the usual manner.  
By assigning each character $\chi \in \hat{T}_{\C}$
the first Chern class $c_{1}(M_{\chi})  \in H^{2}(G_{\C}/B ; \C)$, 
the {\it characteristic homomorphism}\footnote{
In topology, it is customary that the character group $\hat{T}_{\C} \cong \Hom \, (T, U(1))$
is identified with $H^{1}(T) \cong H^{2}(BT)$ (this latter identification is given by the 
{\it negative transgression} (see Borel-Hirzebruch \cite[\S 10.1]{Borel-Hirzebruch1958})). 
Under this identification, one has the isomorphism $\mathrm{Sym} \, (\hat{T}_{\C}) \cong 
H^{*}(BT;\C)$ as algebras, and the characteristic homomorphism 
$c$ can be identified with the induced homomorphism $\iota^{*}:  H^{*}(BT;\C)  \longrightarrow H^{*}(G/T; \C)$.  
}  
\begin{equation*} 
               c:  R := \mathrm{Sym} \, (\hat{T}_{\C})  \rightarrow H^{*}(G_{\C}/B; \C)       
\end{equation*}
is defined.  Here $\mathrm{Sym} \, (\hat{T}_{\C})$ means the symmetric algebra 
of  $\hat{T}_{\C}$ over $\C$.  
Let  $\pi: G_{\C}/B  \longrightarrow G_{\C}/P$  be the  natural projection.  
Then Akyildiz-Carrell showed the following formula (see also Brion \cite{Brion1996}): 
\begin{theorem}[Akyildiz-Carrell  
\cite{Akyildiz-Carrell1987}, Theorem 1; Brion \cite{Brion1996}, Proposition 1.1]     \label{thm:Akyildiz-Carrell}  
The Gysin homomorphism  
    $\pi_{*}:  H^{*}(G_{\C}/B; \C)  \longrightarrow H^{*}(G_{\C}/P; \C)$   
is  given by   
\begin{equation}  \label{eqn:Akyildiz-CarrellFormula}   
     \pi^{*} \circ  \pi_{*}  \, (c(f)) =  c \left ( \sum_{w \in W_{\Theta}}  \dfrac{\det (w) \; w \cdot f }{ \prod_{\alpha  \in \Delta^{+}_{\Theta}} \alpha } \right )   \quad \text{for}  \; f \in R.   
\end{equation} 
\end{theorem}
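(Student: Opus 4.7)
The plan is to descend the formula from the classifying-space model $\rho(T,H):BT \to BH$, where the Bressler--Evens machinery of \S\ref{subsec:Bressler-EvensFormula} applies directly, to the flag fibration $\pi: G_{\C}/B \to G_{\C}/P$ via the characteristic homomorphism $c$.

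First, I would apply Theorem \ref{thm:Bressler-EvensThm1.8} with $G$ replaced by $H = H_\Theta$ (viewed as a compact connected Lie group with maximal torus $T$, positive roots $\Delta_\Theta^+$, and Weyl group $W_\Theta$) to the Borel fibration $H/T \hookrightarrow BT \xrightarrow{\rho(T,H)} BH$ in ordinary $\C$-cohomology. This produces
\begin{equation*}
\rho(T,H)^* \circ \rho(T,H)_*(f) \;=\; \sum_{w \in W_\Theta} w \cdot \left[\frac{f}{\prod_{\alpha \in \Delta_\Theta^+} c_1^H(L_{-\alpha})}\right] \qquad (f \in H^*(BT;\C)).
\end{equation*}

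Second, I would transfer this identity to $G_\C/B \to G_\C/P$ using the square
\begin{equation*}
\begin{CD}
G_{\C}/B @>{\iota_T}>> BT \\
@V{\pi}VV @VV{\rho(T,H)}V \\
G_{\C}/P @>{\iota_H}>> BH,
\end{CD}
\end{equation*}
where $\iota_T$ and $\iota_H$ are the inclusions of fibers in the Borel fibrations $G/T \hookrightarrow BT \to BG$ and $G/H \hookrightarrow BH \to BG$. Using the models $BT = EG/T$ and $BH = EG/H$ with $\iota_T(gT) = y_0 gT$, $\iota_H(gH) = y_0 gH$ for a fixed basepoint $y_0 \in EG$, the correspondence $(gH, y_0 ghT) \leftrightarrow ghT$ exhibits a canonical isomorphism $G_\C/P \times_{BH} BT \cong G_\C/B$, so the square is Cartesian. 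By the base-change property of Gysin maps one has $\iota_H^* \circ \rho(T,H)_* = \pi_* \circ \iota_T^*$, and $\iota_T^*$ agrees with $c$ because both send $\chi \in \hat T_\C$ to the first Chern class of the associated line bundle on $G_\C/B$. Applying $\iota_T^*$ to both sides of the first-step identity therefore yields
\begin{equation*}
\pi^* \circ \pi_*(c(f)) \;=\; c\!\left(\sum_{w \in W_\Theta} w \cdot \left[\frac{f}{\prod_{\alpha \in \Delta_\Theta^+} c_1^H(L_{-\alpha})}\right]\right).
\end{equation*}

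Finally, the right-hand side reduces to the desired form by pure algebra: under the negative-transgression identification of the footnote, $c_1^H(L_{-\alpha})$ corresponds to $\alpha \in \hat T_\C$, and combined with the classical alternating identity $w \cdot \prod_{\alpha \in \Delta_\Theta^+}\alpha = \det(w)\prod_{\alpha \in \Delta_\Theta^+}\alpha$ for $w \in W_\Theta$, each summand $w \cdot (f/\prod \alpha)$ becomes $\det(w)(w \cdot f)/\prod \alpha$, yielding exactly (\ref{eqn:Akyildiz-CarrellFormula}). The main technical obstacle is the verification of Cartesian-ness of the fiber-inclusion square together with the attendant base-change identity (requiring a careful unwinding of the Borel models), and a secondary bookkeeping concern is calibrating the sign of the transgression so that $c_1^H(L_{-\alpha}) \leftrightarrow +\alpha$, thereby avoiding an extraneous $(-1)^{|\Delta_\Theta^+|}$ factor.
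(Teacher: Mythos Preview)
Your proposal is correct and follows essentially the same approach as the paper's own derivation in \S\ref{subsubsec:ApplicationBressler-EvensFormula}: apply Theorem~\ref{thm:Bressler-EvensThm1.8} with $G$ replaced by $H = H_\Theta$, then transfer from $\rho(T,H): BT \to BH$ to $\pi: G_\C/B \to G_\C/P$ via the fiber-inclusion square and the base-change property of Gysin maps, and finally identify $c_1(L_{-\alpha})$ with $\alpha$. The paper's computation is slightly terser (it does not spell out the Cartesian verification or the sign-bookkeeping that you flag as secondary concerns), but the two arguments are the same in substance.
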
  
\noindent
Here $\det \, (w)$ means $(-1)^{\ell (w)}$, where $\ell (w)$ denotes the 
length of the Weyl group element $w$.  
Since $w \cdot \prod_{\alpha \in \Delta_{\Theta}^{+}}     \alpha = (-1)^{\ell (w)} \prod_{\alpha \in \Delta_{\Theta}^{+} }  \alpha$ for any $w \in W_{\Theta}$, 
the above formula (\ref{eqn:Akyildiz-CarrellFormula})  can also be written as follows: 
\begin{equation*} 
        \pi^{*} \circ  \pi_{*}  \, (c(f)) 
    =  c \left ( \sum_{w \in W_{\Theta}}  
                      w \cdot \left   [   \dfrac{f}{ \prod_{\alpha  \in \Delta^{+}_{\Theta}} \alpha }
                                  \right ]  
            \right )   \quad \text{for}  \; f \in R. 
\end{equation*} 
Akyildiz-Carrell   proved  this  formula by the method based on the zeros of holomorphic 
vector fields on  relevant flag varieties.   Brion  proved this  formula by the Weyl character formula and Grothendieck-Riemann-Roch theorem.  

\subsubsection{Gysin formulas of type  ``$\mathcal{F}  \ell (E)  \longrightarrow X$''} \label{subsubsec:GysinFormulasFl(E)->X} 
Next recall the result due to Fulton-Pragacz \cite[Chapter IV]{Fulton-Pragacz1998}. 
Let $E  \overset{p}{\longrightarrow}   X$ be a  complex vector bundle of rank $n$ over 
a {\it variety}.\footnote{
Actually it is enough to assume that the base space is some  {\it nice  space}, 
say, a {\it paracompact space},  so that  
the classification theorem of vector bundles holds.    
} 
Denote by $\tau = \tau_{E}: \mathcal{F} \ell (E)  \longrightarrow X$ the 
associated flag bundle parametrizing   successive   flags of   {\it quotients} 
 of $E$ of  ranks $n-1, \ldots, 2, 1$.   Thus we have the {\it tautological 
sequence of flag of quotient bundles}  
\begin{equation*} 
           \tau^{*}E = Q^{n} \twoheadlongrightarrow Q^{n-1} \twoheadlongrightarrow \cdots \twoheadlongrightarrow Q^{2} \twoheadlongrightarrow Q^{1} \twoheadlongrightarrow Q^{0} = 0, 
\end{equation*} 
where $\rank \, (Q^{i}) = i  \; (i = 0, 1, 2, \ldots, n)$.\footnote{
Here we followed the convention as in e.g., Pragacz \cite[\S 2]{Pragacz1988}. 
On $\F \ell (E)$, we also have the {\it tautological sequence of flag of subbundles} 
\begin{equation*} 
    0 = S_{0} \subset S_{1} \subset S_{2} \subset \cdots \subset S_{n-1} \subset S_{n} = \tau^{*}E, 
\end{equation*} 
where $\rank \, (S_{i})  = i \; (i = 0, 1, 2, \ldots, n)$.  These two tautological sequences 
are related by $Q^{i}  = \tau^{*}E/S_{n-i}$. Therefore if we define the line bundles 
$L_{i} := S_{i}/S_{i-1} \; (i = 1, 2, \ldots, n)$, then we have $L^{i} = L_{n+1-i}
\; (i = 1, 2, \ldots, n)$. 
}   
Define the line bundles 
 $L^{i} := \Ker \,  (Q^{i} \twoheadlongrightarrow Q^{i-1}) \; (i = 1, 2, \ldots, n)$ 
 over  $\F \ell (E)$.  
Put     $x_{i} := c_{1}(L^{i})  \in H^{2}(\F \ell (E)) \; (i = 1, 2, \ldots, n)$ (the  {\it Chern roots}  of $E$).  
Then Fulton-Pragacz showed the following formula:  
\begin{theorem}[Pragacz \cite{Pragacz1988}, Lemma 2.4; \cite{Pragacz1996}, Proposition 4.3 (ii); 
Fulton-Pragacz  \cite{Fulton-Pragacz1998}, p.41]     \label{thm:Fulton-Pragacztau^*tau_*}  
For a polynomial $f(X_{1}, \ldots, X_{n})     \in H^{*}(X)[X_{1}, \ldots, X_{n}]$, we have
\begin{equation*} 
      \tau^{*} \circ \tau_{*}(f(x_{1}, \ldots, x_{n}))  
      =  \sum_{w \in S_{n}} 
                 w \cdot \left [   \dfrac{f(x_{1}, \ldots, x_{n})} 
                                                { \prod_{1 \leq i < j \leq n}  (x_{i} - x_{j})} 
                            \right ].    
\end{equation*}  
\end{theorem}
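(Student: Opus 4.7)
My plan is to derive Theorem \ref{thm:Fulton-Pragacztau^*tau_*} as the specialization of the Bressler-Evens formula (Theorem \ref{thm:Bressler-EvensThm1.8}) to ordinary cohomology $H^*(-)$ and the unitary group $G = U(n)$, after first reducing to the universal flag bundle by a classifying-space/base-change argument.

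First I reduce to the universal case. Let $f_E : X \longrightarrow BU(n)$ classify $E$, so that $\F \ell (E) \cong X \times_{BU(n)} BT$ with $T = (U(1))^n$, and $\tau$ is the pullback of the universal Borel fibration $\rho = \rho(T, U(n)) : BT \longrightarrow BU(n)$. Denote by $\tilde{f} : \F \ell (E) \longrightarrow BT$ the induced map. By construction the Chern roots $x_i = c_1(L^i)$ are the pullbacks $\tilde{f}^{*} \xi_i$ of the universal classes $\xi_i = c_1(L_{\chi_i}) \in H^{2}(BT)$ corresponding to the standard characters $\chi_i$ of $T$. Using the base-change axiom of Gysin maps ($f_E^{*} \circ \rho_{*} = \tau_{*} \circ \tilde{f}^{*}$) combined with the projection formula to absorb the coefficients of $f$ that lie in $H^{*}(X)$, the identity to be proved reduces to the corresponding universal identity
\begin{equation*}
 \rho^{*} \circ \rho_{*}(g(\xi_1, \ldots, \xi_n)) = \sum_{w \in S_n} w \cdot \left[ \frac{g(\xi_1, \ldots, \xi_n)}{\prod_{1 \leq i < j \leq n}(\xi_i - \xi_j)} \right] \quad \text{in } H^{*}(BT).
\end{equation*}

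Next I apply Theorem \ref{thm:Bressler-EvensThm1.8} with $G = U(n)$ and its standard maximal torus $T$. The Weyl group is $W_{U(n)} = S_n$, and the torsion-freeness hypothesis is automatic since $H^{*}(BU(n); \Z) = \Z[c_1, \ldots, c_n]$. Under the standard identification $\hat{T}_{\C} \cong H^{2}(BT)$, the roots are $\{\xi_i - \xi_j : i \neq j\}$; choosing positive roots $\Delta^{+} = \{\xi_j - \xi_i : i < j\}$, one has $-\alpha = \xi_i - \xi_j$ for each $\alpha = \xi_j - \xi_i \in \Delta^{+}$. Because $H^{*}(-)$ carries the additive formal group law (Example \ref{ex:FGL}(1)), $c_1^{H}(L_{-\alpha}) = -\alpha = \xi_i - \xi_j$. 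Substituting into Theorem \ref{thm:Bressler-EvensThm1.8} yields exactly the displayed formula, and pulling back along $\tilde{f}^{*}$ delivers the statement of the theorem.

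The only genuine technical point is bookkeeping: identifying the line bundles $L^i$ on $\F \ell (E)$ with pullbacks of the tautological bundles on $BT$, fixing the Weyl-group action so that it matches permutation of the $x_i$, and selecting positive roots so that the denominator comes out as the Vandermonde $\prod_{i < j}(x_i - x_j)$ rather than its sign-transformed variant. All of this is routine once Theorem \ref{thm:Bressler-EvensThm1.8} is available, so the main obstacle is really just notational. As an alternative, one could proceed by induction on $n$, factoring $\tau$ through the tower of intermediate projective bundles and using the classical projective bundle Gysin formula (i.e.\ the specialization of (\ref{eqn:GysinFormulaProjectivBundleII}) to the additive formal group law) as the base case; but the Bressler-Evens route is the most economical given the machinery already assembled in the excerpt.
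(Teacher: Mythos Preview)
Your proposal is correct and follows essentially the same route as the paper's own proof in \S\ref{subsubsec:ApplicationBressler-EvensFormula}: reduce to the universal bundle via the classifying map and base-change, then apply the Bressler-Evens formula (Theorem \ref{thm:Bressler-EvensThm1.8}) with $G = U(n)$ and $W_{G} = S_n$. The only cosmetic difference is that the paper uses the convention $y_i := -c_1(L_{\chi_i})$ and records explicitly the index reversal $x_i = \tilde{h}^{*}(-y_{n+1-i})$ coming from the identification $L^{i} = L_{n+1-i}$; this is precisely the ``bookkeeping'' you flag but do not spell out. Your closing remark about the inductive alternative also matches the paper, which notes that Fulton--Pragacz's original argument proceeds by induction on $\rank E$ through the tower of projective bundles.
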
 
\noindent 
Thus the Gysin map $\tau_{*}$  is  given by  a   certain {\it symmetrizing operator} called 
the {\it Jacobi symmetrizer}  in Fulton-Pragacz \cite[\S 4.1]{Fulton-Pragacz1998}.  
As is well-known, the flag bundle $\F \ell (E)$ can be constructed as a sequence of 
projective bundles, and  Fulton-Pragacz proved this formula by the induction on 
the rank of $E$.

\subsubsection{Application of the Bressler-Evens formula}    \label{subsubsec:ApplicationBressler-EvensFormula}   
Most of these Gysin formulas are formulated in the ordinary cohomology rings
or Chow rings, and proved by many different ways.  
We remark that  the Bressler-Evens formulas (Theorem \ref{thm:Bressler-EvensThm1.8} and Corollary  $\ref{cor:Bressler-EvensThm1.8Cor}$) enable us to show these Gysin formulas  by a unified manner.   

For Theorem \ref{thm:Akyildiz-Carrell},  one can argue as follows: 
Put $H   =  G \cap P$ as in \S \ref{subsec:Bressler-EvensFormula}.  
Then by the {\it classification theorem of principal bundles}, we have 
a {\it classifying map}  $h: G/H  \longrightarrow BH$ and its lift 
$\tilde{h}:  G/T \longrightarrow BT$,  and the following 
diagram is commutative:     
  \begin{equation*} 
 \begin{CD} 
             G_{\C}/B \cong G/T    @>{\tilde{h} \simeq \iota}>>       BT  \\
                @V{\pi}VV                                      @VV{\rho = \rho (T, H)}V \\
             G_{\C}/P \cong G/H     @>{h \simeq \iota_{H}}>>               BH.  \\
\end{CD} 
\end{equation*} 
Note that the above classifying map $h$ (resp. $\tilde{h}$) coincides with  
the fiber inclusion $\iota_{H}: G/H  \hooklongrightarrow BH$ (resp. $\iota: G/T \hooklongrightarrow BT$) up to homotopy.   
Then by Theorem \ref{thm:Bressler-EvensThm1.8} and the base-change property of 
Gysin maps,   we compute 
\begin{equation*} 
\begin{array}{lllll} 
    \pi^{*} \circ \pi_{*} \,  (c(f)) &
     =  \pi^{*} \circ \pi_{*} \circ \iota^{*} (f)     
    = \pi^{*} \circ \iota_{H}^{*} \circ \rho_{*}  (f)   
   = \iota^{*} \circ \rho^{*} \circ \rho_{*} (f)  \medskip \\
                                     & = \iota^{*} \left (   
                                                             \displaystyle{\sum_{w \in W_{\Theta}}}   
                                                                 w \cdot \left [ 
                                                                                   \dfrac{f}{\prod_{\alpha \in \Delta_{\Theta}^{+}}   c_{1}(L_{-\alpha}) }   
                                                                             \right ] 
                                                       \right )     \medskip   \\
                                        & =  c \left ( 
                                                        \displaystyle{\sum_{w \in W_{\Theta}}} 
                                                                   w \cdot \left [ 
                                                                                    \dfrac{f}{\prod_{\alpha \in \Delta_{\Theta}^{+}} \alpha }  
                                                                               \right ] 
                                                  \right ),    
\end{array} 
\end{equation*} 
as required.   Here we used the convention that $c_{1}(L_{\alpha}) = -\alpha$
for a root $\alpha \in \Delta$.

For Theorem \ref{thm:Fulton-Pragacztau^*tau_*}, one can  argue as follows:  
 By the {\it classification theorem of complex vector bundles}, we have the {\it classifying map} 
        $h: X  \longrightarrow BU(n)$, 
and its lift $\tilde{h}:  \F \ell (E)  \longrightarrow BT^{n}$, 
and the following  diagram is commutative:     
 \begin{equation*} 
 \begin{CD} 
          \mathcal{F} \ell (E)   @>{\tilde{h}}>>       BT^{n}  \\
                @V{\tau}VV                                 @VV{\rho}V \\
            X                        @>{h}>>                 BU(n).  \\
\end{CD} 
\end{equation*} 
Let $\chi_{i}: T^{n} \longrightarrow  U(1)$ be the character which takes an element 
$t  = \mathrm{diag} \, (t_{1},  \ldots, t_{n})  \in T^{n}$ to the $i$-th entry 
$t_{i} \in U(1) \; (i = 1, 2, \ldots, n)$.  The line bundles $L_{\chi_{i}}$ over $BT^{n}$ 
can be constructed as in \S \ref{subsec:Bressler-EvensFormula}.  
Put $y_{i} := -c_{1}(L_{\chi_{i}})  = c_{1}((L_{\chi_{i}})^{\vee})  \in H^{2}(BT^{n}) \; (i  = 1, 2, \ldots, n)$ 
(notice our convention). 
Then the positive root system for  $G= U(n)$ is given by 
$\Delta^{+}   = \{ y_{i} - y_{j} \; (1 \leq i < j \leq n) \}$ as a subset of 
$H^{2}(BT^{n})$.   The Weyl group  $W_{U(n)}$ of $U(n)$ can be identified 
with the symmetric group $S_{n}$ by the usual manner.  
Let $\gamma^{n} \longrightarrow BU(n)$ be the {\it universal} or {\it canonical} 
vector bundle over $BU(n)$ (see Milnor-Stasheff \cite[\S 14, p.161]{Milnor-Stasheff1974}). 
Then the associated flag bundle $\F \ell (\gamma^{n}) \longrightarrow BU(n)$ can be 
identified with the Borel fibration $BT^{n}   \overset{\rho}{\longrightarrow} BU(n)$. 
As noted in \S \ref{subsubsec:GysinFormulasFl(E)->X},  there is the tautological 
sequence of flag of subbundles $S_{0}  \subset S_{1} \subset S_{2} \subset \cdots \subset S_{n-1} \subset 
S_{n} = \rho^{*}(\gamma^{n})$ over $BT^{n}$.   The usual line bundles $L_{i} \; (i = 1, 2, \ldots, n)$ over $BT^{n}$ are defined by $L_{i} := S_{i}/S_{i-1}
\; (i = 1, 2, \ldots, n)$.   
Then it is easily verified that the line bundle $L_{i}$  
can be identified with the line bundle $L_{\chi_{i}}$.  
Therefore as for the Chern roots of $E$, we have 
\begin{equation*} 
    x_{i} = c_{1}(L^{i})   =  c_{1}(L_{n + 1 -i})  =  c_{1}(\tilde{h}^{*}(L_{n+1-i})) 
          = \tilde{h}^{*} (c_{1}(L_{n + 1 - i}))   = \tilde{h}^{*}(-y_{n+1-i}). 
\end{equation*} 
Then by Theorem \ref{thm:Bressler-EvensThm1.8} and the base-change property of Gysin maps,  we compute\footnote{
We also used the well-known fact that the map $\tilde{h}^{*}:   H^{*}(BT^{n}) \twoheadlongrightarrow 
H^{*}(\F \ell (E))$ is surjective.  
}
\begin{equation*} 
\begin{array}{llll} 
   \tau^{*} \circ \tau_{*} (f(x_{1}, \ldots, x_{n}))  & =  \tau^{*} \circ \tau_{*} \circ \tilde{h}^{*} (f(-y_{n}, \ldots, -y_{1}))  \medskip \\
      & =        \tilde{h}^{*} \circ \rho^{*} \circ \rho_{*} (f(-y_{n}, \ldots, -y_{1}))   \medskip \\
     & =  \tilde{h}^{*} \left (   
                                         \displaystyle{\sum_{w \in W_{U(n)}}} 
                                                   w \cdot \left [  
                                                                         \dfrac{ f(-y_{n}, \ldots, -y_{1})} 
                                                                                {\prod_{\alpha \in \Delta^{+}}  c_{1}(L_{-\alpha}) } 
                                                              \right ]  
                            \right )   \medskip   \\
    & =   \tilde{h}^{*} \left (   
                                         \displaystyle{\sum_{w \in S_{n}}} 
                                                   w \cdot \left [  
                                                                         \dfrac{ f(-y_{n}, \ldots, -y_{1})} 
                                                                                {\prod_{1 \leq i <  j \leq n} 
                                                                                     (y_{i} - y_{j})   } 
                                                              \right ]  
                            \right )   \medskip   \\
    & =   \displaystyle{\sum_{w \in S_{n}}} 
                                                   w \cdot \left [  
                                                                         \dfrac{ f(x_{1}, \ldots, x_{n})} 
                                                                                {\prod_{1 \leq i <  j \leq n} 
                                                                                     (x_{i} - x_{j})   } 
                                                              \right ],  
\end{array} 
\end{equation*} 
as required.  

\subsubsection{Thom-Porteous formula}   \label{subsubsec:Thom-PorteousFormula}  
Finally, we briefly review the {\it Thom-Porteous formula} (see Porteous \cite[p.298, Proposition 1.3]{Porteous1971})  as an application of Gysin formulas.   Here we adopt the 
formulation as in  Fulton \cite[\S 14.4]{Fulton1998}, 
Fulton-Pragacz \cite[\S 2.1]{Fulton-Pragacz1998},  
Pragacz \cite{Pragacz1988}, \cite{Pragacz1996}  which is slightly different from Porteous' original one.   
Let $E  \overset{p_{E}}{\longrightarrow}  X$ 
and $F \overset{p_{F}}{\longrightarrow} X$ be complex vector bundles of ranks 
$e$ and $f$ on a variety $X$.  Let $\varphi: E \longrightarrow F$ be a vector 
bundle homomorphism.  For each point $x$, denote by $\varphi_{x}: 
E_{x} = p_{E}^{-1}(x) \longrightarrow F_{x}  =p_{F}^{-1}(x)$ the linear map 
on the fiber.   Then we set 
\begin{equation*} 
  D_{r} (\varphi) :=  \{  x  \in X \;  | \; \rank \varphi_{x} \leq r \}   \subset X, 
\end{equation*} 
which is called the $r$th {\it degeneracy locus}  of $\varphi$ ($r = 0, 1, \ldots, \mathrm{min} \, (e, f)$).   It is known that if the map $\varphi$ is {\it sufficiently generic}, 
the subvariety $D_{r}(\varphi)$ has codimension $(e-r) (f-r)$, and defines 
a cohomology class $[D_{r}(\varphi)]   \in H^{2(e-r)(f-r)} (X)$.  
Then Thom  \cite{Thom1957} 
observed that there must be a polynomial in the Chern classes of $E$ and $F$ which is 
equal to $[D_{r}(\varphi)]$.   Thom posed a problem  to find  such a polynomial, and 
later Porteous gave the answer (see  Fulton-Pragacz \cite[Chapter II, (2.1), (2.5)]{Fulton-Pragacz1998}, 
Pragacz \cite[p.414]{Pragacz1988}):    
\begin{equation}  \label{eqn:Thom-PorteousFormulaI}  
  [D_{r}(\varphi)]   =  \det \, (c_{f - r - i + j} (F  - E))_{1 \leq i, j \leq e-r}. 
\end{equation} 
Here $c(F - E)$ is defined to be $c(F)/c(E)$.  Notice that the right-hand side 
of (\ref{eqn:Thom-PorteousFormulaI}) is equal to  the {\it relative} 
version of  the Schur polynomial $s_{((e-r)^{(f-r)})}(F - E)$, where 
$((e-r)^{(f-r)})$ 
 is a {\it rectangular  partition}  with $(f-r)$ rows and $(e-r)$ columns  (for the notation, 
see Fulton-Pragacz \cite[\S 3.2]{Fulton-Pragacz1998}),  and the above formula 
becomes as follows:  
\begin{equation}   \label{eqn:Thom-PorteousFormulaII} 
          [D_{r}(\varphi)]   =  s_{((e-r)^{(f-r)})} (F - E). 
\end{equation}  
We shall give an outline of the proof of the above formula for reader's convenience: 
 Let $\pi_{F}:  G^{f-r} (F) \longrightarrow X$ be the Grassmann bundle parametrizing 
rank $(f-r)$ quotient bundles of $F$.  On $G^{f-r}(F)$, we have the tautological
exact sequence of vector bundles: 
\begin{equation*} 
  0  \longrightarrow S_{F} \hooklongrightarrow \pi_{F}^{*}(F)  \twoheadlongrightarrow Q_{F} 
\longrightarrow 0.  
\end{equation*} 
Then the vector bundle homomorphism 
$\pi_{F}^{*}(E)   \overset{\pi_{F}^{*}\varphi}{\longrightarrow}   \pi^{*}_{F}(F) \twoheadlongrightarrow Q_{F}$ over $G^{f-r}(F)$ gives a cross-section $s_{\varphi}  
\in \Gamma (\mathrm{Hom} \, (\pi^{*}_{F}(E) , Q_{F}))  \cong 
\Gamma (\pi^{*}_{F} (E)^{\vee}  \otimes Q_{F})$.  Denote by $Z(s_{\varphi}) \subset G^{f-r}(F)$
the zero locus of  $\varphi$.  Then for an element $W \in G^{f-r}(F)$ with $\pi_{F}(W) = x  \in X$,   one sees immediately that $W  \in Z(s_{\varphi})$ implies  $\Im \, \varphi_{x} \subset W$, 
and hence $\rank \, \varphi_{x} \leq \dim W = r$.  Thus we have $x \in D_{r}(\varphi)$.  
From this, the set $Z(s_{\varphi})$ maps onto 
$D_{r}(\varphi)$.  Then under appropriate conditions, the class $[Z(s_{\varphi})]$ is given by 
the top Chern class $c_{e (f-r)}(\pi^{*}_{F}(E)^{\vee} \otimes Q_{F})$.  Therefore we have 
the following formula: 
\begin{equation*} 
     \pi_{F *}  (c_{e (f-r)} (\pi^{*}_{F}(E)^{\vee} \otimes Q_{F}))  = [D_{r}(\varphi)],   
\end{equation*} 
and we have to compute the left-hand side of the above equation.  
This can be done by making use of Gysin formulas.  Let $x_{1}, \ldots, x_{f}$ 
(resp. $a_{1}, \ldots, a_{e}$)  be the Chern roots of $F$ (resp. $E$)  as in \S \ref{subsubsec:GysinFormulasFl(E)->X}. The Chern roots of $Q_{F}$ are 
$x_{1}, \ldots, x_{f-r}$. By the splitting principle, 
the top Chern class  $c_{e(f-r)} (\pi^{*}_{F}(E)^{\vee} \otimes Q_{F})$ is given by the 
product $\prod_{i=1}^{f-r}  \prod_{j=1}^{e} (x_{i} - a_{j})$. 
 On the other hand,  by a similar argument as in the previous subsection \S \ref{subsubsec:GysinFormulasFl(E)->X},  the Gysin map $\pi_{F *}:  H^{*}(G^{f-r}(F)) 
\longrightarrow H^{*}(X)$ is described by the following  symmetrizing operator
(see also Pragacz \cite[Lemma 2.5]{Pragacz1988}, \cite[Proposition 4.2]{Pragacz1996}): 
\begin{equation*} 
    \pi_{F *} (g(x_{1}, \ldots, x_{f}))
    = \sum_{\overline{w} \in S_{f}/S_{f-r} \times S_{r}} 
    w \cdot \left [  
                      \dfrac{g (x_{1}, \ldots, x_{f})}  
                                     {\prod_{1 \leq i \leq f-r} \prod_{f-r+1 \leq j \leq f}  (x_{i} - x_{j}) } 
                \right ] 
\end{equation*} 
for a polynomial $g(X_{1}, \ldots, X_{f})  \in H^{*}(X)[X_{1}, \ldots, X_{f}]^{S_{f-r} \times S_{r}}$.  
From this description, one can compute $\pi_{F *} (\prod_{i=1}^{f-r} \prod_{j=1}^{e} (x_{i} - a_{j}))$, 
and obtain the formula (\ref{eqn:Thom-PorteousFormulaII}).\footnote{
As in Ikeda-Naruse \cite[\S 5.1]{Ikeda-Naruse2009}, Molev-Sagan \cite[\S 2]{Molev-Sagan1999}, let us introduce the following notation (cf. \S \ref{subsec:UFSSPQF}): 
Set  
\begin{equation*} 
     (t|a)^{k} := \prod_{i=1}^{k} (t - a_{i})  = (t -a_{1})(t -a_{2}) \cdots (t - a_{k})
\end{equation*} 
for any integer $k \geq 0$ (Here $a = (a_{1}, \ldots, a_{e}) = (a_{1}, \ldots, a_{e}, 0, 0, \ldots)$
is the Chern roots of $E$).  
 Then one can rewrite 
\begin{equation*} 
     \prod_{i=1}^{f-r} \prod_{j=1}^{e} (x_{i} - a_{j})  = \prod_{i=1}^{f-r} (x_{i}|a)^{e}. 
\end{equation*} 
Then one computes $\pi_{F *}(\prod_{i=1}^{f-r} (x_{i}|a)^{e})$ by the 
above symmetrizing operator description of $\pi_{F *}$, and  
obtains the {\it factorial Schur polynomial} $s_{((e-r)^{(f-r)})} (\x_{f}|a)$, 
which is equal to $s_{((e-r)^{(f-r)})}  (F - E)$.  
}       We remark that 
the $K$-theoretic analogue of this formula is also given by  Buch \cite[Theorem 2.3]{Buch2002(Duke)}.   In \S \ref{subsec:Thom-PorteousFormulaComplexCobordism}, 
we shall generalize the Thom-Porteous formula  for  cohomology 
to the complex cobordism theory.

\section{Universal Hall-Littlewood functions}  \label{sec:UHLF} 

As mentioned in the introduction (see also Example \ref{ex:FGL}), 
Quillen  \cite{Quillen1969}  showed that the complex cobordism theory $MU^{*}(-)$ (with the associated formal group law $F_{MU}$)  has the following  {\it universal}   property: for any complex-oriented cohomology theory $h^{*}(-)$ (with the associated formal group law $F_{h}$), 
there exists a homomorphism of rings $\theta: MU^{*} \longrightarrow h^{*}$ 
such that  $F_{h}(X, Y) = (\theta_{*} F_{MU}) (X, Y) 
= X  + Y + \sum_{i, j \geq 1} \theta (a^{MU}_{i, j}) X^{i}Y^{j}$.  
Thus it will be sufficient to consider the case when $h = MU$, 
for general case follows immediately from the universal one 
by the specialization $a^{MU}_{i, j} \longmapsto  \theta (a^{MU}_{i, j}) \; (i, j \geq 1)$.  
Recall that, by Quillen again, the coefficient ring $MU_{*} = MU^{-*}$ is isomorphic to the 
Lazard  ring  $\L$.   
In our previous paper \cite{Nakagawa-Naruse2016}, we introduced  the {\it universal 
Schur} ($S$-) {\it functions}   $s^{\L}_{\lambda}(\x_{n})$  for $\lambda$ partitions, and  
 the {\it universal 
Schur $P$- and $Q$-functions}   $P^{\L}_{\nu} (\x_{n})$, 
$Q^{\L}_{\nu} (\x_{n})$  for $\nu$ strict partitions.  
In this section, we introduce the {\it universal Hall-Littlewood functions} 
$H^{\L}_{\lambda}(\x_{n}; t)$ which will be expected to interpolate the univerasal Schur $S$-functions 
and the universal Schur $P$-functions.  
Since these  functions   
will be of independent interest in terms of, e.g., algebraic combinatorics, 
so apart from geometry,  we shall deal with these functions  purely algebraically, 
and slightly changes the notation  concerning the formal group law  
in this section.

\subsection{Lazard ring $\L$ and the universal formal group law}    \label{subsec:UFGL}   
We begin with collecting  the basic facts about the Lazard ring. 
We use the convention  as  in Levine-Morel's book \cite{Levine-Morel2007}.  
In \cite{Lazard1955},  Lazard considered a universal commutative formal group law 
of rank one $(\L, F_{\L})$,  where the ring $\L$, called the {\it Lazard ring}, 
is isomorphic to the polynomial ring 
in countably infinite number of variables with  integer coefficients,    
and $F_{\L} = F_{\L}(u,v)$ is  the {\it universal formal group law} (for a construction 
and basic properties of $\L$, see Levine-Morel \cite[\S 1.1]{Levine-Morel2007}):
\begin{equation*} 
F_{\L} (u,v) =   u + v + \sum_{i,j \geq 1} a^{\L}_{i,j} u^{i}  v^{j}   \in    \L[[u,v]].
\end{equation*} 
This is a formal power series in $u$,  $v$ with coefficients $a^{\L}_{i,j}$ of formal variables
which satisfies the axiom of the  formal group law (see  \S \ref{subsec:COGCT}).
For the universal formal group law, we shall use the notation (see Levine-Morel \cite[\S 2.3.2]{Levine-Morel2007}) 
\begin{equation*} 
\begin{array}{llll} 
  &  u  +_{\L}    v =  F_{\L}(u,  v)   \quad  & \text{(formal sum)}, \medskip \\ 
  &  \overline{u} =   [-1]_{\L} (u)  = \chi_{_{\L}}(u)  & \text{(formal  inverse of} \;   u).    \medskip 
\end{array}
\end{equation*}
Note that  $\overline{u}\in \L[[u]]$ is a formal power series in $u$ with initial term $-u$, 
 and   first few terms appear  in  Levine-Morel  \cite[p.41]{Levine-Morel2007}.  
The $n$-series $[n]_{F_{\L}}(u) $ introduced in \S \ref{subsubsec:FGL} shall be denoted simply 
by $[n]_{\L} (u)$ in the sequel.  
In what follows, we regard $\L$ as a   graded   algebra over $\Z$, 
and the grading of $\L$ is given  by $\deg \, (a^{\L}_{i, j}) =  1 - i - j  \; (i, j \geq 1)$ 
(see Levine-Morel \cite[p.5]{Levine-Morel2007}).  
Be aware that in topology, it is customary to give   $a^{\L}_{i, j}$ the {\it cohomological} 
degree $2(1 - i - j)$.  

\subsection{Universal factorial Schur $S$-, $P$-, and $Q$-functions}   \label{subsec:UFSSPQF}  
In this subsection, we recall the definitions of the {\it universal factorial 
Schur $S$-, $P$-, and $Q$-functions}  following  Nakagawa-Naruse \cite[\S 4]{Nakagawa-Naruse2016}.  
Besides the variables $\x = (x_{1}, x_{2}, \ldots)$, we 
prepare another set of variables $\b = (b_{1}, b_{2}, \ldots)$.  
We provide the variables
$\x=(x_{1},  x_{2},  \ldots)$ and $\b = (b_{1},  b_{2},  \ldots)$ with degree
$\deg  \,  (x_{i}) = \deg \,   (b_{i})= 1 $ for $i = 1, 2, \ldots$.    
In what follows, when considering polynomials or formal power series 
$f(x_{1}, x_{2}, \ldots)$ with coefficients in $\L$ (or $\L[[\b]]$), 
we shall call the degree with respect to $x_{1}, x_{2}, \ldots, b_{1}, b_{2}, \ldots$, and 
$a_{i, j}^{\L}  \; (i, j \geq 1)$  the {\it total} degree of $f(x_{1}, x_{2}, \ldots)$.  

For an integer $k\geq 1$, we define a generalization of the 
ordinary $k$-th power $t^{k}$  
 by 
\begin{equation*} 
   [t| \b]^{k}  := \displaystyle{\prod_{i=1}^{k}}  (t +_{\L}   b_{i})  
               = (t +_{\L}   b_{1})(t  +_{\L}   b_{2}) \cdots (t +_{\L}   b_{k}) 
\end{equation*} 
and its variant by 
\begin{equation*} 
   [[t| \b  ]]^{k}  :=(t +_{\L}  t)[t| \b ]^{k-1}  
  = (t +_{\L}   t) (t  +_{\L}   b_{1})(t +_{\L}   b_{2}) \cdots (t +_{\L}   b_{k-1}),
\end{equation*}  
where we  set $[t| \b ]^{0} =   [[t| \b ]]^{0} := 1$.
For a partition,\footnote{
For notation and terminology on partitions which will be used throughout this paper, 
we mainly follow those in Macdonald's book \cite[Chapter I]{Macdonald1995}. 
} i.e., a non-increasing sequence of non-negative integers 
 $\lambda=(\lambda_1,\ldots,\lambda_r) \;
 (\lambda_{1} \geq \lambda_{2} \geq \cdots \geq \lambda_{r} \geq 0)$,\footnote{
It is customary not to distinguish two partitions which differ only by a string of 
zeros at the end.  
} 	
we set
\begin{equation*} 
  [\x| \b ]^{\lambda}  :=  \displaystyle{\prod_{i=1}^{r}}  [x_i|\b]^{\lambda_i}  \quad 
\text{and}  \quad 
[[\x| \b ]]^{\lambda}  :=  \displaystyle{\prod_{i=1}^{r}} [[x_i|\b]]^{\lambda_i}.
\end{equation*} 
Let $\mathcal{P}_{n}$ denote  the set of all partitions of length $\leq n$. 
For a positive integer $n$,   we set $\rho_{n}=(n, n-1, \ldots, 2, 1)$.
For partitions $\lambda, \mu \in \mathcal{P}_{n}$, $\lambda + \mu$ 
is a partition  of length $\leq n$ defined by $(\lambda + \mu)_{i} := \lambda_{i} + \mu_{i} \; (1 \leq i \leq n)$.  With this notation, the {\it universal factorial Schur $(S$-$)$ function}
$s^{\L}_{\lambda}(\x_{n}|\b)  = s^{\L}_{\lambda}(x_{1}, \ldots, x_{n}| \b)$  
corresponding to a partition $\lambda = (\lambda_{1}, \ldots, \lambda_{n})  \in \mathcal{P}_{n}$ 
is defined to be 
\begin{equation}   \label{eqn:Definitions^L(x_n|b)}  
s^{\L}_{\lambda} (\x_{n}|\b) = s^{\L}_{\lambda} (x_{1},  \ldots, x_{n}|\b):=
\displaystyle
\sum_{w  \in S_{n}} w  \cdot \left[ \frac{[\x |\b]^{\lambda + \rho_{n-1}}}
                                                {\prod_{1\leq i<j\leq n}(x_{i}  +_{\L}   \overline{x}_{j})}
                                       \right]. 
\end{equation} 
We also define 
\begin{equation}   \label{eqn:Definitions^L(x_n)}   
s^{\L}_{\lambda} (\x_{n}) 
= s^{\L}_{\lambda} (x_{1}, \ldots, x_{n}) :=s^{\L}_{\lambda} (x_{1}, \ldots, x_{n}|\bm{0}). 
\end{equation} 
The function $s^{\L}_{\lambda}(\x_{n})$ will be  called just the {\it universal Schur function}.  
\begin{rem}   \label{rem:Fel'dman'sGysinFormula} 
The non-equivariant version  $s^{\L}_{\lambda}(\x_{n})$  is  already defined by Fel'dman 
\cite[Definition 4.2]{Fel'dman2003}.  These are called the {\it generalized Schur polynomials} there.  In that paper,  the author also established a Gysin formula for these generalized Schur 
polynomials $($see \cite[Theorem 4.5]{Fel'dman2003}$)$.\footnote{
Fel'dman's  formula is a  generalization of  Fulton-Pragacz' formula  \cite[(4.2)]{Fulton-Pragacz1998} (see also (\ref{eqn:GysinFormulas_lambda(Q)s_mu(S)}) in this paper) 
to the complex cobordism theory  as well as to general partial flag bundles .  
However, we think that his formula should be modified correctly. 
In the later section \S \ref{subsubsec:Fel'dman'sGysinFormula}, we shall establish a Gysin formula for the universal 
Schur functions, thus correcting Fel'dman's formula.  
}  
\end{rem}  
\noindent
Since 
\begin{equation*} 
  x_{i}  +_{\L}  \overline{x}_{j}=  (x_{i} -x_{j})(1+\text{higher degree terms in } x_{i} \text{ and } x_{j}
\text{ with coefficients in }\L),  
\end{equation*}
the function $s^{\L}_{\lambda}(\x_{n}|\b)$ is a  formal power series with coefficients in $\L$ 
in the variables $x_{1}, \ldots, x_{n}$ 
and  $b_{1}, b_{2}, \ldots, b_{\lambda_{1} + n-1}$. 
It is also a  homogeneous  formal power series  
of  total     degree  $|\lambda|  = \sum_{i=1}^{n} \lambda_{i}$, the size of $\lambda$.  
In (\ref{eqn:Definitions^L(x_n|b)}), 
if we put $a^{\L}_{i, j} = 0$ for all $i, j \geq 1$ and $b_{i} = -a_{i} \; (i = 1, 2, \ldots)$, 
where $a = (a_{1}, a_{2}, \ldots, )$ is another sequence of parameters, 
 the functions $s^{\L}_{\lambda}(\x_{n}|\b)$ 
reduce to the  factorial Schur polynomials  usually denoted by 
$s_{\lambda}(\x_{n}|a)$ (for its definition, see  Ikeda-Naruse \cite[\S 5.1]{Ikeda-Naruse2009}, Macdonald \cite[I,  \S 3,  Examples 20]{Macdonald1995},  Molev-Sagan \cite[p.4431]{Molev-Sagan1999}).  
If we put $a^{\L}_{1, 1} = \beta$  and  $a^{\L}_{i, j} = 0$ for all $(i, j) \neq (1, 1)$,\footnote{
Notice that the sign convention of $\beta$ is opposite from the one given in Example 
\ref{ex:FGL}.   In the rest of this paper, we shall use this sign convention that fits in 
with the listed references here.  
}  
then $s^{\L}_{\lambda}(\x_{n}|\b)$ reduce to the factorial Grothendieck 
polynomials $G_{\lambda}(\x_{n}|\b)$ (for its definition, see Ikeda-Naruse \cite[(2.12), (2.13)]{Ikeda-Naruse2013},   McNamara \cite[Definition 4.1]{McNamara2006}).  
Thus our functions $s^{\L}_{\lambda}(\x_{n}|\b)$  are  generalizations of these polynomials 
and hence {\it universal} in this sense.  
 Note that unlike the usual factorial Schur and  Grothendieck 
polynomials,  the function
$s_{\emptyset}^{\L}(\x_{n}|\b)$ corresponding to the empty partition $\emptyset 
= (0^{n})$ is not equal to $1$.  For instance, we have 
\begin{equation*} 
      s^{\L}_{\emptyset} (\x_{2}|\b) =   \dfrac{x_{1} +_{\L} b_{1}}{x_{1} +_{\L} \overline{x}_{2}}  
                                               +  \dfrac{x_{2} +_{\L} b_{1}}{x_{2} +_{\L} \overline{x}_{1}}
         =  1 + a^{\L}_{1, 2}x_{1}x_{2} + a^{\L}_{1, 1} a^{\L}_{1, 2}b_{1} x_{1}x_{2} + \cdots 
        \neq 1.  
\end{equation*} 

As mentioned in Remark \ref{rem:Fel'dman'sGysinFormula},  in order to formulate 
Fel'dman's Gysin formula for the universal Schur functions,  
we need to extend the above definition to arbitrary 
sequences  
of non-negative integers.  
Thus,  for a sequence $I = (I_{1}, I_{2}, \ldots, I_{n})$ of non-negative integers, 
we define $s^{\L}_{I}(\x_{n})$ to be 
\begin{equation} 
     s^{\L}_{I} (\x_{n})  :=  
\sum_{w  \in S_{n}}    w  \cdot \left[ 
                                                \frac{    {\x}^{I + \rho_{n-1}}}
                                                {\prod_{1\leq i<j\leq n}(x_{i}  +_{\L}   \overline{x}_{j})}
                                       \right]. 
\end{equation}

Next let us recall the definition of the universal factorial Schur $P$- and $Q$-functions. 
Denote by $\mathcal{SP}_{n}$ the set of 
all strict partitions of length $\leq n$, i.e., a  sequence of positive integers 
$\nu = (\nu_{1}, \ldots, \nu_{k})$ with $k \leq n$ 
 such that $\nu_{1}  > \cdots > \nu_{k} > 0$. Then, 
for a strict partition $\nu =(\nu_{1},  \ldots, \nu_{k})  \in \mathcal{SP}_{n}$,  
the {\it unviersal factorial Schur $P$- and $Q$-functions} are  defined to be  
\begin{equation}    \label{eqn:DefinitionP^L(x_n|b)Q^L(x_n|b)}  
\begin{array}{rlll} 
& P^{\L}_{\nu}(\x_{n}|\b)  =  P^{\L}_{\nu} (x_{1}, \ldots, x_{n} |\b) 
:=  
\displaystyle\frac{1}{(n-k)!}
\sum_{w  \in  S_{n}} w \cdot 
  \left[
           [\x | \b]^{\nu}
            \prod_{i=1}^{k}  
            \prod_{j=i+1}^{n}   \frac{x_{i}  +_{\L}  x_{j}}{x_{i} +_{\L}    \overline{x}_{j}} 
   \right],  \medskip \\
& Q^{\L}_{\nu}(\x_{n}|\b) =   Q^{\L}_{\nu} (x_{1}, \ldots, x_{n} |\b)
:=    
\displaystyle\frac{1}{(n-k)!}
\sum_{w  \in  S_{n}}w  \cdot 
  \left[
           [[\x|\b]]^{\nu}
               \prod_{i=1}^{k} 
               \prod_{j=i+1}^n \frac{x_{i} +_{\L}   x_{j}}{x_{i} +_{\L}  \overline{x}_{j}}
  \right],    \medskip 
\end{array} 
\end{equation} 
where the symmetric group $S_{n}$ acts only on the $x$-variables $x_{1}, \ldots, x_{n}$
by permutations.   

We also define 
\begin{equation*} 
\begin{array}{rllll} 
P^{\L}_{\nu}(\x_{n}) & = P^{\L}_{\nu} (x_{1}, \ldots, x_{n}) 
:= P^{\L}_{\nu}  (x_{1},  \ldots, x_{n}| \bm{0}), \medskip \\
Q^{\L}_{\nu}(\x_{n}) & = Q^{\L}_{\nu} (x_{1},  \ldots, x_{n}) 
:= Q^{\L}_{\nu} (x_{1}, \ldots, x_{n}| \bm{0}).  \medskip 
\end{array} 
\end{equation*} 
\noindent 
The functions  $P^{\L}_{\nu} (\x_{n}|\b)$ and  $Q^{\L}_{\nu} (\x_{n}|\b)$
are   formal power series with coefficients in $\L$ 
in the variables $x_{1},  \ldots, x_{n}$ and    $b_{1}, b_{2}, \ldots, b_{\nu_{1}}$ for $P^{\L}_{\nu}$ 
(resp. $b_{1}, b_{2} \ldots, b_{\nu_{1} - 1}$ for $Q^{\L}_{\nu}$).  
These are  homogeneous formal power series of 
total    degree  $|\nu|$.   
In  (\ref{eqn:DefinitionP^L(x_n|b)Q^L(x_n|b)}), 
if we put $a^{\L}_{i, j} = 0$ for all $i, j \geq 1$ and $b_{i} = -a_{i} \; (i = 1, 2, \ldots)$, 
the functions 
$P^{\L}_{\nu} (\x_{n}|\b)$, $Q^{\L}_{\nu}(\x_{n}|\b)$ reduce to 
the  {\it factorial Schur 
$P$- and $Q$-polynomials}   $P_{\nu}(\x_{n}|a)$, $Q_{\nu}(\x_{n}|a)$ 
(for their definitions, see   Ikeda-Mihalcea-Naruse \cite[\S 4.2]{IMN2011}), 
Ikeda-Naruse \cite[Definition 8.1]{Ikeda-Naruse2009},   Ivanov \cite[Definitions 2.10 and 2.13]{Ivanov2004}).  
If we put $a^{\L}_{1, 1} = \beta$ and $a^{\L}_{i, j}  = 0$ for all $(i, j) \neq (1, 1)$, then $P^{\L}_{\nu}(\x_{n}|\b)$, $Q^{\L}_{\nu}(\x_{n}|\b)$ reduce to the {\it $K$-theoretic factorial Schur 
$P$- and $Q$-polynomials}  $GP_{\nu}(\x_{n}|\b)$, $GQ_{\nu}(\x_{n}|\b)$ 
due to Ikeda-Naruse  \cite[Definition 2.1]{Ikeda-Naruse2013}.  
Thus our functions $P^{\L}_{\nu}(\x_{n}|\b)$, $Q^{\L}_{\nu}(\x_{n}|\b)$
are  generalizations of these polynomials  and hence {\it universal} in this sense.  

\subsection{Universal Hall-Littlewood functions}     \label{subsec:UHLF}   
In this subsection, we introduce the {\it universal Hall-Littlewood functions} which 
interpolate the $S$-functions and the $P$-functions.  We use the notation 
as in Pragacz \cite{Pragacz2015}.  
Let  $\lambda = (\lambda_{1},  \ldots, \lambda_{n})  \in \mathcal{P}_{n}$ be a
partition of  length $\ell (\lambda)  \leq n$.    
Consider the maximal ``intervals'' $I_{1}, I_{2}, \ldots, I_{d}$  in 
$[n] := \{ 1, 2, \ldots, n \}$, where the sequence $\lambda$ is ``constant''.  
Thus we have  a decomposition 
\begin{equation*} 
   [n]  = I_{1} \sqcup I_{2} \sqcup \cdots \sqcup I_{d} \quad  (\text{disjoint union}). 
\end{equation*}  
Here and in what follows, we keep the following convention: 
when we refer to such a decomposition, we always arrange ``intervals''  $I_{1}, I_{2}, \ldots$ 
in increasing order, 
that is, $\mathrm{max} \,  I_{r}  <  \mathrm{min}  \, I_{r+1}$ for each $r$.  
Let  $m_{r}$ be the  ``length'' of the interval $I_{r}$ for $r = 1, 2, \ldots, d$, 
namely the cardinality of $I_{r}$ so that $\sum_{ r =1}^{d} m_{r} = n$.  
We write  $n (i)$ the  number of the interval containing $i$  for $i \in [n]$, namely if $i $ is in $I_{r}$, then $n (i) = r$.  
Notice that, since $\lambda  = (\lambda_{1}, \ldots, \lambda_{n})$ is a partition, 
$n(i) < n(j)$ is equivalent to $\lambda_{i} > \lambda_{j}$ for $i, j  \in [n]$.  
We define a subgroup $S_{n}^{\lambda}$  of $S_{n}$ as the  stabillizer of $\lambda$. 
Thus 
\begin{equation*} 
    S_{n}^{\lambda} =  \prod_{i=1}^{d} S_{m_{i}} =   S_{m_{1}} \times S_{m_{2}} \times \cdots \times S_{m_{d}}.  
\end{equation*}    
Denote by  $\ell_{\L}  (x)   \in \L[[x]]$ the  {\it logarithm}\footnote{
See e.g.,   Kono-Tamaki \cite[Lemma 6.27]{Kono-Tamaki2006}, 
Levine-Morel \cite[Lemma 4.1.29]{Levine-Morel2007}, 
 Quillen \cite{Quillen1969},  Ravenel \cite[Appendix A2]{Ravenel2004}.  
If we put $a^{\L}_{i, j} = 0$ for all $i, j \geq 1$,  then  $F_{\L}(u, v)$ reduces to 
the additive formal group law $F_{a}(u, v) = u + v$, 
 and both $\ell_{\L} (x)$ and $\ell_{\L}^{-1}(x)$  reduce to $x$. 
If we put $a^{\L}_{1, 1} = \beta$ and $a^{\L}_{i, j} = 0$ for all $(i, j) \neq (1,1)$, 
then $F_{\L}(u, v)$ reduces to the multiplicative formal group law 
$F_{m}(u, v) = u + v + \beta uv$.    In this case, 
 $\ell_{\L}(x)$ reduces to $\beta^{-1} \log \, (1 + \beta x) 
= \sum_{i=0}^{\infty}  \dfrac{(-\beta)^{i}}{i + 1}  x^{i + 1}$, 
and $\ell_{\L}^{-1}(x)$ reduces to $\beta^{-1} (e^{\beta x} - 1) 
= \sum_{i=0}^{\infty}  \dfrac{\beta^{i}}{(i + 1)!}  x^{i + 1}$.  
} of  $F_{\L}$, i.e.,  
a unique formal power series with leading term $x$ 
such that   
\begin{equation*} 
\ell_{\L}(a +_{\L} b)  =   \ell_{\L} (a) + \ell_{\L} (b).   
\end{equation*} 
Using the logarithm $\ell_{\L}(x)$, one can rewrite the $n$-series $[n]_{\L}(x)$ for a non-negative integer $n$, aforementioned in \S \ref{subsec:UFGL},    as  
\begin{equation*}
   \ell_{\L} ([n]_{\L}(x))  =   \ell_{\L}(\underbrace{x +_{\L}  + \cdots +_{\L} x}_{n}) 
   =  \underbrace{\ell_{\L}(x) + \cdots + \ell_{\L}(x)}_{n} = n \cdot \ell_{\L}(x), 
\end{equation*} 
in other words, $[n]_{\L} (x)  = \ell_{\L}^{-1}  (n \cdot \ell_{\L} (x))$.  
This formula allows us to define 
\begin{equation*} 
   [t]_{\L}(x) :=  \ell_{\L}^{-1} (t  \cdot \ell_{\L} (x))
\end{equation*} 
 for an  indeterminate $t$.  This is a natural extension of  $t \cdot x$ as well as 
the $n$-series  $[n]_{\L} (x)$.\footnote{
$[t]_{\L} (x)$ reduces to  just $t \cdot x$ for the  additive formal group law
 $F_{a}(u, v) = u + v$, 
and $\beta^{-1} \{ (1 + \beta x)^{t} - 1 \}  =  \sum_{i=1}^{\infty}  \dfrac{(t)_{i}} {i!}   \beta^{i-1} x^{i}$ for  the multiplicative formal group law $F_{m}(u, v) = u + v + \beta uv$. 
Here  $(t)_{i}$ means $t(t-1)(t-2) \cdots (t-i+1)$.   
}  
\begin{defn}  [Universal Hall-Littlewood function]   \label{df:DefinitionUHLF}  
With the above notation, for a partition  $\lambda
\in \mathcal{P}_{n}$, 
we define 
\begin{equation}   \label{eqn:DefinitionUHLF}    
  H^{\L}_{\lambda}(\x_{n}; t) 
  :=  \sum_{\overline{w}  \in S_{n}/S^{\lambda}_{n}}  
           w \cdot \left [   \x^{\lambda}  
                                  \prod_{
                                                   1 \leq i < j \leq n, \; 
                                                               n (i) <    n (j)  
                                           }  
                                    \dfrac{x_{i} +_{\L}  [t]_{\L} (\overline{x}_{j}) } {x_{i} +_{\L}  \overline{x}_{j}}  
                       \right ].   
\end{equation}
\end{defn}  
\noindent
If we put $a^{\L}_{i, j} = 0$ for all $i, j \geq 1$,  the functions $H^{\L}_{\lambda}(\x_{n}; t)$ 
reduce to the usual Hall-Littlewood polynomials denoted by $P_{\lambda}(x_{1}, \ldots, x_{n}; t)$ 
in Macdonald's book \cite[Chapter III, \S 2, (2.2)]{Macdonald1995}.  
For the usual Hall-Littlewood polynomial   $P_{\lambda}(x_{1}, \ldots, x_{n}; t)$, 
it is known  that 
\begin{equation*} 
   P_{\lambda}(x_{1}, \ldots, x_{n}; 0)  = s_{\lambda}(x_{1}, \ldots, x_{n})
\end{equation*} 
under the specialization $t = 0$ (see Macdonald \cite[Chapter III, \S 2, (2.3)]{Macdonald1995}), 
 and 
\begin{equation*} 
    P_{\lambda} (x_{1}, \ldots, x_{n}; 1) = m_{\lambda} (x_{1}, \ldots, x_{n}),  
\end{equation*} 
under the specialization $t = 1$ (see Macdonald \cite[Chapter III, \S 2, (2.4)]{Macdonald1995}).  
Here $m_{\lambda}$ denotes the monomial symmetric polynomial corresponding to 
$\lambda$.  
Moreover,  for a strict partition $\nu$ of length 
$\ell (\nu)  \leq n$,  one obtains that 
\begin{equation}   \label{eqn:Hall-LittlewoodSchurP} 
   P_{\nu}(x_{1}, \ldots, x_{n}; -1)  =  P_{\nu}(x_{1}, \ldots, x_{n})
\end{equation} 
under the specialization $t = -1$  (see Macdonald \cite[Chapter III, \S 8, Examples 1.]{Macdonald1995}).  

For the universal Hall-Littlewood functions $H^{\L}_{\lambda}(\x_{n}; t)$,  it follows 
immediately from (\ref{eqn:DefinitionUHLF}) that $H^{\L}_{\lambda}(\x_{n}; 1)  = m_{\lambda}(\x_{n})$
under the specialization $t = 1$.  Let us next consider the specialization $t = -1$. 
Let $\nu = (\nu_{1}, \ldots,  \nu_{k})  \in \mathcal{S P}_{n}$  be a strict partition with length $\ell (\nu) = k  \leq n$.  Then we have a decomposition 
$[n] =  I_{1} \sqcup \cdots \sqcup I_{k}  \sqcup I_{k+1}$, where 
$I_{r}  = \{ r \} \; (r = 1, \ldots, k)$ and $I_{k+1}  = \{ k+1, \ldots, n \}$.  
Therefore we have  
  $m_{r} = 1 \; (r = 1, \ldots, k)$,  $m_{k+1}    = n-k$ and     
  $n(i)  =  i \; (i = 1, \ldots, k)$, $n(i)   = k+1 \; (i = k+1, \ldots, n)$.   
The stabilizer  of $\nu$ is given by $S^{\nu}_{n}  = (S_{1})^{k}  \times S_{n-k}$.   
Therefore  it follows from Definition  \ref{df:DefinitionUHLF} and (\ref{eqn:DefinitionP^L(x_n|b)Q^L(x_n|b)}), we have 
\begin{equation*} 
\begin{array}{llll} 
    H^{\L}_{\nu}(\x_{n}; -1)  & =  \displaystyle{
                                                   \sum_{\overline{w}  \in S_{n}/(S_{1})^{k} \times S_{n-k}}
                                                             }   
                                         w \cdot \left [ 
                                                           \x^{\nu}  \prod_{
                                                                                        1 \leq i < j \leq n,  \;  
                                                                                          1 \leq  i \leq k}
                                                                        \dfrac{x_{i}  +_{\L}  x_{j} } {x_{i} +_{\L}  \overline{x}_{j}}     
                                                    \right ]    \medskip \\
                                     & =   \dfrac{1}{(n-k)!}  \displaystyle{\sum_{w  \in S_{n}}}  
                                         w  \cdot \left [ 
                                                           \x^{\nu}  \prod_{i = 1}^{k}  \prod_{j = i + 1}^{n}  \dfrac{x_{i} +_{\L} x_{j}} {x_{i} +_{\L} \overline{x}_{j}}   
                                                      \right ]    = P^{\L}_{\nu}(\x_{n}).    \medskip 
\end{array} 
\end{equation*} 

We now consider the specialization $t = 0$.  Let 
$\lambda = (\lambda_{1}, \ldots, \lambda_{n})  \in \mathcal{P}_{n}$ be a partition 
with length $\ell (\lambda) \leq n$. 
Then we have  a decomposition $[n] = I_{1} \sqcup I_{2} \sqcup \cdots \sqcup I_{d}$ 
as above.   Letting  $m_{r}$ be the cardinality of $I_{r}$ for $r = 1, \ldots, d$,  
one can rewrite  $\lambda = (n_{1}^{m_{1}} \; n_{2}^{m_{2}} \;  \ldots \;  n_{d}^{m_{d}}) 
\;  (n_{1} > n_{2} > \cdots > n_{d}  \geq  0)$.  We put 
$\nu (r) := m_{1} + \cdots + m_{r}$ for $r = 1, 2, \ldots, d$ and 
$\nu (0) := 0$.  
Then the specialization $t  = 0$ gives 
\begin{equation}   \label{eqn:DefinitionNUSF}   
\begin{array}{llll} 
  H^{\L}_{\lambda}(\x_{n}; 0) 
    & =  \displaystyle{
                                \sum_{\overline{w}  \in S_{n}/S^{\lambda}_{n}}
                           }   
           w \cdot \left [   \x^{\lambda}  
                                  \prod_{
                                                             1 \leq i < j \leq n,  \; 
                                                              n (i) <    n (j)  
                                           }    
                                    \dfrac{x_{i}  } {x_{i} +_{\L}  \overline{x}_{j}}  
                       \right ]       \medskip \\
       &  =  \displaystyle{
                                    \sum_{\overline{w}  \in S_{n}/S^{\lambda}_{n}}
                              }  
           w \cdot \left [   \prod_{i=1}^{n} x_{i}^{\lambda_{i}}  
                                  \prod_{
                                                            1 \leq i < j \leq n, \; 
                                                                n(i) < n(j)    
                                           }   
                                    \dfrac{x_{i}  } {x_{i} +_{\L}  \overline{x}_{j}}  
                       \right  ]     \medskip   \\
    & =   \displaystyle{
                                    \sum_{\overline{w}  \in S_{n}/S^{\lambda}_{n}}
                              }  
           w \cdot \left [     
                                 \dfrac{
                                          \prod_{r=1}^{d}   \left ( 
                                                                              \prod_{m_{1} + \cdots + m_{r-1} 
                                                                                           < i \leq m_{1} + \cdots + m_{r}                 
                                                                                      } 
                                                                               x_{i}^{n_{r} + n - (m_{1}  + \cdots + m_{r})}  
                                                                 \right )
                                           }
                                            {
                                                \prod_{
                                                                           1 \leq i < j \leq n, \; 
                                                                                 n(i) < n(j)      
                                                         }
                                               (x_{i} +_{\L}  \overline{x}_{j}) 
                                            }  
                       \right  ]     \medskip   \\
   & =   \displaystyle{
                                    \sum_{\overline{w}  \in S_{n}/S^{\lambda}_{n}}
                              }  
           w \cdot \left [     
                                 \dfrac{
                                          \prod_{r=1}^{d}   \left ( 
                                                                              \prod_{\nu (r-1)    
                                                                                           < i \leq \nu (r)                
                                                                                      } 
                                                                               x_{i}^{n_{r} + n - \nu (r)}  
                                                                 \right )
                                           }
                                            {
                                                \prod_{
                                                                           1 \leq i < j \leq n,  \;  
                                                                                     n(i) < n(j)      
                                                         }
                                               (x_{i} +_{\L}  \overline{x}_{j}) 
                                            }  
                       \right  ].    \medskip   \\
\end{array}  
\end{equation} 
We shall consider  this function in later section \S \ref{sec:NUFSF}.

\section{Applications of Gysin formulas to the Schur functions}
In \S \ref{subsec:VariousGysinFormulas}, we reviewed various  Gysin formulas 
in the ordinary cohomology (or Chow) theory, 
and in the previous section \S \ref{sec:UHLF}, we introduced  {\it universal} 
analogues of  the ordinary Schur  $S$-, $P$-, $Q$-, and Hall-Littlewood functions.  
In this section, we pursue  the Gysin formulas in the complex cobordism theory  
which relate Gysin maps for flag bundles to  these universal Schur functions.  
The following \S \ref{subsec:GysinFormulasVariousSchurFunctions}  is devoted to 
the recollection of various known Gysin formulas in the ordinary cohomology 
theory, and their generalization to the complex cobordism theory 
will be treated in  \S \ref{subsec:UGFUSF}.   Our main tool for establishing 
Gysin formulas in the complex cobordism theory is the Bressler-Evens 
formula reviewed in \S \ref{subsec:Bressler-EvensFormula}.

\subsection{Gysin formulas for various Schur functions}     \label{subsec:GysinFormulasVariousSchurFunctions}   
We use the same notation as in \S \ref{subsubsec:GysinFormulasFl(E)->X}.  
Let $E  \overset{p}{\longrightarrow}   X$ be a  complex vector bundle of rank $n$ (over a variety).  
Denote by    $\tau = \tau_{E}: \mathcal{F} \ell (E) \longrightarrow X$  the associated flag bundle  parametrizing successive flags of quotient bundles  of $E$ of rank $n-1, \ldots, 2, 1$.  
The usual Schur polynomial 
$s_{\lambda}(X_{1}, \ldots, X_{n})$ 
corresponding to  a partition $\lambda \in \mathcal{P}_{n}$ is a symmetric polynomial 
in the $n$-variables $X_{1}, \ldots, X_{n}$, and therefore it can be written 
as a polynomial  in the elementary symmetric polynomials $e_{i}(X_{1}, \ldots, X_{n})$'s.  
Let $x_{1}, \ldots, x_{n}$  be the  Chern roots of $E$  as in \S \ref{subsubsec:GysinFormulasFl(E)->X}.  Then the Chern classes $c_{i}(E)$'s can be identified with 
$e_{i}(x_{1}, \ldots, x_{n})$'s as usual, and hence  $s_{\lambda}(x_{1}, \ldots, x_{n})$ 
can be expressed  as a polynomial  in  $c_{i}(E)$'s.  Let us define a cohomology class 
$s_{\lambda}(E)  \in H^{2 |\lambda|} (X)$ to be 
$\tau^{*}(s_{\lambda}(E)) := s_{\lambda}(x_{1}, \ldots, x_{n})  \in H^{2|\lambda|}(\F \ell (E))$.\footnote{
It is well-known that the induced homomorphism $\tau^{*}:  H^{*}(X) \hooklongrightarrow H^{*}(\F \ell (E))$ is injective, and hence the cohomology class $s_{\lambda}(E)$ 
is well-defined.   
}
Then  the following formula is known: 
 \begin{prop} [Pragacz \cite{Pragacz1988}, Lemma 2.3;  Fulton-Pragacz \cite{Fulton-Pragacz1998},  (4.1)]    \label{prop:Jacobi-TrudiIdentity}   
      The image of the monomial  
     ${\bf x}^{\lambda + \rho_{n-1}} = x_{1}^{\lambda_{1}+n-1} x_{2}^{\lambda_{2} + n-2} \cdots x_{n}^{\lambda_{n}}$
  under the Gysin homomorphism $\tau_{*}: H^{*}(\mathcal{F}  \ell (E))  
         \longrightarrow H^{*}(X)$ is given by
\begin{equation}  \label{eqn:Jacobi-TrudiIdentityII} 
       \tau_{*}({\bf x}^{\lambda + \rho_{n-1}} )
        =  s_{\lambda} (E).        
\end{equation}  
\end{prop}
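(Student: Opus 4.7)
The plan is to derive the identity from the symmetrizer description of $\tau_*$ given in Theorem \ref{thm:Fulton-Pragacztau^*tau_*} combined with the classical bialternant (Jacobi-Trudi) formula for Schur polynomials, and then invoke the injectivity of $\tau^*: H^*(X) \hooklongrightarrow H^*(\F \ell (E))$ to conclude.

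First I would apply Theorem \ref{thm:Fulton-Pragacztau^*tau_*} with $f(X_1, \ldots, X_n) = X_1^{\lambda_1 + n-1} X_2^{\lambda_2 + n-2} \cdots X_n^{\lambda_n}$, obtaining
\begin{equation*}
   \tau^{*} \circ \tau_{*}(\mathbf{x}^{\lambda + \rho_{n-1}})
    = \sum_{w \in S_n} w \cdot \left[ \frac{\mathbf{x}^{\lambda + \rho_{n-1}}}{\prod_{1 \leq i < j \leq n}(x_i - x_j)} \right].
\end{equation*}
Next I would pull the Vandermonde $\prod_{i<j}(x_i - x_j)$ out of the symmetrizer by noting that $w$ acts on it by $\mathrm{sgn}(w)$, which converts the expression into
\begin{equation*}
   \frac{1}{\prod_{1 \leq i < j \leq n}(x_i - x_j)} \sum_{w \in S_n} \mathrm{sgn}(w) \cdot w(\mathbf{x}^{\lambda + \rho_{n-1}})
   = \frac{\det\left(x_j^{\lambda_i + n - i}\right)_{1 \leq i, j \leq n}}{\prod_{1 \leq i < j \leq n}(x_i - x_j)}.
\end{equation*}
By the classical bialternant formula, this quotient equals the Schur polynomial $s_\lambda(x_1, \ldots, x_n)$. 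Since $s_\lambda$ is symmetric in $x_1, \ldots, x_n$, it is expressible as a polynomial in the elementary symmetric functions $e_i(x_1, \ldots, x_n) = \tau^*(c_i(E))$, which is precisely the definition of $\tau^*(s_\lambda(E))$. Hence $\tau^* \circ \tau_*(\mathbf{x}^{\lambda + \rho_{n-1}}) = \tau^*(s_\lambda(E))$.

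Finally, because $\tau^*: H^*(X) \longrightarrow H^*(\F \ell (E))$ is injective (the flag bundle $\tau$ admits a Leray-Hirsch structure, with $H^*(\F\ell(E))$ a free $H^*(X)$-module generated by the Schubert monomials in $x_1, \ldots, x_n$), we may cancel $\tau^*$ from both sides to arrive at the claimed identity $\tau_*(\mathbf{x}^{\lambda + \rho_{n-1}}) = s_\lambda(E)$. There is essentially no hard obstacle here; the only points requiring care are keeping track of the sign conventions when transposing $w$ past the Vandermonde and confirming that the symmetrizer output lies in the image of $\tau^*$ so that the cancellation step is legitimate.
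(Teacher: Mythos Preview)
Your proof is correct and follows essentially the same approach the paper takes. The paper does not give an explicit proof of Proposition~\ref{prop:Jacobi-TrudiIdentity} (it is cited as a known result of Pragacz and Fulton--Pragacz), but your derivation via Theorem~\ref{thm:Fulton-Pragacztau^*tau_*} and the bialternant formula is precisely the ordinary-cohomology specialization of how the paper obtains the universal analogue, Corollary~\ref{cor:Nakagawa-Naruse(CharacterizationUSF)}, from Theorem~\ref{thm:Nakagawa-Narusetau^*tau_*} and the definition~(\ref{eqn:Definitions^L(x_n|b)}).
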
  
\noindent
The formula (\ref{eqn:Jacobi-TrudiIdentityII}) is called the {\it Jacobi-Trudi identity}, from which 
 Fulton-Pragacz \cite{Fulton-Pragacz1998} derived some useful formulas for Grassmann bundles, 
which we recall a bit later.

Furthermore, the analogous Gysin formulas which relate the Hall-Littlewood polynomials and more general flag bundles are considered in  Pragacz \cite{Pragacz2015}. 
Let us recall these  formulas. 
We use the same notation as in \S \ref{subsec:UHLF} (see also Pragacz \cite{Pragacz2015}).  
Let   $\lambda = (\lambda_{1}, \ldots, \lambda_{n})  \in \mathcal{P}_{n}$ 
be a partition  of  length $\leq n$.  Then we have a decomposition  
\begin{equation*} 
    [n] = \{ 1, 2, \ldots, n \} = I_{1} \sqcup I_{2} \sqcup \cdots \sqcup I_{d},
\end{equation*} 
 where   $\lambda$ is  ``constant'' on each $I_{r} \; (r = 1, 2, \ldots,  d)$. 
Denote by  $m_{r}$ the length of the interval $I_{r}$ for $r = 1, 2, \ldots, d$ so that $\sum_{r =1}^{d} m_{r} = n$, and 
$n(i)$  the  number of the interval containing $i$  for $i  \in [n]$.  
We put $\nu (p) = \sum_{r=1}^{p} m_{r}$ for $p = 1, \ldots, d$ and $\nu (0) = 0$. 
 The stabilizer of $\lambda$ is denoted by $S_{n}^{\lambda}$.  
Associated to a complex vector bundle 
$E \longrightarrow X$, one can define a ``$(d-1)$-step flag bundle'' with steps of lengths $m_{r}$
   \begin{equation*} 
      \eta_{\lambda}:  \mathcal{F} \ell^{\lambda}  (E) \longrightarrow X,   
  \end{equation*} 
parametrizing flags of   quotient bundles   of $E$ of ranks 
\begin{equation*} 
   n-m_{d} = \nu (d-1),   n-m_{d} - m_{d-1} = \nu (d-2),   \ldots,  
  n-m_{d} - m_{d-1} - \cdots - m_{2} = \nu (1).   
\end{equation*} 
If $\lambda = \emptyset$, the empty partition, then $\F \ell^{\emptyset} (E)$ is 
understood to be the base space $X$.  
Here, for later discussion,  we shall fix the notation about  {\it partial flag bundles} 
associated to a complex vector bundle $E  \overset{p}{\longrightarrow}  X$ of rank $n$ (see Fulton's book 
\cite[\S 9.1, 10.6]{Fulton1997}):  For a sequence of integers $0 < r_{1} < r_{2} < \cdots < r_{k} < n$, 
let us denote by $\F \ell_{r_{1}, r_{2}, \ldots, r_{k}}(E)$ a
 partial flag bundle consisting of flags of 
subbundles $0 \subsetneq S_{1} \subsetneq S_{2} \subsetneq \cdots \subsetneq S_{k} \subsetneq  E$ with $\rank S_{i} = r_{i} \; (i = 1, \ldots, k)$.  
 Since giving a flag of subbundles of $E$  as above 
is equivalent to giving a flag of quotient bundles $E \twoheadrightarrow Q^{k} \twoheadrightarrow Q^{k-1} \twoheadrightarrow \cdots \twoheadrightarrow Q^{1} \twoheadrightarrow 0$ with
$\rank Q^{i} =  n - r_{k + 1 -i}  \; (i = 1, \ldots, k)$, this partial flag bundle is also denoted by 
$\F \ell^{n-r_{1}, n-r_{2}, \ldots, n-r_{k}}(E)$.    Moreover,   dualizing  the sequence of 
vector bundles 
\begin{equation*} 
    0      \hookrightarrow S_{1} \hookrightarrow  S_{2}  \hookrightarrow 
  \cdots \hookrightarrow     S_{k}  \hookrightarrow E  
       \twoheadrightarrow Q^{k}     
       \twoheadrightarrow  Q^{k-1} 
       \twoheadrightarrow  \cdots  \twoheadrightarrow  
                                      Q^{1}  \twoheadrightarrow 0 
\end{equation*} 
gives  the sequence  of vector bundles
\begin{equation*} 
\begin{array}{llll}   
     0      \hookrightarrow (Q^{1})^{\vee}    \hookrightarrow  (Q^{2})^{\vee}  
           \hookrightarrow 
  \cdots \hookrightarrow     (Q^{k})^{\vee}   &  \hookrightarrow   E^{\vee}      \medskip \\
       &   \twoheadrightarrow    (S_{k})^{\vee}      
       \twoheadrightarrow   (S_{k-1})^{\vee}   
       \twoheadrightarrow  \cdots  \twoheadrightarrow  
                                      (S_{1})^{\vee}  \twoheadrightarrow 0.      \medskip 
\end{array}   
\end{equation*}
Therefore we have the canonical isomorphism 
\begin{equation*} 
     \F \ell_{r_{1}, r_{2}, \ldots, r_{k}}(E) = \F \ell^{n-r_{1}, n-r_{2}, \ldots, n-r_{k}}(E)
     \cong  \F \ell_{n-r_{k}, n-r_{k-1}, \ldots, n-r_{1}} (E^{\vee})  
    = \F \ell^{r_{k}, r_{k-1}, \ldots, r_{1}}(E^{\vee}).   
\end{equation*}  
 Using this notation, we can write 
\begin{equation*} 
\begin{array}{llll}  
   \F \ell^{\lambda}(E)   
& = 
    \F \ell^{\nu (d-1),  \nu (d-2), \ldots, \nu (1)}(E)   
   = \F \ell_{n - \nu (d-1), n - \nu (d-2),  \ldots, n - \nu (1)}(E)    \medskip \\
  & = \F \ell_{\nu (1),  \nu (2), \ldots, \nu (d-1)}(E^{\vee})  
     = \F \ell^{n - \nu (1),  n - \nu (2), \ldots, n - \nu (d-1)}(E^{\vee}).   \medskip  
\end{array} 
\end{equation*}

\begin{ex}  [See Pragacz \cite{Pragacz2015}, Example 2]    \label{ex:PartialFlagBundles}  
\quad 

\begin{enumerate} 
\item   Let $\nu  = (\nu_{1}, \dots,  \nu_{k})   \in   \mathcal{S P}_{n}$ be 
a strict partition with length $\ell (\nu)  = k \leq   n$.  
Then as we saw at the end of \S $\ref{subsec:UHLF}$,  we have $d = k + 1$, and 
\begin{equation*} 
       (m_{1}, \ldots, m_{k}, m_{k+1}) = (\underbrace{1, 1, \ldots, 1}_{k}, n-k), 
    \quad   S^{\nu}_{n}  =  (S_{1})^{k} \times S_{n-k}.   
\end{equation*} 
Then the corresponding flag bundle $\eta_{\nu}:  \F \ell^{\nu}(E)  \longrightarrow  X$ 
is often denoted by $\tau^{k} = \tau^{k}_{E}:   \F \ell^{k, k-1, \ldots, 2, 1}(E) \longrightarrow X$,  and parametrizes  flags of successive quotient bundles  of $E$ of ranks $k, k-1, \ldots, 2, 1$.  As a special case of this example, the flag bundle corresponding to the 
partition $\rho_{n-1} = (n-1, n-2, \ldots, 2, 1, 0)$ is the full flag bundle
$\tau: \F \ell^{n-1, n-2, \ldots, 2,1}(E) = \F \ell (E) \longrightarrow X$.

\item  Let $\lambda = (\underbrace{a, \ldots, a}_{q},  \underbrace{b, \ldots, b}_{n-q})  =  (a^{q} \; b^{n-q})  \in \mathcal{P}_{n}$ be a partition of two rows with  $a > b \geq  0$.   Then we have a decomposition 
$[n]  = I_{1} \sqcup I_{2}$, where $I_{1} = [1, q] = \{  1, \ldots, q \}$ and 
$I_{2} = [q+1, n] = \{ q + 1, \ldots, n \}$.  Therefore we have 
\begin{equation*} 
   (m_{1}, m_{2}) = (q, n-q),  \quad  \text{and} \quad   S^{\lambda}_{n}  =   S_{q} \times S_{n-q}. 
\end{equation*} 
Thus the corresponding flag bundle is the Grassmann bundle 
$\pi:  G^{q}(E)  \longrightarrow X$  parametrizing rank  $q$ quotient bundles of $E$. 
\end{enumerate}  
\end{ex}  

Now we recall the useful formula for Grassmann bundle derived from the Jacobi-Trudi identity mentioned above.  
Let $\pi: G^{q}(E) \longrightarrow X$ be the Grassmann bundle parametrizing 
rank $q$ quotient bundles of $E$ as in Example \ref{ex:PartialFlagBundles} (2).  
On $G^{q}(E)$, we have the  tautological exact sequence of vector bundles: 
\begin{equation*} 
    0   \longrightarrow S \hooklongrightarrow \pi^{*}(E)   \twoheadlongrightarrow Q 
\longrightarrow 0, 
\end{equation*} 
where $\rank \, S  = n-q$ and $\rank \, Q = q$.   Set $r:= n - q$.   
Then by the repeated applications of the Jacobi-Trudi identity (\ref{eqn:Jacobi-TrudiIdentityII}), 
Fulton-Pragacz  \cite[(4.2)]{Fulton-Pragacz1998}  showed  the following formula:\footnote{
Note that in case the sequence $(\lambda_{1} - r,  \ldots, \lambda_{q} - r, \, 
\mu_{1}, \ldots, \mu_{r})$ is {\it not} a partition, the right-hand side is either 
$0$ or $\pm s_{\nu}(E)$ for some partition (see Fulton-Pragacz \cite[p.42, Footnote]{Fulton-Pragacz1998}).    
}
For any partitions $\lambda = (\lambda_{1} , \ldots, \lambda_{q})$,  
$\mu = (\mu_{1}, \ldots, \mu_{r})$,  
\begin{equation}   \label{eqn:GysinFormulas_lambda(Q)s_mu(S)}   
  \pi_{*} (s_{\lambda}(Q)  \cdot s_{\mu} (S))  
= s_{\lambda_{1}-r,  \ldots, \lambda_{q} - r,  \, \mu_{1}, \ldots, \mu_{r}}   (E).  
\end{equation} 
In \S \ref{subsubsec:Fel'dman'sGysinFormula}  below, we shall give a  
generalization of the above formula (\ref{eqn:GysinFormulas_lambda(Q)s_mu(S)})   as a 
special case of the corrected version of Fel'dman's Gysin formula.

Let us recall  Gysin formulas for more general flag bundles.  
The following proposition is a generalization of Theorem \ref{thm:Fulton-Pragacztau^*tau_*}
to the partial flag bundle $\eta_{\lambda}: \F \ell^{\lambda}(E) \longrightarrow X$, 
which was proved by Pragacz \cite{Pragacz2015}  as a particular case of Brion \cite[Proposition 2.1]{Brion1996}:   
\begin{prop}  [Pragacz  \cite{Pragacz2015}, Proposition 5]     \label{prop:Brion-Pragacz}  
For  an $S_{n}^{\lambda}$-invariant polynomial  $f(X_{1}, \ldots, X_{n})$   
$\in  H^{*}(X)[X_{1}, \ldots,  X_{n}]^{S_{n}^{\lambda}}$,  we have
\begin{equation*} 
    (\eta_{\lambda})_{*} (f(x_{1}, \ldots, x_{n}) ) 
=   \sum_{\overline{w} \in S_{n}/S_{n}^{\lambda}}
            w \cdot  \left [   
                                       \dfrac{f (x_{1}, \ldots, x_{n})} 
                                     { \prod_{1 \leq i < j \leq n, \; n(i) <  n(j) } (x_{i} - x_{j})   } 
                        \right ].   
\end{equation*}  
Here the element $f(x_{1}, \ldots, x_{n})$ is regarded as an element in $H^{*}(\F \ell^{\lambda}(E))
\hookrightarrow H^{*}(\F \ell (E))$.\footnote{
Strictly speaking, this formula should be considered in $H^{*}(\F \ell (E))$ via the pull-back 
$H^{*}(X)   \overset{\eta_{\lambda}^{*}}{\hooklongrightarrow} H^{*}(\F \ell^{\lambda} (E))) \hooklongrightarrow 
H^{*}(\F \ell (E))$ (cf. Corollary \ref{cor:Bressler-EvensThm1.8Cor}).   In what follows, 
we often use such abbreviation to simplify the presentation. 
}
\end{prop}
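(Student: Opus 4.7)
The strategy is to adapt the classifying-space / base-change derivation of Theorem~\ref{thm:Fulton-Pragacztau^*tau_*} given in \S\ref{subsubsec:ApplicationBressler-EvensFormula}, invoking the partial-flag Bressler-Evens formula (Corollary~\ref{cor:Bressler-EvensThm1.8Cor}) in place of Theorem~\ref{thm:Bressler-EvensThm1.8}. Take $G = U(n)$ with diagonal maximal torus $T = T^{n}$, and let $H \subset U(n)$ be the block-diagonal subgroup $U(m_{1}) \times U(m_{2}) \times \cdots \times U(m_{d})$, the block sizes being those of the interval decomposition $[n] = I_{1} \sqcup \cdots \sqcup I_{d}$ associated to $\lambda$. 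Then $W_{H} = S_{m_{1}} \times \cdots \times S_{m_{d}} = S_{n}^{\lambda}$, and writing $y_{i} = -c_{1}(L_{\chi_{i}}) \in H^{2}(BT)$ as in \S\ref{subsubsec:ApplicationBressler-EvensFormula} (so that $\Delta^{+} = \{y_{i} - y_{j} : i < j\}$), the roots of $H$ are $\{y_{i} - y_{j} : n(i) = n(j)\}$, giving
\[
\Delta^{+} \setminus \Delta_{H}^{+} \; = \; \{\, y_{i} - y_{j} \,:\, 1 \leq i < j \leq n,\; n(i) < n(j)\,\},
\]
the inequality $n(i) < n(j)$ following from our convention that the intervals $I_{r}$ are enumerated in increasing order.

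Classify $E \to X$ by $h : X \to BU(n)$; the associated $H$-reduction provides a lift $\tilde h : \F\ell^{\lambda}(E) \to BH$ making
\[
\begin{CD}
\F\ell^{\lambda}(E) @>{\tilde h}>> BH \\
@V{\eta_{\lambda}}VV @VV{\sigma \, = \, \rho(H, G)}V \\
X @>{h}>> BU(n)
\end{CD}
\]
a pullback square. Any $S_{n}^{\lambda}$-invariant polynomial $f(x_{1}, \ldots, x_{n})$ is a polynomial in the Chern classes of the tautological subquotients on $\F\ell^{\lambda}(E)$ and thus lifts via $\tilde h^{*}$ to some $\tilde f \in H^{*}(BH)$. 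The base-change property of Gysin maps gives $(\eta_{\lambda})_{*}(f) = h^{*}(\sigma_{*}(\tilde f))$; pulling back further to $\F\ell(E)$ by the full-flag classifying lift $\bar h : \F\ell(E) \to BT$ (which factors as $\tilde h \circ \tau' = \pi \circ \bar h$, with $\pi = \rho(T, H)$ and $\tau' : \F\ell(E) \to \F\ell^{\lambda}(E)$) produces
\[
\tau^{*}((\eta_{\lambda})_{*}(f)) \; = \; \bar h^{*}\bigl(\pi^{*} \circ \sigma^{*} \circ \sigma_{*}(\tilde f)\bigr).
\]
Applying Corollary~\ref{cor:Bressler-EvensThm1.8Cor} (in ordinary cohomology, where $c_{1}(L_{-\alpha}) = \alpha$) to the right-hand side and identifying $\bar h^{*}(y_{i})$ with the Chern roots of $E$ yields
\[
\tau^{*}((\eta_{\lambda})_{*}(f)) \; = \; \sum_{\overline w \in S_{n}/S_{n}^{\lambda}} w \cdot \left[\, \frac{f(x_{1}, \ldots, x_{n})}{\prod_{1 \leq i < j \leq n,\, n(i) < n(j)}(x_{i} - x_{j})}\, \right].
\]
The proposition then follows by injectivity of $\tau^{*} : H^{*}(X) \hookrightarrow H^{*}(\F\ell(E))$, which is a standard consequence of Leray-Hirsch for flag bundles.

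The proof is structurally mechanical, and the only genuinely delicate step is verifying, just as in the full-flag computation at the end of \S\ref{subsubsec:ApplicationBressler-EvensFormula}, that the reindexing $i \leftrightarrow n+1-i$ relating the classifying-space generators $y_{i}$ to the geometric Chern roots $x_{i}$ (forced by the tautological subbundle convention $L_{j} = S_{j}/S_{j-1}$ versus the quotient-bundle convention $x_{i} = c_{1}(L^{i})$) is compatible with the interval decomposition --- that is, that the index set $\{(i, j) : 1 \leq i < j \leq n,\, n(i) < n(j)\}$ is preserved --- so that the denominator in Corollary~\ref{cor:Bressler-EvensThm1.8Cor} pulls back to exactly $\prod_{1 \leq i < j \leq n,\, n(i) < n(j)}(x_{i} - x_{j})$, rather than to a product indexed by some permuted subset. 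Granting this bookkeeping, the result drops out of Corollary~\ref{cor:Bressler-EvensThm1.8Cor} with no additional computation.
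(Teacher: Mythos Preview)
Your proof is correct and is precisely the argument the paper itself uses --- not for this proposition directly, which the paper simply attributes to Pragacz (who derives it from Brion's representation-theoretic result), but for its complex-cobordism generalization, Theorem~\ref{thm:Nakagawa-Naruse(GeneralFlagBundles)}. The paper draws the identical classifying-map square to $B(U(m_{1}) \times \cdots \times U(m_{d}))$ and invokes Corollary~\ref{cor:Bressler-EvensThm1.8Cor} together with base-change, exactly as you do; your argument is just the ordinary-cohomology specialization of that proof, and the reindexing bookkeeping you flag is the same as in the full-flag case of \S\ref{subsubsec:ApplicationBressler-EvensFormula}.
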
  
\noindent
From this,  Pragacz showed (implicitly) the following formula:  
\begin{cor}  \label{cor:Pragacz2015}    \label{cor:GysinFormulasH-LPolynomials}  
For the Gysin homomorphism   $(\eta_{\lambda})_{*}:  H^{*}(\mathcal{F} \ell^{\lambda} (E)) \longrightarrow H^{*}(X)$, 
the following formula holds$:$ 
\begin{equation*} 
     (\eta_{\lambda})_{*} \left ( \x^{\lambda}  \prod_{1 \leq i < j \leq n,  \; n(i) <  n(j) }  (x_{i} - tx_{j})  \right )  = P_{\lambda}(E; t),  
\end{equation*} 
where the cohomology class $P_{\lambda}(E; t)$ is defined from the Hall-Littlewood polynomial
$P_{\lambda}(x_{1}, \ldots, x_{n}; t)$ in the same way as $s_{\lambda}(E)$ at the beginning of 
this subsection.  
\end{cor}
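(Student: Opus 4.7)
The plan is to derive this corollary as a direct specialization of the Brion--Pragacz formula (Proposition \ref{prop:Brion-Pragacz}). First, I would introduce the polynomial
\[
f(X_{1}, \ldots, X_{n}) := \prod_{i=1}^{n} X_{i}^{\lambda_{i}} \cdot \prod_{\substack{1 \leq i < j \leq n \\ n(i) < n(j)}} (X_{i} - t X_{j})
\]
viewed inside $H^{*}(X)[t][X_{1}, \ldots, X_{n}]$, and check that it lies in the subring of $S_{n}^{\lambda}$-invariants. Since $S_{n}^{\lambda} = S_{m_{1}} \times \cdots \times S_{m_{d}}$ permutes only indices within a single block $I_{r}$ on which $\lambda$ is constant, the monomial $\prod_{i} X_{i}^{\lambda_{i}}$ is manifestly fixed; and every factor $(X_{i} - tX_{j})$ of the second product has $i$ and $j$ in distinct blocks, so permuting within one block merely relabels these factors while preserving the total product.

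Having verified invariance, I would apply Proposition \ref{prop:Brion-Pragacz} to $f$ to obtain
\[
(\eta_{\lambda})_{*}\!\left(\x^{\lambda} \prod_{\substack{1 \leq i < j \leq n \\ n(i) < n(j)}} (x_{i} - tx_{j})\right) \;=\; \sum_{\overline{w} \in S_{n}/S_{n}^{\lambda}} w \cdot \left[\x^{\lambda} \prod_{\substack{1 \leq i < j \leq n \\ n(i) < n(j)}} \frac{x_{i} - tx_{j}}{x_{i} - x_{j}}\right].
\]
The key recognition step is then to identify the right-hand side with a classical expression: the coset sum above is precisely Macdonald's presentation of the Hall--Littlewood polynomial $P_{\lambda}(x_{1}, \ldots, x_{n}; t)$ (\cite{Macdonald1995}, Chapter III, \S 2, (2.2)), where the condition $\lambda_{i} > \lambda_{j}$ appearing there is equivalent to $n(i) < n(j)$, since $\lambda$ is non-increasing and constant on each block $I_{r}$.

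Finally, I would invoke the splitting principle to conclude: $P_{\lambda}(x_{1}, \ldots, x_{n}; t)$ is symmetric in the Chern roots of $E$, so it may be expressed as a polynomial in the elementary symmetric polynomials $e_{i}(x_{1}, \ldots, x_{n}) = c_{i}(E)$; this polynomial in the Chern classes is by definition the cohomology class $P_{\lambda}(E; t) \in H^{*}(X)$, exactly as $s_{\lambda}(E)$ was defined at the start of \S \ref{subsec:GysinFormulasVariousSchurFunctions}. I do not anticipate a genuine obstacle here; the only mild subtlety is bookkeeping about where the identity is taking place, namely ensuring that both sides are compared in a single cohomology ring via the injective pullbacks $\tau^{*}: H^{*}(X) \hooklongrightarrow H^{*}(\F \ell(E))$ and $H^{*}(\F \ell^{\lambda}(E)) \hooklongrightarrow H^{*}(\F \ell(E))$, so that the Brion--Pragacz output can be legitimately read as an identity in $H^{*}(X)$.
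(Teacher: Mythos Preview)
Your proposal is correct and follows exactly the route the paper indicates: the paper does not spell out a proof but simply writes ``From this, Pragacz showed (implicitly) the following formula'', where ``this'' is Proposition~\ref{prop:Brion-Pragacz}. Your argument supplies precisely the missing details---the $S_n^{\lambda}$-invariance check, the application of the symmetrizing-operator description of $(\eta_{\lambda})_{*}$, and the identification of the resulting coset sum with Macdonald's Hall--Littlewood polynomial---so there is nothing to add.
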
  

By Corollary \ref{cor:GysinFormulasH-LPolynomials}, one can deduce 
Gysin formula for Schur $P$-polynomials (see Pragacz \cite[Examples 2  and 11, Corollary 6]{Pragacz2015}):  
\begin{cor}   [Pragacz \cite{Pragacz2015}, Corollary 6] 
In the setting as in Example $\ref{ex:PartialFlagBundles}$ $\mathrm{(1)}$, 
the following formula holds$:$
\begin{equation*} 
    (\tau^{k})_{*}  \left  (
                                            \x^{\nu}  \prod_{1 \leq i \leq k, \;   1 \leq i < j \leq n}   
                                              (x_{i}  -  t x_{j})
                      \right )  
        =  P_{\nu} (E; t).  
\end{equation*} 
\end{cor}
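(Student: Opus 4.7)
The plan is to obtain this corollary as an immediate specialization of Corollary \ref{cor:GysinFormulasH-LPolynomials}, by taking the partition $\lambda$ there to be the strict partition $\nu = (\nu_1, \ldots, \nu_k)$ viewed as an element of $\mathcal{P}_n$ (with $n-k$ trailing zeros). The entire argument is a matter of identifying, on both sides of the formula in Corollary \ref{cor:GysinFormulasH-LPolynomials}, the data attached to the strict partition $\nu$ with the data appearing in the target statement.

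First, I would invoke Example \ref{ex:PartialFlagBundles}(1): for $\nu \in \mathcal{SP}_n$ with $\ell(\nu) = k$, one has the decomposition $[n] = I_1 \sqcup \cdots \sqcup I_k \sqcup I_{k+1}$ with $I_r = \{r\}$ for $r \leq k$ and $I_{k+1} = \{k+1, \ldots, n\}$, so $(m_1, \ldots, m_{k+1}) = (1, \ldots, 1, n-k)$ and the stabilizer $S_n^\nu = (S_1)^k \times S_{n-k}$. Hence the flag bundle $\eta_\nu : \F\ell^\nu(E) \to X$ coincides with $\tau^k : \F\ell^{k, k-1, \ldots, 2, 1}(E) \to X$.

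Next, I would rewrite the indexing condition $\{1 \leq i < j \leq n,\ n(i) < n(j)\}$ concretely. Since $n(i) = i$ for $i \leq k$ and $n(i) = k+1$ for $i > k$, the condition $n(i) < n(j)$ with $i < j$ holds precisely when $i \leq k$: indeed, if $i \leq k$ then $n(i) = i < n(j)$ automatically (whether $j \leq k$ or $j > k$), while if $i > k$ then $n(i) = n(j) = k+1$, so the inequality fails. Thus the product
\[
\prod_{\substack{1 \leq i < j \leq n \\ n(i) < n(j)}} (x_i - t x_j) \;=\; \prod_{\substack{1 \leq i \leq k \\ i < j \leq n}} (x_i - t x_j),
\]
which matches the product appearing in the statement.

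Finally, since $\nu$ has length $k$, $\x^\nu = x_1^{\nu_1} x_2^{\nu_2} \cdots x_k^{\nu_k}$, so the push-forward identity of Corollary \ref{cor:GysinFormulasH-LPolynomials} becomes
\[
(\tau^k)_* \!\left( \x^\nu \prod_{\substack{1 \leq i \leq k \\ 1 \leq i < j \leq n}} (x_i - t x_j) \right) = P_\nu(E; t),
\]
which is exactly the desired formula. There is no genuine obstacle here; the only point requiring attention is the combinatorial identification of the index set for the product in the strict-partition case, which is straightforward once the interval decomposition from Example \ref{ex:PartialFlagBundles}(1) is in hand.
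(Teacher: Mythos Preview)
Your proof is correct and follows exactly the approach indicated in the paper: the corollary is deduced from Corollary~\ref{cor:GysinFormulasH-LPolynomials} by specializing to a strict partition $\nu$, using the interval decomposition from Example~\ref{ex:PartialFlagBundles}(1) to identify $\eta_\nu$ with $\tau^k$ and to rewrite the product condition $n(i) < n(j)$ as $i \leq k$. The paper does not spell out these details, so your write-up is in fact more explicit than the original.
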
 
\noindent
Since we know that  $P_{\nu}(\x_{n}; -1) =  P_{\nu}(\x_{n})$ (see (\ref{eqn:Hall-LittlewoodSchurP})),  
we obtain the following: 
\begin{cor}    \label{cor:GysinFormulaSchurP-Polynomial}  
With the above notation,  the following formula holds$:$
\begin{equation*} 
    (\tau^{k})_{*}  \left  (
                                            \x^{\nu}  \prod_{1 \leq i \leq k, \;   1 \leq i < j \leq n}   
                                              (x_{i} + x_{j})
                      \right )  
        =  P_{\nu} (E).  
\end{equation*} 
\end{cor}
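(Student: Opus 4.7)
The plan is to derive this as the $t=-1$ specialization of the preceding corollary, with the Schur $P$-identification following from Macdonald's classical identity $P_\nu(x_1,\ldots,x_n;-1) = P_\nu(x_1,\ldots,x_n)$ (equation~(\ref{eqn:Hall-LittlewoodSchurP}) in the paper).

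First I would invoke the corollary stated immediately above: in the setting of Example~\ref{ex:PartialFlagBundles}(1), with $\lambda = \nu$ a strict partition of length $k\le n$, the associated partial flag bundle $\eta_\nu$ is exactly $\tau^k : \F\ell^{k,k-1,\ldots,1}(E)\to X$, and the index set $\{(i,j) : 1\le i<j\le n,\, n(i)<n(j)\}$ reduces (because $m_1=\cdots=m_k=1$ and only $I_{k+1}=\{k+1,\ldots,n\}$ is non-singleton) to precisely $\{(i,j) : 1\le i\le k,\, 1\le i<j\le n\}$. Therefore the previous corollary gives
\[
(\tau^{k})_{*}\!\left(\mathbf{x}^{\nu}\!\!\prod_{\substack{1\le i\le k\\ i<j\le n}}(x_i - t x_j)\right) = P_\nu(E;t).
\]

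Next I would substitute $t=-1$. On the left-hand side, $x_i - (-1) x_j = x_i + x_j$, which yields exactly the expression appearing in the statement. On the right-hand side, by~(\ref{eqn:Hall-LittlewoodSchurP}) we have $P_\nu(x_1,\ldots,x_n;-1) = P_\nu(x_1,\ldots,x_n)$; since the cohomology classes $P_\nu(E;t)$ and $P_\nu(E)$ are defined from the respective symmetric polynomials in the Chern roots via the (injective) pullback $\tau^*\colon H^*(X)\hookrightarrow H^*(\F\ell(E))$, this identity of polynomials pulls back to the identity $P_\nu(E;-1) = P_\nu(E)$ of cohomology classes. Combining these gives the desired formula.

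There is essentially no obstacle here: the content is entirely specialization, and the only small point to verify is that the index range in the previous corollary collapses correctly for a strict partition, which follows at once from the combinatorial description of the intervals $I_r$ at the end of~\S\ref{subsec:UHLF}.
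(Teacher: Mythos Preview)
Your proof is correct and matches the paper's own argument exactly: the paper simply remarks that since $P_{\nu}(\x_{n};-1)=P_{\nu}(\x_{n})$ by~(\ref{eqn:Hall-LittlewoodSchurP}), the result follows from the preceding corollary by setting $t=-1$. Your additional verification that the index set collapses correctly for a strict partition is a nice clarification, but the paper treats this as already established in Example~\ref{ex:PartialFlagBundles}(1) and the end of \S\ref{subsec:UHLF}.
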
  


\subsection{Universal Gysin formulas for the universal Schur functions}  
\label{subsec:UGFUSF}  
\subsubsection{Application of  the Bressler-Evens formula}   \label{subsubsec:ApplicationBressler-EvensFormula(Cobordism)}    
Since the Bressler-Evens formula (Theorem \ref{thm:Bressler-EvensThm1.8}) 
is formulated in complex-oriented generalized cohomology theories, 
it can be  applied especially  to the complex cobordism theory $MU^{*}(-)$. 
We use the same notation as in \S \ref{subsubsec:GysinFormulasFl(E)->X}.  
For a complex vector bundle $E  \overset{p}{\longrightarrow}  X$ of rank $n$,  one can associate
the $MU^{*}$-theory Chern classes\footnote{
For the historical reason, they are also called {\it Conner-Floyd Chern classes} 
(see Adams \cite[Part I, \S4]{Adams1974}, Conner-Floyd \cite[Corollary 8.3]{Conner-Floyd1966}).   
}  $c^{MU}_{i}(E)  \in MU^{2i}(X) \; (i = 1, 2, \ldots, n)$ and $c^{MU}_{0}(E) := 1$.  
Put $x_{i} = x_{i}^{MU}  := c^{MU}_{1}(L^{i}) \in MU^{2}(\F \ell (E)) \; (i = 1, 2, \ldots, n)$
(the $MU^{*}$-theory Chern roots of $E$).  
Then the $MU^{*}$-cohomology of $\F \ell (E)$ is given as follows (see e.g., Hornbostel-Kiritchenko \cite[Theorem 2.6]{Hornbostel-Kiritchenko2011}): 
\begin{equation*} 
   MU^{*}(\F \ell (E))  = MU^{*}(X)[x_{1}, \ldots, x_{n}]/(\prod_{i=1}^{n}  (1 + x_{i})  = c^{MU}(E)), 
\end{equation*} 
where $c^{MU}(E) = \sum_{i=0}^{n}  c^{MU}_{i}(E)$ is the total Chern class of $E$.   
Then  we obtain the following result, which is  a {\it universal } analogue 
of Theorem \ref{thm:Fulton-Pragacztau^*tau_*}:  
\begin{theorem}      \label{thm:Nakagawa-Narusetau^*tau_*}  
With the same notation as in Theorem $\ref{thm:Fulton-Pragacztau^*tau_*}$,  we have 
\begin{equation*} 
      \tau^{*} \circ  \tau_{*}(f(x_{1}, \ldots, x_{n})) 
      =  \sum_{w \in S_{n}}  w \cdot 
                          \left [   
                                      \dfrac{f (x_{1}, \ldots, x_{n})} 
                                          { \prod_{1 \leq i < j \leq n} (x_{i}  +_{\L}   \overline{x}_{j})} 
                          \right ]   
\end{equation*}  
for a polynomial $f(X_{1}, \ldots, X_{n}) \in MU^{*}(X)[X_{1}, \ldots, X_{n}]$.  
\end{theorem}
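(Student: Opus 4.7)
The plan is to follow the cohomological derivation in \S\ref{subsubsec:ApplicationBressler-EvensFormula} essentially verbatim, replacing the classical Bressler-Evens identity by its universal analogue Theorem~\ref{thm:Bressler-EvensThm1.8}. Its torsion-freeness hypothesis is satisfied because $MU^{*} \cong \L$ is a polynomial ring over $\Z$, and the only substantive change is notational: the ordinary differences $x_{i}-x_{j}$ are replaced by the formal differences $x_{i} +_{\L} \overline{x}_{j}$.

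First I would invoke the classifying map $h : X \longrightarrow BU(n)$ of $E$ together with its lift $\tilde{h} : \F \ell (E) \longrightarrow BT^{n}$, fitting into the same pullback square as in \S\ref{subsubsec:ApplicationBressler-EvensFormula} whose right column is the Borel fibration $\rho : BT^{n} \longrightarrow BU(n)$. The base-change property of Gysin maps in $MU^{*}$ then gives
\begin{equation*}
\tau^{*} \circ \tau_{*} \circ \tilde{h}^{*} \; = \; \tilde{h}^{*} \circ \rho^{*} \circ \rho_{*}.
\end{equation*}
By the projection formula and $MU^{*}(X)$-linearity of $\tau_{*}$, it suffices to establish the identity when $f(x_{1},\ldots,x_{n}) = x^{\alpha}$ is a single monomial, since the coefficients in $\tau^{*}(MU^{*}(X))$ can be pulled inside the Weyl-symmetrizer. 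Writing $x^{\alpha} = \tilde{h}^{*}(z^{\bar{\alpha}})$ for a suitably reindexed monomial on $BT^{n}$ then reduces everything to computing $\rho^{*} \circ \rho_{*}(z^{\bar{\alpha}})$.

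On $BT^{n}$ set $z_{i} := c^{MU}_{1}(L_{\chi_{i}})$, where $\chi_{i}$ is the $i$-th canonical character of $T^{n}$; the positive roots of $U(n)$ are the $\alpha_{ij} = \chi_{i}\chi_{j}^{-1}$ for $i<j$. Because $L_{\chi\cdot\psi} \cong L_{\chi} \otimes L_{\psi}$ and $c^{MU}_{1}$ converts tensor products into formal sums,
\begin{equation*}
c^{MU}_{1}(L_{-\alpha_{ij}}) \; = \; c^{MU}_{1}(L_{\chi_{j}} \otimes L_{\chi_{i}}^{\vee}) \; = \; z_{j} +_{\L} \overline{z}_{i}.
\end{equation*}
On $\F \ell (E)$ the identifications $L_{i} \cong L_{\chi_{i}}$ and $L^{i} = L_{n+1-i}$ recorded in \S\ref{subsubsec:ApplicationBressler-EvensFormula} give $\tilde{h}^{*}(z_{i}) = x_{n+1-i}$; under this order-reversal the denominator $\prod_{i<j}(z_{j} +_{\L} \overline{z}_{i})$ pulls back to $\prod_{1 \leq i < j \leq n}(x_{i} +_{\L} \overline{x}_{j})$. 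Plugging this into Theorem~\ref{thm:Bressler-EvensThm1.8} and noting that the sum over $W_{U(n)} = S_{n}$ is invariant under conjugation by the longest element (which implements the reindexing $i \leftrightarrow n+1-i$) produces the claimed formula after applying $\tilde{h}^{*}$.

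The main obstacle is purely a bookkeeping one: one must verify that the order-reversal $z_{i} \leftrightarrow x_{n+1-i}$ commutes appropriately with the $S_{n}$-action and does not introduce spurious signs in the formal-group denominator. Once this is tracked the argument is entirely parallel to the ordinary-cohomology computation in \S\ref{subsubsec:ApplicationBressler-EvensFormula}, with $F_{\L}$ in place of the additive law.
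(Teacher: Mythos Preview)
Your proposal is correct and follows essentially the same approach as the paper. The paper does not spell out a separate proof of this theorem; it simply observes that the Bressler--Evens formula (Theorem~\ref{thm:Bressler-EvensThm1.8}) is stated for arbitrary complex-oriented theories with torsion-free coefficient ring, so the argument of \S\ref{subsubsec:ApplicationBressler-EvensFormula} goes through verbatim for $MU^{*}$ with $x_{i}-x_{j}$ replaced by $x_{i} +_{\L} \overline{x}_{j}$ --- exactly the derivation you outline, including the classifying square, base-change, and the reindexing via the longest element of $S_{n}$.
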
 
\noindent
Thus the Gysin map $\tau_{*}$ in the complex cobordism theory is  also given by 
a certain symmetrizing operator  which is a  universal  analogue of 
the Jacobi symmetrizer  (see Theorem \ref{thm:Fulton-Pragacztau^*tau_*}).

By Theorem \ref{thm:Nakagawa-Narusetau^*tau_*}  and
(\ref{eqn:Definitions^L(x_n|b)}),  
we obtain the following corollary,  which is a universal analogue of 
Proposition \ref{prop:Jacobi-TrudiIdentity}:  
\begin{cor} [Characterization of the universal Schur functions]  \label{cor:Nakagawa-Naruse(CharacterizationUSF)}   
 The image of the monomial  
                    $ {\bf x}^{\lambda + \rho_{n-1}} = x_{1}^{\lambda_{1}+n-1} x_{2}^{\lambda_{2} + n-2} \cdots x_{n}^{\lambda_{n}}$
  under the Gysin homomorphism $\tau_{*}:  MU^{*}(\mathcal{F} \ell (E))  
         \longrightarrow MU^{*}(X)$ is given by 
 \begin{equation*}  \label{eqn:GysinFormula2} 
       \tau_{*}({\bf x}^{\lambda + \rho_{n-1}} )
        =  s^{\L}_{\lambda} (E).         
       \end{equation*}  
Here the characteristic class $s^{\L}_{\lambda}(E)  \in  MU^{2|\lambda|} (X)$ 
is defined by the same  manner as $s_{\lambda} (E) \in H^{2|\lambda|}(X)$.  
\end{cor}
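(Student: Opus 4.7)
The plan is to derive this corollary as a direct consequence of Theorem \ref{thm:Nakagawa-Narusetau^*tau_*} together with the definition of the universal Schur function in (\ref{eqn:Definitions^L(x_n)}). First I would apply the symmetrizer formula of Theorem \ref{thm:Nakagawa-Narusetau^*tau_*} to the specific polynomial $f(X_1,\ldots,X_n) = X_1^{\lambda_1+n-1} X_2^{\lambda_2+n-2} \cdots X_n^{\lambda_n}$, obtaining
\[
\tau^{*}\circ\tau_{*}(\mathbf{x}^{\lambda+\rho_{n-1}}) \;=\; \sum_{w\in S_{n}} w\cdot\left[\frac{\mathbf{x}^{\lambda+\rho_{n-1}}}{\prod_{1\leq i<j\leq n}(x_{i}+_{\L}\overline{x}_{j})}\right].
\]

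Next I would simply recognize the right-hand side. By the definition of the universal factorial Schur function (\ref{eqn:Definitions^L(x_n|b)}), specializing the parameters $\mathbf{b}=\mathbf{0}$ collapses $[\mathbf{x}|\mathbf{b}]^{\lambda+\rho_{n-1}}$ to the ordinary monomial $\mathbf{x}^{\lambda+\rho_{n-1}}$, so the displayed sum is exactly $s^{\L}_{\lambda}(\mathbf{x}_{n})$ as defined in (\ref{eqn:Definitions^L(x_n)}). Thus $\tau^{*}\circ\tau_{*}(\mathbf{x}^{\lambda+\rho_{n-1}}) = s^{\L}_{\lambda}(x_{1},\ldots,x_{n})$.

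To finish, I need to define the class $s^{\L}_{\lambda}(E) \in MU^{2|\lambda|}(X)$ and invoke injectivity of $\tau^{*}$. The point is that $s^{\L}_{\lambda}(x_{1},\ldots,x_{n})$ is a symmetric formal power series in the $MU^{*}$-theory Chern roots of $E$, so by the universal formulation of the splitting principle in complex cobordism (which follows from the presentation $MU^{*}(\mathcal{F}\ell(E)) = MU^{*}(X)[x_{1},\ldots,x_{n}]/(\prod_{i}(1+x_{i}) = c^{MU}(E))$ recalled just before Theorem \ref{thm:Nakagawa-Narusetau^*tau_*}), it descends to a well-defined class $s^{\L}_{\lambda}(E) \in MU^{2|\lambda|}(X)$ with $\tau^{*}(s^{\L}_{\lambda}(E)) = s^{\L}_{\lambda}(x_{1},\ldots,x_{n})$. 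Combining with the computation above yields $\tau^{*}(s^{\L}_{\lambda}(E)) = \tau^{*}\circ\tau_{*}(\mathbf{x}^{\lambda+\rho_{n-1}})$, and the injectivity of $\tau^{*}$ on $MU^{*}(X)$ (which follows from the same presentation, since $MU^{*}(\mathcal{F}\ell(E))$ is a free $MU^{*}(X)$-module) gives the claimed equality $\tau_{*}(\mathbf{x}^{\lambda+\rho_{n-1}}) = s^{\L}_{\lambda}(E)$.

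Almost nothing here is a real obstacle once Theorem \ref{thm:Nakagawa-Narusetau^*tau_*} is in hand. The only point that deserves care is the well-definedness of $s^{\L}_{\lambda}(E)$: in contrast with the ordinary cohomology setting, $s^{\L}_{\lambda}(\mathbf{x}_{n})$ is a formal power series rather than a polynomial, so one must confirm that its symmetry under $S_{n}$ allows it to be expressed in the $MU^{*}$-theory Chern classes of $E$ and that the resulting class lies in a meaningful (e.g. completed or graded) submodule of $MU^{*}(X)$. This is handled by the standard formal-power-series extension of the splitting principle for complex cobordism; once that is granted, the injectivity of $\tau^{*}$ closes the argument with no further computation.
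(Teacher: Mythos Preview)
Your proof is correct and follows essentially the same route as the paper: the paper simply states that the corollary follows from Theorem~\ref{thm:Nakagawa-Narusetau^*tau_*} together with the definition~(\ref{eqn:Definitions^L(x_n|b)}), which is exactly the argument you have written out in detail. Your additional care about the well-definedness of $s^{\L}_{\lambda}(E)$ and the injectivity of $\tau^{*}$ makes explicit what the paper leaves implicit (cf.\ Remark~\ref{rem:tau_*(x^{lambda+rho_{n-1}})}).
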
 
\begin{rem}      \label{rem:tau_*(x^{lambda+rho_{n-1}})}  
Since $\tau^{*} (s^{\L}_{\lambda}(E))  = s^{\L}_{\lambda}(\x_{n}) \in MU^{*}(\F \ell (E))$, 
and $\tau^{*}$ is injective, the above formula can be written as 
\begin{equation*} 
         \tau_{*} (\x^{\lambda + \rho_{n-1}})   = s^{\L}_{\lambda}(\x_{n}). 
\end{equation*} 
More generally,  the symmetrizing operator description of $\tau_{*}$ $($Theorem $\ref{thm:Nakagawa-Narusetau^*tau_*})$ yields formally  the following formula$:$ 
\begin{equation}   \label{eqn:tau_*([x|b]^{lambda+rho_{n-1}})}   
      \tau_{*}([\x|\b]^{\lambda + \rho_{n-1}})  =  s^{\L}_{\lambda}(\x_{n}|\b). 
\end{equation} 
Here   $\b = (b_{1}, b_{2}, \ldots)$ is  a sequence of 
certain elements in $MU^{*}(X)$, which behaves as scalars with respect to 
$\tau_{*}$.  
\end{rem}

The universal analogue of Proposition \ref{prop:Brion-Pragacz}  
can also be obtained.  Under the same setting  as in 
Proposition \ref{prop:Brion-Pragacz},    we have a  classifying map  $h: X \longrightarrow BU(n)$
and its lift $\tilde{h}:  \F \ell^{\lambda}(E) \longrightarrow B(U(m_{1}) \times U(m_{2}) \times \cdots \times U(m_{d}))$,  
 and   the following diagram is commutative: 
\begin{equation*} 
 \begin{CD} 
          \mathcal{F}  \ell^{\lambda} (E)   @>{\tilde{h}}>>       B(U(m_{1}) \times U(m_{2}) \times \cdots \times U(m_{d}))  \\
                @V{\eta_{\lambda}}VV                                 @VV{\sigma}V \\
            X                        @>{h}>>                 BU(n).  \\
\end{CD} 
\end{equation*} 
By Corollary \ref{cor:Bressler-EvensThm1.8Cor} and the base-change property of 
Gysin maps, we obtain immediately the 
following generalization of  Proposition \ref{prop:Brion-Pragacz}:    
\begin{theorem}    \label{thm:Nakagawa-Naruse(GeneralFlagBundles)}  
For  an $S_{n}^{\lambda}$-invariant polynomial  $f (X_{1}, \ldots, X_{n})  
 \in  MU^{*}(X)[X_{1}, \ldots,  X_{n}]^{S_{n}^{\lambda}}$,  we have
\begin{equation*} 
   (\eta_{\lambda})_{*} (f(x_{1}, \ldots, x_{n}) ) 
=   \sum_{\overline{w} \in S_{n}/S_{n}^{\lambda}}  
                    w \cdot  \left [   
                                      \dfrac{f (x_{1}, \ldots, x_{n})} 
                                 {\prod_{1 \leq i < j \leq n, \; n(i) <  n(j) } (x_{i} +_{\L} \overline{x}_{j}) }
                               \right ].   
\end{equation*}  
\end{theorem}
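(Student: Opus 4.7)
The plan is to imitate the argument of Section \ref{subsubsec:ApplicationBressler-EvensFormula}, replacing Theorem \ref{thm:Bressler-EvensThm1.8} by its partial-flag version Corollary \ref{cor:Bressler-EvensThm1.8Cor}, and using the universal complex orientation so that every occurrence of $y_i - y_j$ is upgraded to its formal-group analogue $x_i +_{\L} \overline{x}_j$. First I set up the geometry: put $G = U(n)$ and $H = U(m_1) \times \cdots \times U(m_d)$, so that the fiber of $\eta_\lambda$ is $G/H$, and $W_G/W_H = S_n/S_n^{\lambda}$. The classification theorem of principal $U(n)$-bundles gives a classifying map $h : X \longrightarrow BG$ for $E$ together with a lift $\tilde h : \F \ell^{\lambda}(E) \longrightarrow BH$, and the square
\begin{equation*}
\begin{CD}
\F \ell^{\lambda}(E) @>\tilde h>> BH \\
@V{\eta_\lambda}VV @VV{\sigma = \rho(H,G)}V \\
X @>h>> BG
\end{CD}
\end{equation*}
is a pullback. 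A further lift to $BT^n$ via $\F \ell (E) \longrightarrow BT^n$, as in the cobordism case of Section \ref{subsubsec:ApplicationBressler-EvensFormula}, identifies the universal $MU^*$-theory Chern roots $x_i$ on $\F \ell (E)$ with (signed, reindexed) pullbacks of the tautological root generators $y_i \in MU^2(BT^n)$. Under this identification the positive roots of $G$ are indexed by pairs $1 \leq i < j \leq n$, and the positive roots of $H$ are precisely those pairs with $n(i) = n(j)$; hence
\begin{equation*}
\Delta^+ \setminus \Delta_H^+ \; \longleftrightarrow \; \{ (i,j) : 1 \leq i < j \leq n,\; n(i) < n(j) \}.
\end{equation*}

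Second, I rewrite the input. Since $f(X_1,\ldots,X_n)$ is $S_n^{\lambda}$-invariant and the symmetrized Chern polynomials in the $y_i$'s surject onto $MU^*(BH)$, there exists $F \in MU^*(BH)$ with $\tilde h^*(F) = f(x_1,\ldots,x_n)$ (interpreted in $MU^*(\F \ell^{\lambda}(E)) \hookrightarrow MU^*(\F \ell (E))$). By the base-change axiom applied to the pullback square,
\begin{equation*}
(\eta_\lambda)_*(f(x_1,\ldots,x_n)) \; = \; (\eta_\lambda)_* \circ \tilde h^*(F) \; = \; h^* \circ \sigma_*(F).
\end{equation*}

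Third, I invoke Corollary \ref{cor:Bressler-EvensThm1.8Cor}. Pulling back via $\eta_\lambda^*$ and then via the map $\F \ell (E) \longrightarrow \F \ell^{\lambda}(E)$ (so that we land in the ring where the Chern roots $x_i$ live and where the $S_n$-action is defined), Corollary \ref{cor:Bressler-EvensThm1.8Cor} yields
\begin{equation*}
\pi^* \circ \sigma^* \circ \sigma_*(F) \; = \; \sum_{\overline w \in W_G / W_H} w \cdot \left[ \frac{\pi^*(F)}{\prod_{\alpha \in \Delta^+ \setminus \Delta_H^+} c^{MU}_1(L_{-\alpha})} \right],
\end{equation*}
where $\pi = \rho(T,H) : BT^n \longrightarrow BH$. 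After translating back via the Chern-root identification above, the right-hand side becomes exactly
\begin{equation*}
\sum_{\overline w \in S_n / S_n^{\lambda}} w \cdot \left[ \frac{f(x_1,\ldots,x_n)}{\prod_{1 \leq i<j \leq n,\; n(i)<n(j)} (x_i +_{\L} \overline x_j)} \right].
\end{equation*}
This computes $\eta_\lambda^* \circ (\eta_\lambda)_* (f(x_1,\ldots,x_n))$ pulled back to $\F \ell (E)$, and since this composite pullback map from $MU^*(X)$ into $MU^*(\F \ell (E))$ is injective (Leray--Hirsch for complex-oriented theories applied to the full flag bundle), the $\eta_\lambda^*$ can be stripped off to give the claimed formula.

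The only genuine obstacle I foresee is bookkeeping: tracking the reindexing $x_i \leftrightarrow -y_{n+1-i}$ between the topological root variables on $BT^n$ and the geometric Chern roots on $\F \ell (E)$, and verifying that the resulting relabeling of pairs $(i,j)$ interacts correctly with the condition $n(i) < n(j)$ and with the $S_n$-action on cosets $S_n/S_n^{\lambda}$. Once this is done (it is a pure combinatorial check, exactly parallel to the one performed in Section \ref{subsubsec:ApplicationBressler-EvensFormula} for the cohomology case), the remainder is a routine combination of the pullback square with Corollary \ref{cor:Bressler-EvensThm1.8Cor}.
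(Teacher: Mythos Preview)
Your proposal is correct and follows essentially the same approach as the paper: set up the pullback square with classifying map $h: X \to BU(n)$ and lift $\tilde h: \F \ell^{\lambda}(E) \to B(U(m_1) \times \cdots \times U(m_d))$, then combine Corollary~\ref{cor:Bressler-EvensThm1.8Cor} with the base-change property of Gysin maps. The paper's own proof is in fact just a one-line invocation of exactly these two ingredients; your version spells out the root identification and the injectivity step more carefully, but the strategy is identical.
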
  
From this, we have the following corollary which is a universal analogue of 
Corollary \ref{cor:GysinFormulasH-LPolynomials}:  
\begin{cor} [Characterization of the universal Hall-Littlewood  functions]    \label{cor:Nakagawa-Naruse(CharacterizationUHLF)}  
For the Gysin homomorphism   $(\eta_{\lambda})_{*}:  MU^{*}(\mathcal{F} \ell^{\lambda} (E)) \longrightarrow MU^{*}(X)$, 
the following formula holds$:$ 
\begin{equation*} 
   (\eta_{\lambda})_{*} \left (  \x^{\lambda}   
     \prod_{1 \leq i < j \leq n, \;  n(i) <  n(j) }  (x_{i} +_{\L}  [t]_{\L} (\overline{x}_{j}))   \right )  
=  H^{\L}_{\lambda} (E; t).    
\end{equation*} 
\end{cor}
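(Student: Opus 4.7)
The plan is to derive Corollary \ref{cor:Nakagawa-Naruse(CharacterizationUHLF)} as an almost immediate consequence of Theorem \ref{thm:Nakagawa-Naruse(GeneralFlagBundles)}, once the argument of $(\eta_{\lambda})_{*}$ is matched with the numerator appearing in Definition \ref{df:DefinitionUHLF}. Concretely, I would set
$$F(\x_n;t) := \x^{\lambda} \prod_{\substack{1 \leq i < j \leq n \\ n(i) < n(j)}} \bigl(x_i +_{\L} [t]_{\L}(\overline{x}_j)\bigr)$$
and apply Theorem \ref{thm:Nakagawa-Naruse(GeneralFlagBundles)} with $f = F(\x_n;t)$.

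The first (and essentially only nontrivial) step is to verify that $F$ is $S_n^{\lambda}$-invariant, so that the hypothesis of Theorem \ref{thm:Nakagawa-Naruse(GeneralFlagBundles)} is met. The monomial $\x^{\lambda}$ is invariant because $\lambda$ is constant on each block $I_r$. For the product, I would observe that $S_n^{\lambda} = \prod_r S_{I_r}$ acts by permuting indices within their own block, and the block structure satisfies $\max I_r < \min I_{r+1}$; hence if $i<j$ with $n(i)<n(j)$, then for any $\sigma \in S_n^{\lambda}$ one still has $\sigma(i)<\sigma(j)$ with $n(\sigma(i))<n(\sigma(j))$. This means $\sigma$ merely relabels the factors of the product and thus fixes it, and crucially the non-symmetry of $x_i +_{\L}[t]_{\L}(\overline{x}_j)$ in $i$ and $j$ causes no problem because the ``earlier-block vs.\ later-block'' role of each variable is preserved.

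With invariance established, Theorem \ref{thm:Nakagawa-Naruse(GeneralFlagBundles)} immediately yields
$$(\eta_{\lambda})_{*}\bigl(F(\x_n;t)\bigr) = \sum_{\overline{w} \in S_n/S_n^{\lambda}} w \cdot \left[\x^{\lambda} \prod_{\substack{1 \leq i < j \leq n \\ n(i)<n(j)}} \frac{x_i +_{\L}[t]_{\L}(\overline{x}_j)}{x_i +_{\L} \overline{x}_j}\right],$$
which is exactly $H^{\L}_{\lambda}(\x_n;t)$ by Definition \ref{df:DefinitionUHLF}. One then identifies this $S_n$-symmetric expression in the Chern roots with the class $H^{\L}_{\lambda}(E;t) \in MU^{*}(X)$, defined analogously to $s^{\L}_{\lambda}(E)$ in Remark \ref{rem:tau_*(x^{lambda+rho_{n-1}})} through the injectivity of $\eta_{\lambda}^{*}: MU^{*}(X) \hookrightarrow MU^{*}(\mathcal{F}\ell^{\lambda}(E))$. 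The one minor technical point I anticipate is making sense of $x_i +_{\L}[t]_{\L}(\overline{x}_j)$ as a genuine element of $MU^{*}(\mathcal{F}\ell(E))[[t]]$, but this is harmless: Chern roots are topologically nilpotent and the formal series collapse to finite sums in any fixed total degree. Hence Theorem \ref{thm:Nakagawa-Naruse(GeneralFlagBundles)} and Definition \ref{df:DefinitionUHLF} together do all the work.
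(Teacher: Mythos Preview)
Your proposal is correct and takes essentially the same approach as the paper: the paper presents this corollary as an immediate consequence of Theorem~\ref{thm:Nakagawa-Naruse(GeneralFlagBundles)} (writing simply ``From this, we have the following corollary''), and you have faithfully carried out that deduction, supplying the $S_n^{\lambda}$-invariance check that the paper leaves implicit. Your observation about the invariance of the product under block-preserving permutations and the technical remark on the well-definedness of $[t]_{\L}(\overline{x}_j)$ are accurate and fill in exactly the details the paper omits.
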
  
For a strict partition $\nu  \in \mathcal{S P}_{n}$,  we saw that  $H^{\L}_{\nu}(\x_{n}; -1)  = P^{\L}_{\nu}(\x_{n})$ at the end of \S \ref{subsec:UHLF}.  From this and 
Corollary \ref{cor:Nakagawa-Naruse(CharacterizationUHLF)},  we obtain 
the following corollary, which is a universal analogue of Corollary \ref{cor:GysinFormulaSchurP-Polynomial}: 
\begin{cor} 
With the same notation as in Corollary $\ref{cor:GysinFormulaSchurP-Polynomial}$, 
we have 
\begin{equation*} 
    (\tau^{k})_{*}  \left  (
                                            \x^{\nu}  \prod_{1 \leq i \leq k, \;  1 \leq i < j \leq n}   
                                              (x_{i} +_{\L}   x_{j})
                      \right )  
        =  P^{\L}_{\nu} (E).  
\end{equation*} 
\end{cor}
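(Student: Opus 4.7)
The plan is to deduce this corollary as a direct specialization of Corollary \ref{cor:Nakagawa-Naruse(CharacterizationUHLF)} at $t=-1$, following exactly the pattern by which the cohomological Corollary \ref{cor:GysinFormulaSchurP-Polynomial} was obtained from Corollary \ref{cor:GysinFormulasH-LPolynomials} via the specialization $P_\nu(\x_n;-1)=P_\nu(\x_n)$ recorded in (\ref{eqn:Hall-LittlewoodSchurP}).

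First, I would fix the combinatorial data attached to a strict partition $\nu=(\nu_1,\dots,\nu_k)\in\mathcal{SP}_n$: as observed at the end of \S\ref{subsec:UHLF} (and in Example \ref{ex:PartialFlagBundles}(1)), the associated decomposition of $[n]$ is $I_1=\{1\},\dots,I_k=\{k\},\,I_{k+1}=\{k+1,\dots,n\}$, so $S_n^\nu=(S_1)^k\times S_{n-k}$ and the partial flag bundle $\eta_\nu:\F\ell^\nu(E)\to X$ coincides with $\tau^k:\F\ell^{k,k-1,\dots,1}(E)\to X$. Under this identification, the set of pairs $(i,j)$ with $1\le i<j\le n$ and $n(i)<n(j)$ becomes exactly $\{(i,j)\mid 1\le i\le k,\ i<j\le n\}$, which matches the index set in the statement to be proved.

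Next I would apply Corollary \ref{cor:Nakagawa-Naruse(CharacterizationUHLF)} to $\eta_\nu=\tau^k$ and specialize $t=-1$. The key identity is
\[
[-1]_{\L}(\overline{x}_j)=\ell_{\L}^{-1}\bigl(-\ell_{\L}(\overline{x}_j)\bigr)=\ell_{\L}^{-1}\bigl(\ell_{\L}(x_j)\bigr)=x_j,
\]
using the definition of $[t]_{\L}$ via the logarithm of $F_{\L}$ together with the fact that the formal inverse is involutive. Hence the factor $x_i+_{\L}[t]_{\L}(\overline{x}_j)$ at $t=-1$ becomes $x_i+_{\L}x_j$, turning the left-hand side of Corollary \ref{cor:Nakagawa-Naruse(CharacterizationUHLF)} into the expression appearing in the statement. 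On the right-hand side, I would invoke the identity
\[
H^{\L}_\nu(\x_n;-1)=P^{\L}_\nu(\x_n),
\]
established at the end of \S\ref{subsec:UHLF} by a direct comparison of (\ref{eqn:DefinitionUHLF}) with the definition (\ref{eqn:DefinitionP^L(x_n|b)Q^L(x_n|b)}) of $P^{\L}_\nu$. Passing from functions in $\x_n$ to the corresponding characteristic classes in $MU^*(X)$, this yields $H^{\L}_\nu(E;-1)=P^{\L}_\nu(E)$, completing the argument.

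There is essentially no obstacle here: the entire content is packaged into the Bressler--Evens-type computation already performed for Theorem \ref{thm:Nakagawa-Naruse(GeneralFlagBundles)} and Corollary \ref{cor:Nakagawa-Naruse(CharacterizationUHLF)}. The only point that deserves care is the verification that $[-1]_{\L}(\overline{x}_j)=x_j$ formally, so that the $t=-1$ specialization of the Hall--Littlewood factor really reproduces the formal sum $x_i+_{\L}x_j$; this is immediate from the logarithmic characterization of $[t]_{\L}$.
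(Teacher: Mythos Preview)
Your proposal is correct and follows essentially the same approach as the paper: the paper explicitly states that the corollary is obtained from Corollary \ref{cor:Nakagawa-Naruse(CharacterizationUHLF)} together with the identity $H^{\L}_{\nu}(\x_{n};-1)=P^{\L}_{\nu}(\x_{n})$ established at the end of \S\ref{subsec:UHLF}. Your additional verification that $[-1]_{\L}(\overline{x}_j)=x_j$ makes explicit a step the paper leaves implicit.
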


\subsubsection{Fel'dman's Gysin formula}     \label{subsubsec:Fel'dman'sGysinFormula}   
As promised in the footnote after Remark \ref{rem:Fel'dman'sGysinFormula}, 
we shall reformulate Fel'dman's Gysin formula \cite[Theorem 4.5]{Fel'dman2003}
in the complex cobordism theory, and  prove it.   To this end, 
we use the universal Schur function $s^{\L}_{I}(\x_{n})$ for a sequence 
of non-negative integers $I$ defined in \S \ref{subsec:UFSSPQF}.  
For two sequences $I = (I_{1}, I_{2}, \ldots, I_{q})$,  $J = (J_{1}, J_{2}, \ldots, J_{r})$
of non-negative integers,  
denote by $IJ$ their {\it juxtaposition}, i.e., 
\begin{equation*}  
   IJ := (I_{1}, I_{2}, \ldots, I_{q}, J_{1}, J_{2}, \ldots, J_{r}). 
\end{equation*} 

Let  $\lambda = (\lambda_{1}, \ldots, \lambda_{n})  \in \mathcal{P}_{n}$ be a partition 
of length $\leq n$,  and rewrite it as $\lambda =   (n_{1}^{m_{1}} \; n_{2}^{m_{2}} \; \cdots n_{d}^{m_{d}})$, 
$n_{1} > n_{2} >  \cdots > n_{d} \geq 0$,  according to the decomposition 
$[n] = I_{1} \sqcup I_{2} \sqcup \cdots \sqcup I_{d}$.   
Then  consider the $(d-1)$-step partial flag bundle
\begin{equation*} 
    \eta_{\lambda}:  \F \ell^{\lambda}(E)  \longrightarrow X
\end{equation*} 
associated with a complex vector bundle $E \longrightarrow X$ (for the notation, 
see \S  \ref{subsec:GysinFormulasVariousSchurFunctions}). 
We are concerned with the induced Gysin map in the complex cobordism theory: 
\begin{equation*} 
  (\eta_{\lambda})_{*}:   MU^{*}(\F \ell^{\lambda} (E))  \longrightarrow  MU^{*}(X). 
\end{equation*} 
On $\F \ell^{\lambda}(E)$,  we have the tautological sequence of flag 
of sub  and quotient bundles: 
\begin{equation*} 
\begin{array}{llll}  
  & S_{0} = 0  \hooklongrightarrow S_{1} \hooklongrightarrow  \cdots \hooklongrightarrow 
         S_{r} \hooklongrightarrow 
 \cdots \hooklongrightarrow S_{d-1}  \hooklongrightarrow S_{d} = \eta_{\lambda}^{*} (E),   \medskip \\
    &    \eta_{\lambda}^{*} (E) = Q^{d}  \twoheadlongrightarrow  Q^{d-1} \twoheadlongrightarrow 
       \cdots  \twoheadlongrightarrow Q^{r}  \twoheadlongrightarrow \cdots 
     \twoheadlongrightarrow Q^{1}  \twoheadlongrightarrow Q^{0} = 0,  \medskip 
\end{array}  
\end{equation*} 
where,  $Q^{r} :=  \eta_{\lambda}^{*} (E)/S_{d-r} \; (r = 1, 2, \ldots, d)$.   
Define the vector bundles over $\F \ell^{\lambda} (E)$ by 
\begin{equation*} 
    E_{r} := \Ker \, (Q^{r}   \twoheadlongrightarrow Q^{r-1})  \quad (r = 1, 2, \ldots, d). 
\end{equation*} 
Then   we have $\eta_{\lambda}^{*}(E)  \cong \bigoplus_{r=1}^{d} E_{r}$, and 
the $MU^{*}$-theory Chern roots of $E_{r}$ are given by $x_{\nu (r-1) + 1},  \ldots, x_{\nu (r)}$
($r = 1, 2, \ldots, d$). 
Here $x_{1}, \ldots, x_{n}$ are  the $MU^{*}$-theory Chern roots of $E$ (see the beginning of 
\S  \ref{subsubsec:ApplicationBressler-EvensFormula(Cobordism)}), and $\nu (r) := \sum_{i=1}^{r} m_{i}$
for $r = 1, 2, \ldots, d$ and $\nu (0) := 0$ (see the end of \S  \ref{subsec:UHLF}).  
With this notation,  our version of Fel'dman's Gysin formula is stated as follows: 
\begin{theorem} [cf. Fel'dman  \cite{Fel'dman2003}, Theorem 4.5]   
  For sequences of non-negative integers $I^{(r)} = (I^{(r)}_{1}, I^{(r)}_{2}, \ldots, I^{(r)}_{m_{r}})
\; (r = 1, 2, \ldots, d)$,   we have 
\begin{equation*} 
      (\eta_{\lambda})_{*} \left ( \prod_{r=1}^{d}  s^{\L}_{I^{(r)}  + ((n - \nu (r))^{m_{r}}) } (E_{r})   \right )    =  s^{\L}_{I^{(1)} I^{(2)} \cdots I^{(d)}} (E).  
\end{equation*} 
Here the partition $((n - \nu (r))^{m_{r}})$ means 
$( \underbrace{n - \nu (r),  n - \nu (r), \ldots, n - \nu (r)}_{m_{r}})$.  
\end{theorem}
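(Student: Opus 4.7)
The plan is to deduce the identity by factoring the Gysin map $\tau_*$ of the full flag bundle $\tau \colon \F\ell(E) \to X$ through the partial flag bundle $\eta_{\lambda}$, and then reducing everything to the full-flag formula $\tau_*(\x^{I + \rho_{n-1}}) = s^{\L}_{I}(E)$ that is recorded in Corollary~\ref{cor:Nakagawa-Naruse(CharacterizationUSF)} and Remark~\ref{rem:tau_*(x^{lambda+rho_{n-1}})} for an arbitrary sequence $I = (I_1, \ldots, I_n)$ of non-negative integers. First I would exhibit the factorization $\tau = \eta_{\lambda} \circ \pi$, where $\pi \colon \F\ell(E) \to \F\ell^{\lambda}(E)$ is the relative flag bundle; since $\eta_{\lambda}^{*}(E) \cong \bigoplus_{r=1}^{d} E_{r}$, taking the full flag of $\eta_{\lambda}^{*}(E)$ amounts to taking the full flag of each $E_{r}$ independently, so $\pi$ is the fibered product $\F\ell(E_{1}) \times_{\F\ell^{\lambda}(E)} \cdots \times_{\F\ell^{\lambda}(E)} \F\ell(E_{d})$. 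On $\F\ell(E)$, the pulled-back Chern roots of $E_{r}$ are precisely $x_{\nu(r-1)+1}, \ldots, x_{\nu(r)}$.

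Next I would compute $\pi_{*}$ on separated monomials. Applying Remark~\ref{rem:tau_*(x^{lambda+rho_{n-1}})} to the full flag bundle of each $E_{r}$ regarded as a rank-$m_{r}$ bundle over $\F\ell^{\lambda}(E)$, and iterating step-by-step using base change and the projection formula (the key point being that classes of the form $s^{\L}_{J^{(s)}}(E_{s})$ already live on $\F\ell^{\lambda}(E)$ and hence behave as scalars with respect to the remaining Gysin maps), for any sequences of non-negative integers $J^{(r)} = (J^{(r)}_{1}, \ldots, J^{(r)}_{m_{r}})$ one obtains
\begin{equation*}
\pi_{*}\!\left(\prod_{r=1}^{d} \prod_{i=1}^{m_{r}} x_{\nu(r-1)+i}^{\,J^{(r)}_{i} + m_{r} - i}\right) = \prod_{r=1}^{d} s^{\L}_{J^{(r)}}(E_{r}).
\end{equation*}

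Then I would choose $J^{(r)} := I^{(r)} + ((n-\nu(r))^{m_{r}})$ and verify the exponent bookkeeping: since $\nu(r) = \nu(r-1) + m_{r}$, the exponent of $x_{\nu(r-1)+i}$ becomes
\begin{equation*}
I^{(r)}_{i} + (n - \nu(r)) + m_{r} - i \; = \; I^{(r)}_{i} + n - \nu(r-1) - i,
\end{equation*}
and reindexing with $k = \nu(r-1) + i$ identifies this with the $k$-th component of the juxtaposed sequence $I^{(1)} I^{(2)} \cdots I^{(d)} + \rho_{n-1}$. Hence the monomial on the left-hand side above equals $\x^{I^{(1)} I^{(2)} \cdots I^{(d)} + \rho_{n-1}}$. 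Applying $(\eta_{\lambda})_{*}$ to both sides and invoking naturality $\tau_{*} = (\eta_{\lambda})_{*} \circ \pi_{*}$ together with Corollary~\ref{cor:Nakagawa-Naruse(CharacterizationUSF)} yields the claimed identity.

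The main obstacle will be the second step: while the fibered-product description of $\pi$ is geometrically transparent, justifying the multiplicative splitting of $\pi_{*}$ requires writing the map as a sequence of single-bundle full flag bundles and checking at each stage that the already-formed classes $s^{\L}_{J^{(s)}}(E_{s})$ pass through the remaining Gysin maps by the projection formula. Once this multiplicativity is established, the rest is the short combinatorial verification of exponents above, and the appeal to the already-proven formula of Corollary~\ref{cor:Nakagawa-Naruse(CharacterizationUSF)}.
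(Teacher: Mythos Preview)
Your argument is correct and rests on the same underlying idea as the paper's: factor the full flag bundle through the partial one, $\tau = \eta_{\lambda}\circ\pi$, and exploit that $\pi$ is the product of the full flag bundles of the blocks $E_{r}$. The organization differs, however. The paper reduces to the two-block (Grassmann) case and then manipulates the symmetrizing-operator description of Theorem~\ref{thm:Nakagawa-Naruse(GeneralFlagBundles)} directly: it expands $s^{\L}_{IJ}(\x_n)$ as a sum over $S_n$, splits the denominator and numerator into three pieces, pulls out the $S_q\times S_{n-q}$-invariant factor $(x_1\cdots x_q)^{n-q}/\prod_{i\le q<j}(x_i+_{\L}\overline{x}_j)$, and recognizes the inner double sum as $s^{\L}_I(x_1,\ldots,x_q)\,s^{\L}_J(x_{q+1},\ldots,x_n)$ and the outer sum as $\pi_*$; the shift by $((n-q)^{q})$ then comes from the identity $(x_1\cdots x_q)^{n-q}\cdot s^{\L}_I = s^{\L}_{I+((n-q)^q)}$. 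Your version avoids the reduction to $d=2$ and the explicit symmetrizer algebra: you instead iterate Corollary~\ref{cor:Nakagawa-Naruse(CharacterizationUSF)} over the blocks via the projection formula, do the exponent bookkeeping once, and appeal to naturality. Both routes are short; yours is a bit more geometric and handles all $d$ at once, while the paper's makes the role of the symmetrizing operators (and hence of Theorem~\ref{thm:Nakagawa-Naruse(GeneralFlagBundles)}) more visible.
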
 

\begin{proof} 
As in Fel'dman's proof,  it suffices to prove the assertion for the case of 
a Grassmann bundle. Therefore we may assume that 
the partition $\lambda$ is of the form 
$\lambda = (a^{q} \,  b^{n-q})$ with $a > b \geq 0$, and 
consider the associated Grassmann bundle $\pi:  G^{q}(E)  \longrightarrow X$ 
(see Example \ref{ex:PartialFlagBundles} (2)).  
On $G^{q}(E)$, we have the tautological exact sequence of vector bundles: 
\begin{equation*} 
    0  \longrightarrow S  \hooklongrightarrow  \pi^{*}(E) \twoheadlongrightarrow Q 
   \longrightarrow 0. 
\end{equation*} 
Note that   $E_{1} = Q$, and $E_{2} = \Ker \, (\pi^{*}(E)  \twoheadlongrightarrow Q) \cong S$.  
For given two sequences of non-negative integers $I = (I_{1}, I_{2}, \ldots, I_{q})$, 
$J = (J_{1}, J_{2}, \ldots, J_{n-q})$,  consider the universal Schur function 
corresponding  to the juxtaposition  $I   J = (I_{1},\ldots, I_{q}, J_{1}, \ldots, J_{n-q})$:  
\begin{equation*} 
   s^{\L}_{I J} (x_{1}, \ldots, x_{q}, x_{q+1}, \ldots, x_{n}) 
   =  \sum_{w \in S_{n}}  w \cdot \left [ 
                                       \dfrac{ \x^{IJ + \rho_{n-1}}  }  
                                               {\prod_{1 \leq i < j \leq n}  (x_{i} +_{\L} \overline{x}_{j}) } 
                                          \right ].  
\end{equation*} 
We decompose the denominator and the numerator  inside the bracket in the right-hand side as 
\begin{equation*} 
\begin{array}{llll} 
 &   \displaystyle{\prod_{1 \leq i < j \leq n}}   (x_{i} +_{\L} \overline{x}_{j}) 
   = \prod_{1 \leq i  < j \leq q} (x_{i} +_{\L} \overline{x}_{j})  \times 
     \prod_{q + 1 \leq i  < j \leq n}  (x_{i}  +_{\L} \overline{x}_{j})  \times 
     \prod_{1 \leq i \leq q,  \;  q + 1 \leq j \leq n}  (x_{i} +_{\L} \overline{x}_{j}),  \medskip \\ 
 &  \x^{IJ + \rho_{n-1}}   
 =  \displaystyle{\prod_{i=1}^{q}}  x_{i}^{I_{i} + n-i}  \times \prod_{i=q+1}^{n}  x_{i}^{J_{i-q} + n-i}   
 = (x_{1} \cdots x_{q})^{n-q}  \times \prod_{i=1}^{q}  x_{i}^{I_{i} + q - i}  \times 
      \prod_{i=q+1}^{n}  x_{i}^{J_{i-q}  + n-i}.  \medskip 
\end{array}    
\end{equation*} 
Since $\prod_{1 \leq i \leq q, \;  q + 1 \leq j  \leq n}  (x_{i} +_{\L} \overline{x}_{j})$
and  $(x_{1} \cdots x_{q})^{n-q}$  are both $S_{q} \times S_{n-q}$-invariant, 
we compute 
\begin{equation*} 
\begin{array}{llll} 
  &  s^{\L}_{I J} (x_{1}, \ldots, x_{q}, x_{q+1}, \ldots, x_{n}) 
     =   \displaystyle{\sum_{w \in S_{n}}}    w \cdot \left [ 
                                       \dfrac{ \x^{IJ + \rho_{n-1}}  }  
                                               {\prod_{1 \leq i < j \leq n}  (x_{i} +_{\L} \overline{x}_{j}) } 
                                          \right ]   \medskip \\
   & =   \displaystyle{\sum_{\overline{w} \in S_{n}/S_{q} \times S_{n-q}}}    
      w \cdot \left [  
                              \dfrac{(x_{1} \cdots x_{q})^{n-q}} 
                                      {\prod_{1 \leq i \leq q, \; q + 1 \leq j \leq n}  
                                                       (x_{i} +_{\L} \overline{x}_{j})  } 
                               \right.     \medskip \\
   & \hspace{3cm}   \times     \left.   \displaystyle{\sum_{(u, v)  \in  S_{q} \times S_{n-q}}}   
                                             u \cdot \left [   
                                                            \dfrac{ \prod_{i=1}^{q} x_{i}^{I_{i}+q-i}} 
                                                                     { \prod_{1 \leq i  < j \leq q} (x_{i} +_{\L} \overline{x}_{j})  }  
                                                          \right ] 
                                \times 
                                            v \cdot \left [ 
                                                                \dfrac{ \prod_{i=q+1}^{n} x_{i}^{J_{i-q} + n-i}}
                                                                            {\prod_{q + 1 \leq i  < j \leq n}  (x_{i}  +_{\L} \overline{x}_{j})   }
                                                       \right ]    
                  \right ]   \medskip   \\
& =  \displaystyle{\sum_{\overline{w} \in S_{n}/S_{q} \times S_{n-q}}}    
      w \cdot \left [   
            \dfrac{(x_{1} \cdots x_{q})^{n-q} 
                                s^{\L}_{I}(x_{1}, \ldots, x_{q})  s^{\L}_{J}(x_{q+1}, \ldots, x_{n}) }
                                       { \prod_{1 \leq i \leq q, \; q + 1 \leq j \leq n}  
                                                       (x_{i} +_{\L} \overline{x}_{j})   } 
                   \right ]    \medskip \\
   & = \pi_{*}  ((x_{1} \cdots x_{q})^{n-q}  s^{\L}_{I} (x_{1}, \ldots, x_{q})   \, s^{\L}_{J}(x_{q+1}, \ldots, x_{n}) )   \medskip   \\
   & = \pi_{*} ( s^{\L}_{I + ((n-q)^{q})}  (x_{1}, \ldots, x_{q})   \, s^{\L}_{J}(x_{q+1}, \ldots, x_{n})).  \medskip 
\end{array} 
\end{equation*} 
In the final step, we used the formula ``$(x_{1} \cdots x_{n})^{c}  \times s^{\L}_{I}(\x_{n})
=  s^{\L}_{I  + (c^{n})}  (\x_{n})$'' for any non-negative integer $c$, 
which follows immediately from the 
definition of $s^{\L}_{I}(\x_{n})$. 
Thus   we have 
\begin{equation*}     \label{eqn:pi_*(s^L_Is^L_J)}  
     \pi_{*} (s^{\L}_{I + ((n-q)^{q})}  (x_{1}, \ldots, x_{q})  s^{\L}_{J}(x_{q+1}, \ldots, x_{n}))
   =   s^{\L}_{I J}  (x_{1}, \ldots, x_{n}).     
\end{equation*} 
Rewriting this expression in terms of vector bundles, we have the desired 
formula:  
\begin{equation*} 
                 \pi_{*} (s^{\L}_{I + ((n-q)^{q})} (E_{1})  \,  s^{\L}_{J} (E_{2}))  = s^{\L}_{I J} (E).   
\end{equation*}
\end{proof}  

\section{New universal factorial Schur functions}     \label{sec:NUFSF}  
\subsection{Definition of the new universal factorial Schur functions}  
 In \S  \ref{subsec:UHLF},  we defined the universal Hall-Littlewood function
$H^{\L}_{\lambda}(\x_{n};t)$ for a partition $\lambda  \in  \mathcal{P}_{n}$.  
We saw that  for a strict partition $\nu \in \mathcal{S P}_{n}$, 
 this function reduces to the 
universal Schur $P$-function $P^{\L}_{\nu}(\x_{n})$  under the specialization $t = -1$.  
As announced in \S \ref{subsec:UHLF},  we  shall  consider  the specialization $t = 0$  in this subsection. 
In fact, unlike the usual Hall-Littlewood polynomial $P_{\lambda}(x_{1}, \ldots, x_{n}; t)$, 
the universal Hall-Littlewood function $H^{\L}_{\lambda}(\x_{n}; t)$
need not coincide with the universal Schur function $s^{\L}_{\lambda}(\x_{n})$ 
under the specialization $t = 0$.  Thus one obtains another {\it universal} 
analogue of the Schur polynomial, which we  call  the ``new universal 
(factorial) Schur function''.  
We shall use the same notation as in \S \ref{subsec:UHLF}. 
 Let $\lambda = (\lambda_{1}, \ldots, \lambda_{n})  \in \mathcal{P}_{n}$ be a 
 partition  of  length $\leq n$.  
Then we rewrite 
$\lambda = (n_{1}^{m_{1}} \;  n_{2}^{m_{2}} \;  \ldots \;  n_{d}^{m_{d}})$, $n_{1} > n_{2} > \cdots > n_{d} \geq 0$ as before.   
Put $\nu (r) = \sum_{i=1}^{r} m_{i}$ for $r = 1, 2, \ldots, d$  and $\nu (0) = 0$.  
Define 
\begin{equation}    \label{eqn:Monomial(NUFSF)}  
  (\x | \b)^{[\lambda]}  
   := \displaystyle{\prod_{r=1}^{d}}  \left ( \prod_{\nu (r-1) < i \leq \nu (r)}   [x_{i}|\b]^{n_{r} + n - \nu (r)}   \right ).  
\end{equation}  
When the parameters $\b = \bm{0}$, then we write  simply $\x^{[\lambda]}$ for 
$(\x|\bm{0})^{[\lambda]}$.   
With the above notation, we make the following definition:  
\begin{defn}[New universal factorial Schur functions]  \label{df:DefinitionNUFSF}  
For a partition $\lambda  = (\lambda_{1}, \ldots, \lambda_{n})  \\  \in \mathcal{P}_{n}$, 
we define  
\begin{equation*} 
     \mathbb{S}^{\L}_{\lambda} (\x_{n}|\b)  = \mathbb{S}^{\L}_{\lambda}(x_{1}, \ldots, x_{n}|\b)  
 := \sum_{\overline{w} \in S_{n}/S_{n}^{\lambda}}  w \cdot \left [  \dfrac{(\x|\b)^{[\lambda]} }{\prod_{
                       1 \leq i < j \leq n,  \; 
                            n(i)  <   n(j)   
          } (x_{i}  +_{\L} \overline{x}_{j})}    \right ]. 
\end{equation*}  
We also define 
\begin{equation*} 
          \mathbb{S}^{\L}_{\lambda}(\x_{n}) = \mathbb{S}^{\L}_{\lambda}(x_{1}, \ldots, x_{n}) 
         := \mathbb{S}^{\L}_{\lambda}(x_{1}, \ldots, x_{n}| \bm{0}).  
\end{equation*} 
\end{defn}
\noindent
It follows immediately from Definition  \ref{df:DefinitionNUFSF} and (\ref{eqn:DefinitionNUSF}) that 
when $t = 0$, the universal Hall-Littlewood function $H^{\L}_{\lambda}(\x_{n}; t)$ 
reduces to the new universal Schur function $\mathbb{S}^{\L}_{\lambda}(\x_{n})$.   
Moreover, 
one sees directly  from the definition that 
if we specialize the universal formal group law $F_{\L}(u, v)$ to the additive 
one $F_{a}(u, v) = u + v$ (and $b_{i}  = -a_{i} \; (i = 1, 2, \ldots)$),  
 then the new universal factorial Schur function 
$\s^{\L}_{\lambda}(\x_{n}|\b)$ reduces to the factorial Schur function 
$s_{\lambda}(\x_{n}|a)$ (see the proof of Macdonald \cite[Chapter III, \S 1,  (1.5)]{Macdonald1995}).   
On the other hand, it is not obvious from Definition \ref{df:DefinitionNUFSF} 
that $\s^{\L}_{\lambda}(\x_{n}|\b)$ reduces to the factorial Grothendieck 
polynomial $G_{\lambda}(\x_{n}|\b)$ under the specialization from 
$F_{\L}(u, v)$ to the multiplicative one $F_{m}(u, v) = u + v + \beta uv$. 
We shall show that this is the case after establishing  some Gysin formulas 
for the new universal factorial Schur functions (see \S \ref{subsec:GysinFormulasNUSF}).  
Thus the new universal factorial Schur functions
are  also   {\it universal} analogues of  the usual  Schur functions.  
Note that, by definition,  $\s^{\L}_{\emptyset} (\x_{n}|\b) = 1$ 
for   the empty partition $\emptyset$, whereas $s^{\L}_{\emptyset}(\x_{n}|\b) 
\neq 1$ (see  \S \ref{subsec:UFSSPQF}).

\begin{ex}      \label{ex:ExamplesNUFSF}  
\quad

\begin{enumerate} 
\item  Let $\lambda \in \mathcal{P}_{n}$ be a partition such that 
$\lambda_{1} > \lambda_{2} > \cdots > \lambda_{n-1} > \lambda_{n} \geq 0$. 
Then we have a decomposition $[n] = I_{1} \sqcup I_{2} \sqcup \cdots \sqcup I_{n}$, 
where $I_{i} = \{ i \}$. Therefore we have 
\begin{equation*} 
    (m_{1}, m_{2}, \ldots, m_{n}) = (\underbrace{1, 1, \ldots, 1}_{n})  \quad \text{and}   \quad 
    S^{\lambda}_{n}  =  \underbrace{S_{1} \times  \cdots  \times  S_{1}}_{n} = \{ 1 \}. 
\end{equation*} 
In this case,  one sees immediately from $(\ref{eqn:Monomial(NUFSF)})$ 
that $(\x |\b)^{[\lambda]} =   [\x |\b]^{\lambda + \rho_{n-1}}$,  and 
hence 
\begin{equation*} 
        s^{\L}_{\lambda}(\x_{n}|\b)       =  \s^{\L}_{\lambda}(\x_{n}|\b). 
\end{equation*} 
Thus for such a  distinct partition $\lambda$,  the  function $s^{\L}_{\lambda}(\x_{n}|\b)$ 
coincides with the   function $\s^{\L}_{\lambda}(\x_{n}|\b)$.  
However, for a partition with equal  parts,  the difference  does occur $($see the end of 
\S $\ref{subsec:GysinFormulasNUSF})$.

\item   
Let $k \geq 1$ be a positive integer, and 
consider the case where $\lambda$ is ``one-row'', namely  
$\lambda = (k)  = (k \; 0^{n-1})$.   Then we have a decomposition 
$[n] = I_{1} \sqcup I_{2}$, where $I_{1} = \{ 1 \}$,  $I_{2} = [2, n]  = \{ 2, \ldots, n \}$. 
Thus   we have 
\begin{equation*} 
    (m_{1}, m_{2}) = (1,  n-1)    \quad \text{and}  \quad   S^{\lambda}_{n}  = S_{1}  \times S_{n-1}, 
\end{equation*} 
and one sees immediately that $\x^{[\lambda]} = x_{1}^{k + n-1}$.   
Therefore the 
function $\s^{\L}_{k}(\x_{n}) := \s^{\L}_{(k)}(\x_{n})$ corresponding to 
the one-row $(k)$ is given by  
\begin{equation}   \label{eqn:NUSF(One-Row)}   
    \s^{\L}_{k}(\x_{n})  =  \sum_{\overline{w} \in S_{n}/S_{1} \times S_{n-1}}  
                  w \cdot \left [  
                                         \dfrac{x_{1}^{k + n-1}} 
                                           {\prod_{j=2}^{n} (x_{1}  +_{\L}   \overline{x}_{j}) } 
                              \right ]  
                         =  \sum_{i=1}^{n}   \dfrac{x_{i}^{k + n-1}}  
                                                          {\prod_{j \neq i} (x_{i} +_{\L} \overline{x}_{j}) }.  
\end{equation} 
 Here we assumed that $k \geq 1$ is a positive integer.  However, the right-hand side 
of $(\ref{eqn:NUSF(One-Row)})$ makes sense for  $k \geq 1-n$,  which could be  non-positive, 
or actually negative if $n \geq 2$.  Therefore one can formally define $\s^{\L}_{k}(\x_{n})$ 
for each integer $k \geq 1 -n$ by the right-hand side of the above expression. 
For instance, one has 
\begin{equation*} 
    \s^{\L}_{0}(\x_{n})  =  \sum_{i=1}^{n}  \dfrac{x_{i}^{n-1}}  
                                                             {\prod_{j \neq i} (x_{i} +_{\L} \overline{x}_{j})}, 
\end{equation*}  
which differs from $1$.\footnote{
Note that   $\s^{\L}_{\emptyset} (\x_{n}) = 1$ by definition. 
}    Moreover, as we will see in the next subsection \S $\ref{subsec:GeneratingFunctionNUSF}$, 
one can define $\s^{\L}_{k}(\x_{n})$ for any integer $k \in \Z$. 
\end{enumerate} 

\end{ex}  

\begin{rem} 
In our previous paper \cite[\S 4.5]{Nakagawa-Naruse2016}, we investigated 
various properties of the universal factorial Schur functions 
$s^{\L}_{\lambda}(\x_{n}|\b)$ such as 
``vanishing property'',  
``basis theorem''. 
The new universal factorial Schur functions $\s^{\L}_{\lambda}(\x_{n}|\b)$  also 
have the similar properties. We shall discuss this problem 
elsewhere.  
\end{rem}

\subsection{Gysin formulas for the new universal Schur functions}   \label{subsec:GysinFormulasNUSF}  
In this subsection, we shall establish  Gysin formulas for the new universal 
factorial Schur functions.   Using these formulas, we are able to compare the 
new universal factorial Schur functions (``new'' functions for short) $\s^{\L}_{\lambda}(\x_{n}|\b)$ 
with the universal factorial Schur functions (``old'' functions for short) 
 $s^{\L}_{\lambda}(\x_{n}|\b)$
introduced in \S \ref{subsec:UFSSPQF}.

\subsubsection{Gysin formulas for the new universal Schur functions} 
We begin with the following theorem: 
From Theorem  \ref{thm:Nakagawa-Naruse(GeneralFlagBundles)}, 
we   have  immediately  
\begin{theorem} [Characterization of the new universal Schur functions]  \label{thm:Nakagawa-Naruse(CharacterizationNUSF)}   
For the Gysin homomorphism  
 $(\eta_{\lambda})_{*}:  MU^{*}(\mathcal{F} \ell^{\lambda}(E)) \longrightarrow  MU^{*}(X)$,  the following formula holds$:$
 \begin{equation*}   \label{eqn:eta_lambda(x^[lambda])}     
    (\eta_{\lambda})_{*} (\x^{[\lambda]})  =  \s^{\L}_{\lambda}(\x_{n}), 
 \end{equation*}   
where $x_{1}, \ldots, x_{n}$ are the $MU^{*}$-theory Chern roots of $E$ as before.  
More gererally,   the following factorial version  holds  for a 
sequence  $\b = (b_{1}, b_{2}, \ldots )$   of certain elements in $MU^{*}(X)$$:$  
\begin{equation}    \label{eqn:eta_lambda((x|b)^[lambda])}   
         (\eta_{\lambda})_{*}((\x|\b)^{[\lambda]})  = \s^{\L}_{\lambda}(\x_{n}|\b). 
\end{equation}  

\end{theorem}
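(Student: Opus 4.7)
The plan is to reduce the statement directly to Theorem \ref{thm:Nakagawa-Naruse(GeneralFlagBundles)}, the universal symmetrizer description of $(\eta_\lambda)_*$ on $S_n^\lambda$-invariant polynomials. The main observation to be verified is that the expression $(\x|\b)^{[\lambda]}$ is $S_n^\lambda$-invariant, after which the formula is immediate by comparison with Definition \ref{df:DefinitionNUFSF}.

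First I would check the invariance. Recall $S_n^\lambda = S_{m_1} \times \cdots \times S_{m_d}$, where the factor $S_{m_r}$ permutes the indices in the $r$-th block $\{\nu(r-1)+1,\ldots,\nu(r)\}$. In the defining product
\begin{equation*}
  (\x|\b)^{[\lambda]} = \prod_{r=1}^{d}\prod_{\nu(r-1) < i \leq \nu(r)} [x_i|\b]^{n_r + n - \nu(r)},
\end{equation*}
every variable inside a fixed block $r$ is raised to the \emph{same} exponent $n_r + n - \nu(r)$. Hence the inner product over $\nu(r-1) < i \leq \nu(r)$ is a symmetric function of $(x_{\nu(r-1)+1},\ldots,x_{\nu(r)})$, and the whole expression is invariant under $S_n^\lambda$.

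Next I would apply Theorem \ref{thm:Nakagawa-Naruse(GeneralFlagBundles)} to $f(x_1,\ldots,x_n) = (\x|\b)^{[\lambda]}$, viewed as a symmetric expression in each block with coefficients in $MU^*(X)$ (the elements $b_1,b_2,\ldots$ being treated as pulled-back scalars, exactly as in Remark \ref{rem:tau_*(x^{lambda+rho_{n-1}})}, and the projection formula then lets them pass through $(\eta_\lambda)_*$). This yields
\begin{equation*}
  (\eta_\lambda)_*\bigl((\x|\b)^{[\lambda]}\bigr)
   = \sum_{\overline{w}\,\in\, S_n/S_n^\lambda} w \cdot \left[
        \frac{(\x|\b)^{[\lambda]}}{\prod_{1\leq i<j\leq n,\ n(i)<n(j)}(x_i +_{\L} \overline{x}_j)}
     \right],
\end{equation*}
which is precisely the right-hand side of Definition \ref{df:DefinitionNUFSF}, proving (\ref{eqn:eta_lambda((x|b)^[lambda])}). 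The non-factorial identity $(\eta_\lambda)_*(\x^{[\lambda]}) = \s^{\L}_\lambda(\x_n)$ is then obtained by the specialization $\b = \bm{0}$.

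The only genuine subtlety is the formal-power-series nature of $[x_i|\b]^k = \prod_{j=1}^{k}(x_i +_{\L} b_j)$: strictly speaking, Theorem \ref{thm:Nakagawa-Naruse(GeneralFlagBundles)} is stated for polynomials, whereas $(\x|\b)^{[\lambda]}$ lies in a completion. However, since the Chern roots $x_i$ of $E$ are nilpotent in $MU^*(\F\ell^{\lambda}(E))$, every formal power series in the $x_i$ with $MU^*(X)[[\b]]$-coefficients truncates to a polynomial, so the symmetrizer formula applies verbatim term by term. No further computation is required; once the $S_n^\lambda$-invariance is observed, the theorem is a direct corollary of Theorem \ref{thm:Nakagawa-Naruse(GeneralFlagBundles)}.
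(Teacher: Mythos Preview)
Your proof is correct and follows exactly the approach of the paper, which simply states that the theorem follows immediately from Theorem~\ref{thm:Nakagawa-Naruse(GeneralFlagBundles)}. In fact you supply more detail than the paper does, explicitly verifying the $S_n^\lambda$-invariance of $(\x|\b)^{[\lambda]}$ and matching the resulting symmetrizer expression with Definition~\ref{df:DefinitionNUFSF}.
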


\begin{ex}    \label{ex:GysinFormulasNUSF}  
\quad 

\begin{enumerate} 
\item Let us consider the ``one-row'' case $\lambda = (k)$ as in Example 
$\ref{ex:ExamplesNUFSF}$ $(2)$.  
Thus the corresponding flag bundle is the projective bundle
 $\pi_{1}:  G^{1}(E) \longrightarrow X$, and Theorem  $\ref{thm:Nakagawa-Naruse(CharacterizationNUSF)}$ implies the following formula$:$ 
\begin{equation*} 
     \pi_{1 *}(x_{1}^{k + n-1})  =    \s^{\L}_{k}(\x_{n}).  
\end{equation*}

\item   Let $\lambda = (\underbrace{a, \ldots, a}_{q},  \underbrace{b, \ldots, b}_{n-q})  =  (a^{q} \; b^{n-q})  \in \mathcal{P}_{n}$ be a partition of two rows with  $a > b \geq  0$ as in 
Example $\ref{ex:PartialFlagBundles}$ $(2)$.  Then the corresponding flag 
bundle is the Grassmann bundle $\pi:  G^{q}(E)   \longrightarrow X$, and one sees 
directly that $\x^{[\lambda]}  
=  x_{1}^{a + n-q} \cdots x_{q}^{a + n-q}  x_{q + 1}^{b} \cdots x_{n}^{b}$, 
and Theorem $\ref{thm:Nakagawa-Naruse(CharacterizationNUSF)}$ implies 
\begin{equation*} 
    \pi_{*} (x_{1}^{a + n-q} \cdots x_{q}^{a + n-q}  x_{q + 1}^{b} \cdots x_{n}^{b}) 
   = \s^{\L}_{(a^{q} \;  b^{n-q})}  (\x_{n}).   
\end{equation*}

\item   Let $\lambda \in \mathcal{P}_{n}$ be a distinct partition 
such that 
$\lambda_{1} > \lambda_{2} > \cdots > \lambda_{n-1} > \lambda_{n} \geq 0$ 
as in Example  $\ref{ex:ExamplesNUFSF}$ $(1)$. 
Then  the corresponding partial flag bundle is the full flag bundle
$\tau: \F \ell (E)  \longrightarrow X$.  In this case,  one obtains that $\x^{[\lambda]} =  \x^{\lambda + \rho_{n-1}}$, and Corollary   $\ref{cor:Nakagawa-Naruse(CharacterizationUSF)}$ 
and  Theorem $\ref{thm:Nakagawa-Naruse(CharacterizationNUSF)}$  imply   
\begin{equation*} 
        s^{\L}_{\lambda} (\x_{n}) 
    = \tau_{*}(\x^{\lambda + \rho_{n-1}})   
     =  \s^{\L}_{\lambda} (\x_{n}),    
\end{equation*} 
which was  already seen  in Example $\ref{ex:ExamplesNUFSF}$ $(1)$ $($with $\b = \bm{0}$$)$.  


\end{enumerate} 
\end{ex}

\subsubsection{Comparison of ``new'' functions $\s^{\L}_{\lambda}(\x_{n}|\b)$ 
with  ``old'' functions  $s^{\L}_{\lambda}(\x_{n}|\b)$
}

More generally, the above Gysin  formulas (Theorem \ref{thm:Nakagawa-Naruse(CharacterizationNUSF)}) 
can be formulated as those between two partial flag bundles of the form   
``$\pi^{\mu}_{\lambda}:  \F \ell^{\mu}(E)  \twoheadlongrightarrow \F \ell^{\lambda}(E)$'', 
where the partition $\mu$ is a ``refinement'' of   $\lambda$.   
Let us explain what the word ``refinement'' means:  
We use the notation in the beginning of this subsection (see also \S \ref{subsec:UHLF} 
and \S \ref{subsec:GysinFormulasVariousSchurFunctions}).  For two positive integers $a$, $b$ with $a < b$, denote by 
$[a, b]$ the set of integers $i$ such that $a \leq i \leq b$.
Let $\lambda, \mu  \in \mathcal{P}_{n}$ be two partitions of length $\leq n$.  
Then as explained in \S \ref{subsec:UHLF},  
one obtains two   decompositions   
\begin{equation*} 
  [1, n]  = I_{1} \sqcup  I_{2} \sqcup  \cdots \sqcup I_{d},  \quad 
  [1, n]  = J_{1}  \sqcup  J_{2} \sqcup  \cdots \sqcup J_{e}
\end{equation*} 
of the interval $[1, n]$ corresponding to $\lambda$, $\mu$ respectively.  
Suppose that  the decomposition $[1, n] = J_{1} \sqcup \cdots \sqcup J_{e}$ is 
a refinement of the decomposition $[1, n] = I_{1} \sqcup \cdots \sqcup I_{d}$ 
in the usual sense, i.e.,  $I_{1} = J_{1} \sqcup \cdots \sqcup J_{k_{1}}$, 
$I_{2} = J_{k_{1} + 1} \sqcup \cdots \sqcup J_{k_{2}}$, and so on,  for some positive 
integers $1 \leq k_{1} < k_{2} <  \cdots \leq e$.    
Then we say  that $\mu$ is a ``refinement'' of $\lambda$.\footnote{
Note that even though $\mu$ is a ``refinement'' of $\lambda$, 
this does not necessarily mean $\mu \subset \lambda$.  
}
By the construction of the associated partial flag bundle $\F \ell^{\lambda}(E)$ 
(see \S \ref{subsec:GysinFormulasVariousSchurFunctions}), 
we have a natural projection from $\F \ell^{\mu} (E)$ to $\F \ell^{\lambda}(E)$,  denoted by $\pi^{\mu}_{\lambda}$.  
Furthermore, for  three partitions $\lambda, \mu$, and $\nu \in \mathcal{P}_{n}$, 
if $\nu$ is a refinement of $\mu$, and $\mu$ is a refinement of $\lambda$, 
then $\nu$ is also a refinement of $\lambda$, and the 
relation $\pi^{\mu}_{\lambda} \circ \pi^{\nu}_{\mu} = \pi^{\nu}_{\lambda}$
holds.   For example, 
the partition $\rho_{n-1} =  (n-1, n-2, \ldots, 1, 0)  \in \mathcal{P}_{n}$ 
is a refinement of any partition $\lambda \in \mathcal{P}_{n}$, 
and any partition $\lambda \in \mathcal{P}_{n}$ is a 
refinement of the empty partition $\emptyset = (0^{n})$. 
In particular, we are concerned with   the following three   projections  
and the induced Gysin homomorphisms in complex cobordism:  
\begin{equation}  \label{eqn:ProjectionsPartialFlagBundles}    
\begin{array}{llll} 
   &   \pi^{\rho_{n-1}}_{\emptyset} = \tau:  \F \ell^{\rho_{n-1}}(E)  = \F \ell (E)    \twoheadlongrightarrow 
            \F \ell^{\emptyset} (E) = X, \medskip   \\
&  \pi^{\lambda}_{\emptyset} = \eta_{\lambda}:  
         \F \ell^{\lambda}(E)  \twoheadlongrightarrow \F \ell^{\emptyset} (E) = X, \medskip \\
 & \pi^{\rho_{n-1}}_{\lambda}:  \F \ell^{\rho_{n-1}}(E) = \F \ell (E)  \twoheadlongrightarrow 
   \F \ell^{\lambda}(E).   \medskip 
\end{array} 
\end{equation}  
The theorem below (Theorem \ref{thm:GysinFormulasNUFSF}) 
will be useful in describing the difference between the ``new'' 
function $\s^{\L}_{\lambda}(\x_{n}|\b)$ and the ``old'' one 
$s^{\L}_{\lambda}(\x_{n}|\b)$.  
Note that    $\tau$ is written as a composite $\eta_{\lambda}  \circ  \pi^{\rho_{n-1}}_{\lambda}$, and hence  $\tau_{*}  =   (\eta_{\lambda})_{*} \circ 
(\pi^{\rho_{n-1}}_{\lambda})_{*}$.  
\begin{theorem}    \label{thm:GysinFormulasNUFSF}   
\quad 
\begin{enumerate} 
\item  For the Gysin map $(\pi^{\rho_{n-1}}_{\emptyset})_{*} 
= \tau_{*}:  MU^{*}(\F \ell (E))  \longrightarrow  MU^{*}(X)$,  
the following formula holds$:$ 
\begin{equation*} 
    (\pi^{\rho_{n-1}}_{\emptyset})_{*} ([\x|\b]^{\lambda + \rho_{n-1}})  
   =  s^{\L}_{\lambda}(\x_{n}|\b).    
\end{equation*} 

\item  For the Gysin map $(\pi^{\lambda}_{\emptyset})_{*} = (\eta_{\lambda})_{*}: 
MU^{*}(\F \ell^{\lambda}(E))  \longrightarrow   MU^{*}(X)$, 
the following formula holds$:$ 
\begin{equation*} 
    (\pi^{\lambda}_{\emptyset})_{*}((\x|\b)^{[\lambda]})  = \s^{\L}_{\lambda} (\x_{n}|\b).
\end{equation*} 

\item For the Gysin map $(\pi^{\rho_{n-1}}_{\lambda})_{*}:  MU^{*}(\F \ell (E))  \longrightarrow MU^{*}(\F \ell^{\lambda} (E))$,  the following formula holds$:$ 
\begin{equation*} 
  (\pi^{\rho_{n-1}}_{\lambda})_{*} ([\x|\b]^{\lambda + \rho_{n-1}}) 
        =  (\x|\b)^{[\lambda]}  \cdot \prod_{r = 1}^{d}  s^{\L}_{\emptyset} (x_{\nu (r-1) + 1}, \ldots, x_{\nu (r)} |\b[+[n_{r} + n - \nu (r)]]).  
\end{equation*} 
Here  $\b = (b_{1}, b_{2}, \ldots )$  is a sequence of elements in $MU^{*}(X)$, and 
$\b [+m] := (b_{m+1}, b_{m+2}, \ldots)$ for a positive integer $m$.  
\end{enumerate} 
\end{theorem}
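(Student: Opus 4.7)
The plan is to reduce the theorem to material already established: (1) is nothing more than the factorial Jacobi--Trudi identity recorded in equation~(\ref{eqn:tau_*([x|b]^{lambda+rho_{n-1}})}) of Remark~\ref{rem:tau_*(x^{lambda+rho_{n-1}})}, which follows by feeding the monomial $[\x|\b]^{\lambda+\rho_{n-1}}$ into the symmetrizing-operator description of $\tau_{*}$ (Theorem~\ref{thm:Nakagawa-Narusetau^*tau_*}) and matching against the defining formula~(\ref{eqn:Definitions^L(x_n|b)}); and (2) is exactly formula~(\ref{eqn:eta_lambda((x|b)^[lambda])}) of Theorem~\ref{thm:Nakagawa-Naruse(CharacterizationNUSF)}. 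The genuinely new content is (3), which computes the intermediate Gysin map $(\pi^{\rho_{n-1}}_{\lambda})_{*}$ and ties the other two pieces together via $\tau_{*}=(\eta_{\lambda})_{*}\circ(\pi^{\rho_{n-1}}_{\lambda})_{*}$.

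For (3) the first step is a purely algebraic decomposition of the monomial $[\x|\b]^{\lambda+\rho_{n-1}}$ along the block structure of $\lambda$. Setting $N_{r}:=n_{r}+n-\nu(r)$, for $i=\nu(r-1)+j$ with $1\leq j\leq m_{r}$ the exponent $(\lambda+\rho_{n-1})_{i}=n_{r}+n-i$ equals $N_{r}+(m_{r}-j)$, and the definition of $[\,\cdot\,|\,\cdot\,]^{k}$ telescopes as $[x_{i}|\b]^{N_{r}+m_{r}-j}=[x_{i}|\b]^{N_{r}}\cdot[x_{i}|\b[+N_{r}]]^{m_{r}-j}$. Taking the product over all $i$ gives the key factorization
\begin{equation*}
[\x|\b]^{\lambda+\rho_{n-1}} \;=\; (\x|\b)^{[\lambda]}\cdot\prod_{r=1}^{d}\prod_{j=1}^{m_{r}}[x_{\nu(r-1)+j}|\b[+N_{r}]]^{m_{r}-j},
\end{equation*}
in which the $r$-th inner factor is exactly the staircase monomial $[\,\cdot\,|\,\cdot\,]^{\emptyset+\rho_{m_{r}-1}}$ attached to the empty partition in the $m_{r}$ Chern roots of $E_{r}$, with shifted factorial parameters $\b[+N_{r}]$.

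The second step is to feed this identity into the Gysin map. The projection $\pi^{\rho_{n-1}}_{\lambda}\colon\F\ell(E)\to\F\ell^{\lambda}(E)$ is the fiber product over $\F\ell^{\lambda}(E)$ of the full flag bundles $\tau_{r}\colon\F\ell(E_{r})\to\F\ell^{\lambda}(E)$, since over $\F\ell^{\lambda}(E)$ one has $\eta_{\lambda}^{*}E\cong\bigoplus_{r}E_{r}$ and $\pi^{\rho_{n-1}}_{\lambda}$ only completes the splitting of each $E_{r}$. The factor $(\x|\b)^{[\lambda]}$ is $S_{n}^{\lambda}$-invariant and hence pulled back from $MU^{*}(\F\ell^{\lambda}(E))$, so by the projection formula it passes unchanged through $(\pi^{\rho_{n-1}}_{\lambda})_{*}$. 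The remaining product is already organized block-by-block, each block factor depending only on the Chern roots of $E_{r}$; therefore $(\pi^{\rho_{n-1}}_{\lambda})_{*}$ distributes over it as $\prod_{r}(\tau_{r})_{*}$, and applying (1) with the empty partition and parameters $\b[+N_{r}]$ to each $(\tau_{r})_{*}$ yields $s^{\L}_{\emptyset}(x_{\nu(r-1)+1},\ldots,x_{\nu(r)}|\b[+N_{r}])$, as required. The main obstacle is precisely the distributivity of $(\pi^{\rho_{n-1}}_{\lambda})_{*}$ across the fiber-product structure; the cleanest way to handle it is to factor $\pi^{\rho_{n-1}}_{\lambda}$ as an iterated composition of block-refining flag bundles and, at each stage, combine Theorem~\ref{thm:Nakagawa-Narusetau^*tau_*} with the projection formula, so that the residual $\prod_{r}S_{m_{r}}$-symmetrizer manifestly factors as a product of Jacobi-type symmetrizers, one for each $S_{m_{r}}$.
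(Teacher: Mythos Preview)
Your proposal is correct and follows essentially the same route as the paper. Parts (1) and (2) are dispatched identically by reference to (\ref{eqn:tau_*([x|b]^{lambda+rho_{n-1}})}) and (\ref{eqn:eta_lambda((x|b)^[lambda])}); for part (3) the paper carries out exactly your factorization $[\x|\b]^{\lambda+\rho_{n-1}}=(\x|\b)^{[\lambda]}\cdot\prod_{r}[\,\cdot\,|\b[+N_{r}]]^{\rho_{m_{r}-1}}$ in the two-block case $\lambda=(a^{q}\,b^{n-q})$, pulls the $S_{n}^{\lambda}$-invariant factor $(\x|\b)^{[\lambda]}$ through via the projection formula, and then applies the Bressler--Evens symmetrizing-operator description (Theorem~\ref{thm:Bressler-EvensThm1.8}) so that the remaining $S_{q}\times S_{n-q}$-symmetrizer splits as the product of the two Jacobi symmetrizers for the empty partitions, leaving the general $d$-block case as ``analogous computation''. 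Your write-up gives the general block decomposition directly and makes explicit the fiber-product structure $\F\ell(E)\cong\prod_{\F\ell^{\lambda}(E)}\F\ell(E_{r})$ underlying the factorization of $(\pi^{\rho_{n-1}}_{\lambda})_{*}$, which the paper leaves implicit; but the mechanism you identify as ``cleanest'' (the $\prod_{r}S_{m_{r}}$-symmetrizer factoring as a product of blockwise Jacobi symmetrizers) is precisely the paper's argument.
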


\begin{proof} 
(1)  and (2):  
The Gysin maps $(\pi^{\rho_{n-1}}_{\emptyset})_{*} = \tau_{*}$
and $(\pi^{\lambda}_{\emptyset})_{*}  =  (\eta_{\lambda})_{*}$ 
were  already considered in (\ref{eqn:tau_*([x|b]^{lambda+rho_{n-1}})}) 
and (\ref{eqn:eta_lambda((x|b)^[lambda])}) respectively.    

(3)  We shall show the assertion when the partition $\lambda  \in \mathcal{P}_{n}$ is 
of the form $\lambda = (a^{q} \,  b^{n-q}) $ (see Example \ref{ex:GysinFormulasNUSF} (2)).  
In this case, one sees 
\begin{equation*} 
\begin{array}{lll} 
     & [\x|\b]^{\lambda + \rho_{n-1}}  
                 =  \displaystyle{\prod_{i=1}^{q}}   [x_{i}|\b]^{a + n-i}  \times 
                                                   \prod_{i=q+1}^{n} [x_{i}|\b]^{b + n-i}   \medskip \\
                &  =  \displaystyle{\prod_{i=1}^{q}}  [x_{i}|\b]^{a + n-q} \times \prod_{i=q+1}^{n} [x_{i}|\b]^{b} 
                           \times \prod_{i=1}^{q} [x_{i}|\b[+(a + n-q)]]^{q-i} 
                           \times  \prod_{i=q+1}^{n} [x_{i} |\b[+b]]^{n-i}    \medskip \\
                & = (\x|\b)^{[\lambda]}  \times 
               \displaystyle{\prod_{i=1}^{q}}   [x_{i}|\b[+(a + n-q)]]^{q-i} 
                           \times  \prod_{i=q+1}^{n} [x_{i} |\b[+b]]^{n-i}.      \medskip 
\end{array}  
\end{equation*} 
Since $(\x |\b)^{[\lambda]}$ is $S_{q}  \times S_{n-q}$-invariant, 
we have by the Bressler-Evens formula (Theorem \ref{thm:Bressler-EvensThm1.8}), 
\begin{equation*} 
\begin{array}{llll} 
    &  (\pi^{\rho_{n-1}}_{\lambda})_{*} ([\x|\b]^{\lambda + \rho_{n-1}})    
    =   \displaystyle{\sum_{w \in S_{q}  \times S_{n-q}}}  w \cdot 
      \left [ 
                   \dfrac{[\x|\b]^{\lambda + \rho_{n-1}}} 
                           {\prod_{1 \leq i <  j \leq q}  (x_{i} +_{\L} \overline{x}_{j})  
                              \times \prod_{q +1 \leq i  < j \leq n}   (x_{i} +_{\L} \overline{x}_{j})
                           }  
     \right ]   \medskip \\ 
& =  (\x|\b)^{[\lambda]}  \times 
      \displaystyle{\sum_{w \in S_{q}  \times S_{n-q}}}  w \cdot 
      \left [ 
                   \dfrac{   \prod_{i=1}^{q}  [x_{i}|\b[+(a + n-q)]]^{q-i} 
                           \times  \prod_{i=q+1}^{n} [x_{i} |\b[+b]]^{n-i}.       } 
                           {\prod_{1 \leq i <  j \leq q}  (x_{i} +_{\L} \overline{x}_{j})  
                              \times \prod_{q +1 \leq i  < j \leq n}   (x_{i} +_{\L} \overline{x}_{j})
                           }  
     \right ]   \medskip \\ 

  & =  (\x|\b)^{[\lambda]}  \times s^{\L}_{\emptyset}  (x_{1}, \ldots, x_{q}|\b[+(a + n-q)]) 
      \times s^{\L}_{\emptyset}(x_{q+1}, \ldots, x_{n}|\b[+b]),  
\end{array}   
\end{equation*} 
and the theorem holds.

For an arbitrary partition $\lambda \in \mathcal{P}_{n}$, write 
$\lambda$ as the following form: $\lambda = (n_{1}^{m_{1}} \; n_{2}^{m_{2}} \; \cdots \; n_{d}^{m_{d}})$, $n_{1} > n_{2} > \cdots > n_{d} \geq 0$. 
Here $m_{i} > 0$ for $i = 1, 2, \ldots, d$, and $\sum_{i=1}^{d} m_{i} = n$.  
Put $\nu (r) = \sum_{i=1}^{r} m_{i}$ for $r = 1, 2, \ldots, d$ and 
$\nu (0) = 0$.   Then  analogous  computation to the above  case  leads to the result of 
$(\pi^{\rho_{n-1}}_{\lambda})_{*}([\x|\b]^{\lambda + \rho_{n-1}})$.   
Note that in the case where $\lambda = (a^{q} \,  b^{n-q})$, 
the ``parameter shift'' is given by $a + n-q = n_{1} + n - \nu (1)$ and $b = n_{2} + n - \nu (2)$.  
\end{proof}

With the aide of Theorem \ref{thm:GysinFormulasNUFSF}, 
we observe that

\begin{equation}  \label{eqn:DifferenceOldNew}   
\begin{array}{llll} 
    s^{\L}_{\lambda}(\x_{n}|\b)  &  =  \tau_{*}([\x|\b]^{\lambda + \rho_{n-1}}) 
        =  (\eta_{\lambda})_{*} \circ (\pi^{\rho_{n-1}}_{\lambda})_{*} ([\x|\b]^{\lambda + \rho_{n-1}})     \medskip \\
     & = (\eta_{\lambda})_{*}  \left  ( 
                   (\x|\b)^{[\lambda]} 
   \cdot     \displaystyle{\prod_{r = 1}^{d}}    s^{\L}_{\emptyset} (x_{\nu (r-1) + 1}, \ldots, x_{\nu (r)} |\b[+[n_{r} + n - \nu (r)]]) 
           \right ),     \medskip \\
     \s^{\L}_{\lambda}(\x_{n}|\b) &  =  (\eta_{\lambda})_{*} ((\x|\b)^{[\lambda]}).    \medskip 
\end{array}  
\end{equation} 
Thus  the difference between the  ``new'' function $\s^{\L}_{\lambda}(\x_{n}|\b)$ and 
the ``old''   one  $s^{\L}_{\lambda}(\x_{n}|\b)$  is given  by the product of  
``old'' functions corresponding  to   the the empty partition $\emptyset$.  
From this,  ``new'' function $\s^{\L}_{\lambda}(\x_{n}|\b)$ reduce to $G_{\lambda}(\x_{n}|\b)$ 
under the specialization from $F_{\L}(u, v)$ to $F_{m}(u, v)$  because of the fact $G_{\emptyset}(\x_{n}|\b) = 1$ (see e.g., Ikeda-Naruse \cite[\S 2.4]{Ikeda-Naruse2013}).



\subsection{Generating function for $\s^{\L}_{k}(\x_{n})$}    \label{subsec:GeneratingFunctionNUSF}  
As we mentioned in Example \ref{ex:ExamplesNUFSF} (2),  
the new universal Schur functions $\s^{\L}_{k}(\x_{n}) := \s^{\L}_{(k)}(\x_{n})$ 
for $k \geq 1$  
can be extended to non-positive integers. Thus one has 
the functions $\s^{\L}_{k}(\x_{n})$ for all integers $k \in \Z$. 
In this subsection, we shall give the generating function for these.  
For this purpose, we make use of Quillen's result.  
Recall from Quillen \cite{Quillen1969} 
that the 
{\it normalized invariant differential form}\footnote{
If we put $a^{\L}_{i, j} = 0$ for all $i, j \geq 1$, then $\omega_{\L}(t)$ reduces to 
$dt$.  If we put $a^{\L}_{1, 1} = \beta$ and $a^{\L}_{i, j} = 0$ for all $(i, j) \neq (1, 1)$, 
then $\omega_{\L} (t)$ reduces to $\dfrac{dt}{1 + \beta t}$.  
} $\omega_{\L}(t)$ associated 
with the universal formal group law $F_{\L}(u, v)  = u + v + \sum_{i, j \geq 1} a^{\L}_{i, j} u^{i} v^{j}$ is defined by 
\begin{equation*} 
   \omega_{\L} (t) :=  \dfrac{d t}{  \dfrac{\partial F_{\L}}{\partial v} (t, 0)  
                                    }  
                  = \dfrac{dt}{  1 + \sum_{i \geq 1}  a^{\L}_{i, 1} t^{i}
                                     }. 
\end{equation*} 
The logarithm $\ell_{\L} (x)$ of $F_{\L}$  is then determined by the equations 
\begin{equation*} 
     \ell_{\L}' (t) \, d  t  =   \omega_{\L} (t),   \quad \ell_{\L} (0) = 0. 
\end{equation*} 
Then Quillen  gave the following formula (see also Damon \cite[p.650, Proposition]{Damon1973}): 
\begin{theorem}  [Quillen \cite{Quillen1969}, Theorem 1]   \label{thm:QuillenResidueFormula} 
Let $E \longrightarrow X$ be a complex vector bundle of rank $n$, let 
$\pi_{1}:  G^{1}(E) \cong P (E^{\vee})  \longrightarrow X$ be the associated projective 
bundle of lines in the dual $E^{\vee}$ of $E$,  and let $Q  \cong \mathcal{O} (1)$ 
be the canonical quotient line bundle on $G^{1}(E) \cong P(E^{\vee})$.  
Then the Gysin homomorphism $\pi_{1 *}:  MU^{*}(G^{1}(E)) \longrightarrow MU^{*}(X)$ 
is given by the residue formula
\begin{equation}   \label{eqn:QuillenResidueFormula}  
     \pi_{1 *} (f(\xi))    =  \displaystyle{\Res_{t = 0}} 
                                                         \dfrac{ f(t) \,  \omega_{\L}(t)}  
                                                         {\prod_{i=1}^{n}  (t +_{\L}  \overline{x}_{i})}
          = \Res_{t = 0}  \dfrac{  f(t) \, dt} 
                                                   {(1 + \sum_{i \geq 1} a^{\L}_{i, 1}t^{i})
                                                    \prod_{i=1}^{n} (t +_{\L} \overline{x}_{i})}. 
\end{equation} 
Thus  $\pi_{1 *}(f(\xi))$ is given by the coefficient of $t^{-1}$ in the Laurent series 
\begin{equation*} 
        \dfrac{  f(t)} 
                {(1 + \sum_{i \geq 1} a^{\L}_{i, 1}t^{i})
                   \prod_{i=1}^{n} (t +_{\L} \overline{x}_{i})}. 
\end{equation*} 
Here $f(t) \in MU^{*}(X)[t]$,  $\xi := c^{MU}_{1}(Q) \in MU^{2}(G^{1}(E))$, and 
$x_{1}  = \xi$, $x_{2}, \ldots, x_{n}$ are the $MU^{*}$-theory Chern roots of $E$.   
\end{theorem}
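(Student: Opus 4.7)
The plan is to reduce the residue formula to the symmetrizer description of the Gysin map $\pi_{1*}$ already available in the paper, and then verify the resulting algebraic identity by a standard residue calculation over the formal group.

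First, I would specialize Theorem \ref{thm:Nakagawa-Naruse(GeneralFlagBundles)} to the projective bundle $\pi_1 \colon G^1(E) \to X$, viewed as $\F\ell^{\lambda}(E)$ for the partition $\lambda = (1, 0^{n-1})$ with decomposition $[n] = \{1\} \sqcup \{2, \ldots, n\}$ and stabilizer $S_n^{\lambda} = S_1 \times S_{n-1}$. Under the identification $\xi = c_1^{MU}(Q) = x_1$ coming from the tautological exact sequence on $G^1(E)$, the theorem yields the symmetrizer formula
\begin{equation*}
\pi_{1*}(f(\xi)) \;=\; \sum_{i=1}^{n} \frac{f(x_i)}{\prod_{j \neq i}(x_i +_{\L} \overline{x}_j)}.
\end{equation*}
It therefore suffices to verify the purely algebraic identity between this sum and the claimed residue at $t = 0$.

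The key algebraic input is the invariance identity $F_u(u, \overline{u}) = 1/F_1(u)$, where $F_1(u) := (\partial F_{\L}/\partial v)(u, 0)$. This follows from translation invariance of $\omega_{\L}$: setting $\tilde{u} = u +_{\L} v$ in $du/F_1(u) = d\tilde{u}/F_1(\tilde{u})$ gives $F_u(u, v) = F_1(u +_{\L} v)/F_1(u)$, and taking $v = \overline{u}$ together with $F_1(0) = 1$ yields the stated identity. Using this, a direct computation of local residues at each simple pole $t = x_i$ of the $1$-form $\omega = f(t)\omega_{\L}(t)/\prod_j(t +_{\L} \overline{x}_j)$ gives
\begin{equation*}
\Res_{t = x_i}\omega \;=\; \frac{f(x_i)\,F_1(x_i)^{-1}}{F_u(x_i, \overline{x}_i)\prod_{j \neq i}(x_i +_{\L} \overline{x}_j)} \;=\; \frac{f(x_i)}{\prod_{j \neq i}(x_i +_{\L} \overline{x}_j)},
\end{equation*}
since the invariance identity forces the two factors $F_1(x_i)^{-1}$ and $F_u(x_i, \overline{x}_i)$ to cancel.

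Finally, I would reconcile this sum of honest residues with the formal residue at $t = 0$. As a rational function, $\omega$ is regular at $t = 0$, so the theorem uses the convention in which each $(t +_{\L} \overline{x}_i)^{-1}$ is expanded as a series in non-negative powers of $x_i/t$, i.e.\ one works in the completed Laurent ring $MU^*(X)[[x_1, \ldots, x_n]]((t^{-1}))$. Under this convention the formal residue at $0$ equals $-\Res_{t = \infty}\omega$, and the classical residue theorem on $\mathbb{P}^1_t$ yields $\sum_i \Res_{t = x_i}\omega = -\Res_{t = \infty}\omega$, producing the desired equality. The main obstacle, I expect, is the careful bookkeeping for this formal Laurent expansion --- verifying that every factor admits the requested expansion over $MU^*(X)$ and that the coefficient of $t^{-1}$ is a well-defined element of $MU^*(X)$, consistent with the convention used in Quillen \cite{Quillen1969} and Damon \cite{Damon1973}. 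Once these points are settled, the three steps combine to yield the formula.
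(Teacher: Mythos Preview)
The paper does not actually prove this theorem: it is quoted from Quillen \cite{Quillen1969}, and immediately afterwards the paper records the symmetrizer formula $\pi_{1*}(f(\xi))=\sum_{i=1}^{n} f(x_i)/\prod_{j\neq i}(x_i+_{\L}\overline{x}_j)$ as a special case of Theorem \ref{thm:Nakagawa-Naruse(GeneralFlagBundles)} and then simply cites Vishik \cite[Lemma 5.36]{Vishik2007} for the equivalence of the two expressions. Your first step (deriving the symmetrizer formula from Theorem \ref{thm:Nakagawa-Naruse(GeneralFlagBundles)}) is exactly what the paper does; your second and third steps supply an argument for the step the paper outsources to Vishik.

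Your algebraic argument is essentially the standard one and is correct in outline. The identity $F_u(u,\overline{u})=1/F_1(u)$ is right and is precisely what makes each local residue at $t=x_i$ collapse to $f(x_i)/\prod_{j\neq i}(x_i+_{\L}\overline{x}_j)$. The only point that deserves more care --- and you correctly flag it --- is the passage from ``sum of residues at the $x_i$ equals minus the residue at infinity'' to the formal residue statement over $MU^{*}$. The clean way to make this rigorous is not to invoke the residue theorem on $\mathbb{P}^1$ literally (the $x_i$ are not complex numbers) but to observe that both sides are $MU^{*}(X)$-linear in $f$ and that $MU^{*}(G^1(E))$ is generated as an $MU^{*}(X)$-module by $1,\xi,\ldots,\xi^{n-1}$, so it suffices to check the identity for $f(t)=t^k$; alternatively one verifies the formal partial-fraction identity directly in $MU^{*}(X)[[x_1,\ldots,x_n]]((t^{-1}))$ as you suggest. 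Either route closes the gap, and the substance of your argument is sound.
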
  
On the other hand,  in the same setting as in Examples \ref{ex:ExamplesNUFSF} and 
\ref{ex:GysinFormulasNUSF}, 
together with Theorem \ref{thm:Nakagawa-Naruse(GeneralFlagBundles)}, we obtain 
\begin{equation}    \label{eqn:PushForwardFormula(ProjectiveBundle)}   
       \pi_{1 *} (f(\xi))  =   \sum_{i=1}^{n}   
                                          \dfrac{ f(x_{i})} 
                                       { \prod_{j \neq i}  (x_{i} +_{\L} \overline{x}_{j})}.     
\end{equation} 
The equivalence of (\ref{eqn:QuillenResidueFormula}) and (\ref{eqn:PushForwardFormula(ProjectiveBundle)}) is shown by Vishik \cite[Lemma  5.36]{Vishik2007}.
Therefore if we take $f(t) = t^{k + n-1} \; (k \geq 1)$, 
we have by Example \ref{ex:ExamplesNUFSF} (2),  
\begin{equation} 
\begin{array}{llll}  
      \s^{\L}_{k}(\x_{n})   &   =  \displaystyle{\sum_{i=1}^{n}}   
      \dfrac{x_{i}^{k + n-1}} {\prod_{j \neq i}  (x_{i} +_{\L} \overline{x}_{j}) }   
=  \pi_{1 *}  (\xi^{k + n-1})    \medskip \\
&  = 
\Res_{t = 0}  \dfrac{ t^{k + n -1} \, dt}  
                                             {(1 + \sum_{i \geq 1} a^{\L}_{i, 1}t^{i}) \prod_{i=1}^{n} (t +_{\L} \overline{x}_{i})}.   \medskip 
\end{array}   
\end{equation} 
Thus  the coefficient of $t^{-1}$ in  the formal Laurent series 
\begin{equation*} 
        \dfrac{t^{k + n - 1}} 
                                {(1 + \sum_{i \geq 1} a^{\L}_{i, 1} t^{i}) \prod_{i=1}^{n}
                                     (t +_{\L}  \overline{x}_{i})} 
\end{equation*} 
is equal to $\s^{\L}_{k}(\x_{n})$. 

\begin{rem} 
   It should be remarked that the above residue formula is closely related to the 
recent work of Darondeau-Pragacz \cite{Darondeau-Pragacz2015}.  
The following comment might be useful  when one wants to relate the residue formula 
to their work. We borrow the notation from \cite[\S 0]{Darondeau-Pragacz2015}.  
For a Laurent  series $f(t)$  in one indeterminate $t$,  we shall denote by 
$[t^{k}]( f(t))$ the coefficient  of $t^{k}$ in $f (t)$.   The the above residue formula 
means 
\begin{equation*} 
\begin{array}{llll}  
   \s^{\L}_{k}(\x_{n})   &   = [t^{-1}]  \left (  
                             \dfrac{t^{k + n - 1}} 
                                {(1 + \sum_{i \geq 1} a^{\L}_{i, 1} t^{i}) \prod_{i=1}^{n}
                                (t +_{\L}  \overline{x}_{i})}  
                                      \right )    \medskip \\
                               & =  [t^{n-1}]  \left (  
                                      t^{k + n-1}  \cdot   
                             \dfrac{t^{n}} 
                                {(1 + \sum_{i \geq 1} a^{\L}_{i, 1} t^{i}) \prod_{i=1}^{n}
                                (t +_{\L}  \overline{x}_{i})}  
                                      \right ).     \medskip \\
                                 
\end{array}  
\end{equation*} 
If one wants to consider the ordinary cohomology theory,  we put $a^{\L}_{i, j} = 0$ 
for all $i, j \geq 1$.  Then the above formula leads to   
\begin{equation*} 
       s_{k}(\x_{n})   =   [t^{n-1}]  \left (
                                              t^{k + n-1} \cdot \dfrac{t^{n}}{\prod_{i=1}^{n}  (t - x_{i})} 
                                          \right ). 
\end{equation*} 
Here the left-hand side is the usual Schur polynomial corresponding to the one row $(k)$, 
namely, the $k$-th homogeneous complete symmetric polynomial.   
We  see easily that the rational  function 
 $\dfrac{t^{n}}{\prod_{i=1}^{n} (t - x_{i})}$  in the right-hand side  is the ``reversed Segre polynomial'' $s_{1/t}(E)$
of $E$ if we think  the variables $x_{1}, \ldots, x_{n}$  the Chern roots of  
a complex vector bundle $E$.  
This  is  the fundamental formula given  in \cite[\S 0, p.2]{Darondeau-Pragacz2015}.\footnote{Since Darondeau-Pragacz uses the projective bundle 
of lines $G_{1}(E)  = P(E)   \longrightarrow  X$, not $G^{1} (E)  \cong P(E^{\vee})$,  
in their formulation,  one has to change $E$  in their result  to its dual $E^{\vee}$.   
} 
\end{rem}

 From the above  interpretation, we can obtain the 
 generating function for $\s^{\L}_{k}(\x_{n}) \; (k \in \Z)$.  We argue as 
follows:  
Set 
\begin{equation*} 
     F_{n}(t) :=  \dfrac{t^{n}} 
          {(1 + \sum_{i \geq 1} a^{\L}_{i, 1} t^{i}) \prod_{i=1}^{n} (t +_{\L}  \overline{x}_{i})}.  
\end{equation*} 
Then  it follows from the above interpretation that the coefficient of $t^{-k}$ of $F_{n}(t)$ is equal  to $\s^{\L}_{k}(\x_{n})$ for each positive integer $k \geq 1$. 
By defining $\s^{\L}_{k}(\x_{n})$ for $k \leq 0$ by the same procedure, 
one obtains 
\begin{equation*} 
         F_{n}(t)  = \sum_{k   \in \Z}  \s^{\L}_{k}(\x_{n}) t^{-k}. 
\end{equation*} 
Thus we have  the following: 
\begin{theorem}  \label{thm:GeneratingFunctionNUSF} 
The generating function for $\s^{\L}_{k}(\x_{n}) \; (k \in \Z)$ is given 
by
\begin{equation}   \label{eqn:GeneratingFunctionNUSF}   
       \sum_{k \in \Z}  \s^{\L}_{k}(\x_{n}) u^{k}  
 = F_{n}(t)|_{t = u^{-1}} 
=  \dfrac{u^{-n}} 
          {(1 + \sum_{i \geq 1} a^{\L}_{i, 1} u^{-i}) \prod_{i=1}^{n} (u^{-1} +_{\L}  \overline{x}_{i})}.  
\end{equation} 
\end{theorem}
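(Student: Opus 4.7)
The plan is to recognize that this statement is essentially a bookkeeping consolidation of the Laurent coefficient description of $\s^{\L}_{k}(\x_{n})$ developed in the paragraph just preceding the theorem. The argument splits into three steps: (i) prove the coefficient identification for $k \geq 1$, (ii) take this identification as the definition of $\s^{\L}_{k}(\x_{n})$ for all $k \in \Z$, and (iii) perform the substitution $t = u^{-1}$.

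For step (i), I would combine Example \ref{ex:GysinFormulasNUSF} (1), which gives $\s^{\L}_{k}(\x_{n}) = \pi_{1*}(\xi^{k+n-1})$, with Quillen's residue formula (Theorem \ref{thm:QuillenResidueFormula}) applied to $f(t) = t^{k+n-1}$. This yields, for every $k \geq 1$,
\begin{equation*}
\s^{\L}_{k}(\x_{n}) \;=\; \Res_{t=0}\frac{t^{k+n-1}\,dt}{\left(1 + \sum_{i\geq 1} a^{\L}_{i,1}t^{i}\right)\prod_{i=1}^{n}(t +_{\L} \overline{x}_{i})} \;=\; [t^{-1}]\!\left(t^{k-1}F_{n}(t)\right) \;=\; [t^{-k}](F_{n}(t)),
\end{equation*}
the last equality being a direct index shift in the Laurent expansion of $F_{n}(t)$.

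For step (ii), I would take the coefficient description $\s^{\L}_{k}(\x_{n}) := [t^{-k}](F_{n}(t))$ as the definition for all $k \in \Z$, consistent with the partial extension to $k \geq 1-n$ already indicated in Example \ref{ex:ExamplesNUFSF} (2). The main technical point, which I would address carefully, is to specify the completed ring in which $F_{n}(t)$ admits a well-defined formal Laurent expansion around $t = \infty$: filtering by total degree in $x_{1},\ldots,x_{n}$ and in the $a^{\L}_{i,j}$, each factor $t +_{\L} \overline{x}_{i}$ has $t$-leading term exactly $t$, so $\prod_{i=1}^{n}(t +_{\L} \overline{x}_{i}) = t^{n}(1 + O(t^{-1}))$ is invertible in $\L[[x_{1},\ldots,x_{n}]]((t^{-1}))$, and the denominator factor $1 + \sum_{i \geq 1} a^{\L}_{i,1} t^{i}$ is absorbed similarly after multiplying through. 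Hence $F_{n}(t) = \sum_{m \leq 0} c_{m}\, t^{m}$ with unique coefficients $c_{m} \in \L[[x_{1},\ldots,x_{n}]]$, and the definition $\s^{\L}_{k}(\x_{n}) = c_{-k}$ is unambiguous. This completion issue is the only genuine obstacle, and it is routine once the filtration is set up correctly.

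For step (iii), once the identification $c_{-k} = \s^{\L}_{k}(\x_{n})$ holds for every $k \in \Z$, the generating function identity is purely formal: substituting $t = u^{-1}$ in the Laurent expansion gives
\begin{equation*}
F_{n}(u^{-1}) \;=\; \sum_{m} c_{m}\, u^{-m} \;=\; \sum_{k \in \Z} c_{-k}\, u^{k} \;=\; \sum_{k \in \Z} \s^{\L}_{k}(\x_{n})\, u^{k},
\end{equation*}
which is precisely the right-hand side of (\ref{eqn:GeneratingFunctionNUSF}) after inserting the explicit form of $F_{n}(t)$. This completes the proof outline.
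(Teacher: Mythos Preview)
Your proposal is correct and follows essentially the same route as the paper: the paper's argument, laid out in the paragraphs immediately preceding the theorem, consists precisely of your three steps (Quillen's residue formula applied to $f(t)=t^{k+n-1}$ for $k\geq 1$, extension by definition to all $k\in\Z$, and the formal substitution $t=u^{-1}$). Your only addition is a more explicit discussion of the completed ring in which the Laurent expansion of $F_{n}(t)$ lives, which the paper leaves implicit.
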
 

\begin{rem} 
\quad 

\begin{enumerate} 
\item 
In Hudson-Matsumura \cite[Definition 3.1]{Hudson-Matsumura2016},  
the Segre class  in the algebraic cobordism theory  $\mathscr{S}_{m}(E) \; (m \in \Z)$ of  a complex vector bundle $E$ 
is defined  by using the push-forward image  from the projective bundle 
$G^{1}(E) \cong P(E^{\vee})$.\footnote{
For the $K$-theoretic analogue of the Segre classes of a vector bundle, 
see Buch  \cite[Lemma 7.1]{Buch2002(Duke)}. 
}
   By definition, these classes coincide with 
our $\s^{\L}_{m}(E)$.  Furthermore, they obtained the generating function 
$\mathscr{S}(E; u) := \sum_{m \in \Z}  \mathscr{S}_{m}(E) u^{m}$ 
of the Segre classes \cite[Theorem 3.6]{Hudson-Matsumura2016}  by a  different method from  ours. One can check directly  that their result coincides with our Theorem $\ref{thm:GeneratingFunctionNUSF}$.  As for their argument,   readers are recommended to consult  Hudson-Ikeda-Matsumura-Naruse \cite[\S 3.1]{HIMN2015}.  In that paper, the $K$-theoretic Segre classes are introduced by the same way as above, 
and the generating function of the stable Grothendieck polynomials 
is given \cite[Theorem 3.2, Appendix 8]{HIMN2015}.  Their result can also be obtained 
from Theorem $\ref{thm:GeneratingFunctionNUSF}$ by the specialization $a^{\L}_{1, 1} = \beta$
and $a^{\L}_{i, j} = 0$ for all $(i, j)  \neq (1, 1)$.    

\item  Since our formula $(\ref{eqn:GeneratingFunctionNUSF})$ is universal, 
          one can obtain the generating function of the ``$h^{*}$-theory Segre 
          classes''  of vector bundle by sepecializing the universal formal group  
         law $F_{\L}(u, v)$ to the formal group law $F_{h}(u, v)$ corresponding 
        to a given complex-oriented cohomology theory $h^{*}(-)$.   
        For instance, we  recently  obtained a concrete expression 
       of the ``Elliptic Schur functions'' corresponding to a cohomology 
      theory, denoted $\mathcal{S E}^{*} (-)$, whose formal group law is  that of a singular cubic curve in 
       Weierstrass form, called hyperbolic $($see Lenart-Zainoulline 
      \cite{Lenart-Zainoulline2014}, \cite{Lenart-Zainoulline2015}$)$.   
\end{enumerate}  
\end{rem}

\subsection{Application of the Gysin formulas for the new universal 
Schur functions --Thom-Porteous formula for the complex  cobordism theory--  
} 
\label{subsec:Thom-PorteousFormulaComplexCobordism}  
Using  the new universal Schur functions, one can formulate 
the Thom-Porteous formula (\ref{eqn:Thom-PorteousFormulaII}) in the 
universal setting.  We use the same notation as in \S \ref{subsubsec:Thom-PorteousFormula}. 
As explained in that subsection, in order to obtain the class determined by 
$D_{r}(\varphi)$, we have to compute the image 
$\pi_{F *} (c^{MU}_{e(f-r)} (\pi^{*}_{F}(E)^{\vee} \otimes Q_{F}))$.  
Let $x_{1}, \ldots, x_{f}$ (resp. $b_{1}, \ldots, b_{e}$) be  the $MU^{*}$-theory 
Chern roots of $F$ (resp. $E$).  
The Chern roots of $Q_{F}$ are $x_{1}, \ldots, x_{f-r}$. 
  Let $\lambda = ((e-r)^{(f-r)})$ be the rectangular partition with $(f-r)$ rows and 
$(e-r)$ columns as in \S \ref{subsubsec:Thom-PorteousFormula}. 
Then by the splitting principle, the top Chern class is given by 
\begin{equation*} 
   c^{MU}_{e(f-r)} (\pi^{*}_{F}(E)^{\vee} \otimes Q_{F})  
  =  \prod_{i=1}^{f-r} \prod_{j=1}^{e} (x_{i} +_{\L}   \overline{b}_{j})  
 = \prod_{i=1}^{f-r} [x_{i}|  \overline{\b}_{e}]^{e}   =   (\x|  \overline{\b}_{e})^{[\lambda]}.   
\end{equation*} 
Here $\overline{\b}_{e} = (\overline{b}_{1}, \ldots,  \overline{b}_{e}, 0, 0, \ldots)$.  
Therefore by   (\ref{eqn:eta_lambda((x|b)^[lambda])}) (see also Theorem \ref{thm:Nakagawa-Naruse(CharacterizationNUSF)}  and  Example \ref{ex:GysinFormulasNUSF} (2)), one obtains\footnote{
Since  we have 
   $\s^{\L}_{(e^{(f-r)})}  (\x_{f-r}|  \overline{\b}_{e})   
                            =  \prod_{i=1}^{f-r}  [x_{i}|  \overline{\b}_{e}]^{e}$,  
this formula can be written as 
\begin{equation*} 
    \pi_{F *}  (\s^{\L}_{(e^{(f-r)})}  (\x_{f-r}|  \overline{\b}_{e}))  
=  \s^{\L}_{((e-r)^{(f-r)})}(\x_{f}|  \overline{\b}_{e}).  
\end{equation*} 
} 
\begin{equation*} 
      \pi_{F *} (c^{MU}_{e(f-r)} (\pi^{*}_{F}(E)^{\vee} \otimes Q_{F})) 
    = \pi_{F *} ((\x|  \overline{\b}_{e})^{[\lambda]})  
    = \s^{\L}_{\lambda}(\x_{f} |  \overline{\b}_{e}).  
\end{equation*} 
\begin{theorem} [Thom-Porteous formula for the complex cobordism theory]   \label{thm:Thom-PorteousFormulaComplexCobordism}  
If the codimension of $D_{r}(\varphi)$ is $(e-r)(f-r)$,  then the class 
determined by $D_{r}(\varphi)$ is given by  the new universal factorial Schur 
function $\s^{\L}_{((e-r)^{(f-r)})}(\x_{f}|  \overline{\b}_{e})$.  

\end{theorem}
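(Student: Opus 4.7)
The plan is to mirror the classical Thom-Porteous argument outlined in \S \ref{subsubsec:Thom-PorteousFormula}, but now working in the complex cobordism theory $MU^{*}(-)$ and replacing the classical Schur polynomials with the new universal factorial Schur functions. First I would introduce the Grassmann bundle $\pi_{F}: G^{f-r}(F) \longrightarrow X$ parametrizing rank $(f-r)$ quotient bundles of $F$, together with its tautological exact sequence $0 \to S_{F} \to \pi_{F}^{*}(F) \twoheadrightarrow Q_{F} \to 0$, where $\rank S_{F} = r$ and $\rank Q_{F} = f-r$. The bundle homomorphism $\pi_{F}^{*}(E) \overset{\pi_{F}^{*}\varphi}{\longrightarrow} \pi_{F}^{*}(F) \twoheadrightarrow Q_{F}$ is a global section $s_{\varphi}$ of the complex vector bundle $\pi_{F}^{*}(E)^{\vee} \otimes Q_{F}$, whose rank equals $e(f-r)$. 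A point $W \in G^{f-r}(F)$ over $x \in X$ lies in the zero locus $Z(s_{\varphi})$ exactly when $\Im \varphi_{x} \subseteq \ker(F_{x} \twoheadrightarrow W)$, i.e.\ when $\rank \varphi_{x} \leq r$, so the proper map $\pi_{F}|_{Z(s_{\varphi})} \colon Z(s_{\varphi}) \twoheadrightarrow D_{r}(\varphi)$ is surjective and generically one-to-one over the locus where $\rank \varphi_{x} = r$.

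Next, under the hypothesis that $D_{r}(\varphi)$ has the expected codimension $(e-r)(f-r)$, the bundle rank $e(f-r)$ of $\pi_{F}^{*}(E)^{\vee} \otimes Q_{F}$ equals the codimension of $Z(s_{\varphi})$ in $G^{f-r}(F)$, so $s_{\varphi}$ is a regular section. Standard properties of the Gysin map in $MU^{*}$ (see e.g.\ Quillen's geometric interpretation in \cite{Quillen1971}) then yield
\begin{equation*}
 [Z(s_{\varphi})] \;=\; c^{MU}_{e(f-r)}\bigl(\pi_{F}^{*}(E)^{\vee} \otimes Q_{F}\bigr)
\end{equation*}
in $MU^{*}(G^{f-r}(F))$, and pushing forward gives
\begin{equation*}
 [D_{r}(\varphi)] \;=\; \pi_{F *}\Bigl(c^{MU}_{e(f-r)}\bigl(\pi_{F}^{*}(E)^{\vee} \otimes Q_{F}\bigr)\Bigr).
\end{equation*}

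To finish, I would invoke the computation immediately preceding the theorem: applying the splitting principle, the top Chern class factors as $\prod_{i=1}^{f-r}\prod_{j=1}^{e}(x_{i} +_{\L} \overline{b}_{j}) = (\x|\overline{\b}_{e})^{[\lambda]}$ with $\lambda = ((e-r)^{(f-r)})$, where $x_{1},\ldots,x_{f-r}$ are the $MU^{*}$-theory Chern roots of $Q_{F}$. Since $\lambda$ has exactly two ``blocks'' of equal parts so that the partial flag bundle $\F\ell^{\lambda}(F)$ coincides with $G^{f-r}(F) = G^{q}(F)$ with $q = f-r$, Theorem \ref{thm:Nakagawa-Naruse(CharacterizationNUSF)} (specifically the factorial version \eqref{eqn:eta_lambda((x|b)^[lambda])}) yields
\begin{equation*}
 \pi_{F *}\bigl((\x|\overline{\b}_{e})^{[\lambda]}\bigr) \;=\; \s^{\L}_{((e-r)^{(f-r)})}(\x_{f}|\overline{\b}_{e}),
\end{equation*}
which is the asserted formula.

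The main obstacle will be the geometric input in the second step: justifying that the class $[Z(s_{\varphi})]$ equals the top Chern class $c^{MU}_{e(f-r)}(\pi_{F}^{*}(E)^{\vee} \otimes Q_{F})$ in the cobordism ring. In the classical Chow/cohomology setting one invokes regularity of $s_{\varphi}$ together with the zero-section formula; in the $MU^{*}$-setting (or the algebraic cobordism setting of Levine-Morel \cite{Levine-Morel2007}) one needs the analogous fact that a regular section of a complex-oriented vector bundle represents its top Chern class. This is where an appropriate genericity or transversality hypothesis on $\varphi$ must be imposed, and where one uses complex-orientability in an essential way. Once this identification is in place, the remainder is a direct combination of the Gysin formula for new universal factorial Schur functions established in Theorem \ref{thm:Nakagawa-Naruse(CharacterizationNUSF)} together with the elementary Chern root computation already carried out just above the theorem.
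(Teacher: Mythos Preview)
Your proposal is correct and follows essentially the same approach as the paper: the paper's argument consists precisely of the computation preceding the theorem statement, which identifies $c^{MU}_{e(f-r)}(\pi_{F}^{*}(E)^{\vee}\otimes Q_{F})=(\x|\overline{\b}_{e})^{[\lambda]}$ via the splitting principle and then applies the Gysin formula \eqref{eqn:eta_lambda((x|b)^[lambda])} for the Grassmann bundle $G^{f-r}(F)=\F\ell^{\lambda}(F)$. In fact you supply more detail on the geometric input (the regular-section identification $[Z(s_{\varphi})]=c^{MU}_{\mathrm{top}}$) than the paper, which simply refers back to the classical setup in \S\ref{subsubsec:Thom-PorteousFormula}.
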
 

\subsection{Application of the Gysin formulas for the new universal Schur functions 
--Class  of  Damon's resolution--}      \label{subsec:Damon'sResolution}  
As we mentioned in the introduction \S $\ref{subsec:GysinFormulasSchurFunctions}$, 
it is well-known that the usual Schur polynomials $s_{\lambda}(\x_{d})$, 
with  $\lambda$ contained in the rectangular partition $((n-d)^{d})$, 
represent the Schubert classes in the ordinary cohomology ring $H^{*}(G_{d}(\C^{n}))$
 of the complex Grassmannian $G_{d}(\C^{n})$ 
of $d$-dimensional linear subspaces in $\C^{n}$ 
 (see, e.g., Fulton \cite[\S 9.4]{Fulton1997}).  
In this subsection, by making use of the  Damon's resolution 
of the Schubert varieties (Damon \cite[\S 3, p.258]{Damon1974}), together with our 
Gysin formula (\ref{eqn:eta_lambda((x|b)^[lambda])}),       
we shall  show that the new universal Schur functions $\s^{\L}_{\lambda}(\y_{d})$, 
here $y_{1}, \ldots, y_{d}$ are the $MU^{*}$-theory Chern roots of   the 
dual bundle $S^{\vee}$ of the tautological vector bundle $S$ on $G_{d}(\C^{n})$, 
give the ``correct'' representatives of the ``Schubert classes'' in 
the complex cobordism ring $MU^{*}(G_{d}(\C^{n}))$.  
\subsubsection{Bundle of Schubert varieties}  
In fact, we can formulate our assertion in more general situation: 
Let $E \overset{p}{\longrightarrow}   X$ be a complex vector bundle of rank $n$ over a
variety $X$.  For a positive integer $1 \leq d \leq n-1$, 
consider the associated   Grassmann bundle $\pi:  G_{d}(E) \longrightarrow X$ of $d$-dimensional  subspaces in the fibers of $E$. 
On $G_{d}(E)$, one has the tautological exact sequence of vector bundles
\begin{equation*} 
    0  \longrightarrow S  \hooklongrightarrow \pi^{*}(E) \twoheadlongrightarrow 
   Q  \longrightarrow  0, 
\end{equation*} 
where $\rank S = d$ and $\rank Q = n-d$.  
Suppose that we are given a complete flag of subbundles
\begin{equation*} 
    F_{\bullet}:  0 = F_{0}  \subset F_{1} \subset \cdots  \subset F_{i}  \subset  \cdots 
    \subset F_{n-1}  \subset F_{n} = E, 
\end{equation*} 
where $\rank F_{i}  = i \; (i = 0, 1, 2, \ldots, n)$.  
For any partition $\lambda  = (\lambda_{1}, \ldots, \lambda_{d})  
\subset ((n-d)^{d})$,  
the {\it bundle of Schubert varieties}\footnote{cf. Damon \cite[\S 1, p.251]{Damon1974},  Hudson-Ikeda-Matsumura-Naruse \cite[\S 4.1]{HIMN2015},  Darondeau-Pragacz \cite[\S 1.1]{Darondeau-Pragacz2016}.  
}  
$\Omega_{\lambda} (F_{\bullet})  \subset G_{d}(E)$ is defined by the so-called 
``Schubert conditions'', namely 
\begin{equation*} 
  \Omega_{\lambda} (F_{\bullet})  
:=  \{  W    \in G_{d}(E)  \; | \;    W  \subset E_{x}  \; (x \in X), \;  
       \dim \, (W  \cap (F_{n - d + i - \lambda_{i}})_{x})   \geq i  \; (i = 1, 2, \ldots, d)    \}. 
\end{equation*}

\subsubsection{Damons's resolution of singularities of $\Omega_{\lambda} (F_{\bullet})$}  
Following Damon \cite[\S 3, p.258]{Damon1974}, we will construct a resolution of singularities 
(or  desingularization) of 
$\Omega_{\lambda} (F_{\bullet})$ (with a slight modification).  
For a partition $\lambda = (\lambda_{1}, \ldots, \lambda_{d})  \subset ((n-d)^{d})$, 
we have a decomposition $[d] = I_{1} \sqcup I_{2} \sqcup \cdots \sqcup I_{k}$ 
as in \S \ref{subsec:UHLF}.  Putting  $m_{i} := \sharp  I_{i} \; (i = 1, 2, \ldots, k)$, 
we can rewrite 
\begin{equation*} 
   \lambda = (n_{1}^{m_{1}} \; n_{2}^{m_{2}} \cdots n_{k}^{m_{k}}) \quad 
 (n-d \geq  n_{1} > n_{2} > \cdots > n_{k} \geq 0). 
\end{equation*} 
   We put 
 $\nu (p) := \sum_{i=1}^{p} m_{i} \; (p = 1, 2, \ldots, k)$  and $\nu (0) := 0$.  
Since the variety $\Omega_{\lambda} (F_{\bullet})$ is determined by the 
Schubert conditions corresponding to  ``outside corners'' of the 
Young diagram of $\lambda$ (see Fulton \cite[\S 9.4, Exercise 18]{Fulton1997}), 
one can also define $\Omega_{\lambda}(F_{\bullet})$ by 
\begin{equation}  \label{eqn:DefinitionSchubertVariety(EssentialBox)} 
\begin{array}{llll}   
  & \Omega_{\lambda} (F_{\bullet})  
=  \{  W    \in G_{d}(E)  \; | \;    W  \subset E_{x}  \; (x \in X),   \medskip \\
  &  \hspace{3.5cm}  
     \dim \, (W  \cap (F_{n - d + \nu (p) - n_{p}})_{x})   \geq  \nu (p) \; (p = 1, 2, \ldots, k)    \}.  \medskip 
\end{array}   
\end{equation}

Now consider the partial flag bundle $\varpi:  \F \ell_{\nu (1), \nu (2), \ldots, \nu (k-1)} (S) \longrightarrow G_{d}(E)$ associated with the tautological subbundle $S$ over $G_{d}(E)$. 
The fiber over  a point $W  \in G_{d}(E)$ consists of partial flags  in  a $d$-dimensional 
linear subspace $W  \subset E_{x}$ for some point $x \in X$, namely 
a nested sequence of linear subspaces of the form 
\begin{equation*} 
     W_{1}  \subset W_{2}  \subset \cdots \subset W_{k-1} \subset W_{k} = W, 
\end{equation*} 
where $\dim W_{p}  = \nu (p) \; (p = 1, \ldots, k)$.  
On $\F \ell_{\nu (1), \nu (2), \ldots, \nu (k-1)} (S)$, one has the tautological 
sequence of flag of sub and quotient bundles
\begin{equation*} 
\begin{array}{llll} 
    & D_{1}  \hooklongrightarrow D_{2} \hooklongrightarrow \cdots 
    \hooklongrightarrow D_{p} \hooklongrightarrow \cdots \hooklongrightarrow 
    D_{k-1}  \hooklongrightarrow D_{k} = \varpi^{*}(S),   \medskip \\
     &  \varpi^{*}(S)  \twoheadlongrightarrow Q_{1}  \twoheadlongrightarrow Q_{2} 
   \twoheadlongrightarrow \cdots \twoheadlongrightarrow Q_{p} \twoheadlongrightarrow
  \cdots \twoheadlongrightarrow  Q_{k-1} \twoheadlongrightarrow Q_{k}  = 0, 
\end{array}  
\end{equation*}
where $\rank D_{p} = \nu (p)$,  and $Q_{p}$ is defined by $Q_{p} := \varpi^{*}(S)/D_{p} \; (p = 1, 2, \ldots, k)$.  
Then the partial flag bundle $\F \ell_{\nu (1), \nu (2), \ldots, \nu (k-1)}(S)$ is constructed 
as a tower of Grassmann bundles (here we omit the pull-back notation of 
vector bundles as is customary): 
\begin{equation*} 
\begin{array}{lll}  
  & \F \ell_{\nu (1), \nu (2), \ldots, \nu (k-1)} (S)  =  G_{m_{k}}(Q_{k-1}) 
       \overset{\varpi_{k}}{\underset{=}{\longrightarrow}} G_{m_{k-1}}(Q_{k-2})  
       \overset{\varpi_{k-1}}{\longrightarrow}  \cdots   \medskip \\
 & \hspace{7cm}        \overset{\varpi_{3}}{\longrightarrow}  G_{m_{2}}(Q_{1}) 
      \overset{\varpi_{2}}{\longrightarrow} G_{m_{1}}(S) 
      \overset{\varpi_{1}}{\longrightarrow} G_{d}(E).    \medskip 
\end{array}   
\end{equation*} 
The natural projection 
$\varpi = \varpi_{1} \circ \varpi_{2} \circ \cdots \circ \varpi_{k}: \F \ell_{\nu (1), \nu (2), \ldots, \nu (k-1)}(S)  \longrightarrow G_{d}(E)$ sends a point $(0 \subset W_{1} \subset \cdots \subset W_{k-1} \subset  W_{k} = W)  \in \F \ell_{\nu (1), \nu (2), \ldots, \nu (k-1)} (S)$
to   the point $W  \in G_{d}(E)$.  
Note that the tautological exact sequence of vector bundles  over 
the Grassman bundle $G_{m_{p}}(Q_{p-1})$ is regarded as  
\begin{equation*} 
                            0  \longrightarrow  D_{p}/D_{p-1} \hooklongrightarrow    Q_{p-1}  \twoheadlongrightarrow Q_{p} \longrightarrow 0.  
\end{equation*} 

We then define  a subvariety $X_{k}$ of  $\F \ell_{\nu (1), \nu (2), \ldots, \nu (k-1)} (S)$  
by 
\begin{equation*} 
\begin{array}{llll}  
X_{k}  & :=  \{ (0 \subset W_{1} \subset \cdots \subset W_{k-1}  \subset W_{k} = W )  
      \in \F \ell_{\nu (1), \nu (2), \ldots, \nu (k-1)}  (S)   \medskip \\
       & \hspace{3cm}     | \;  W  \subset E_{x} \; (x \in X),  \;  
       W_{p}  \subset (F_{n - d +  \nu (p) - n_{p}})_{x} \; (p = 1, 2, \ldots, k)  \}.     \medskip 
\end{array} 
\end{equation*}  
By definition,  a point  $(0 \subset W_{1} \subset \cdots \subset W_{k-1} \subset W_{k} = W) 
\in X_{k}$  satisfies  the conditions 
$W_{p}  \subset (F_{n - d + \nu (p) - n_{p}})_{x}$, 
and hence 
$W  \cap (F_{n - d + \nu (p) - n_{p}})_{x}   \supset W_{p}$ for $p = 1, 2, \ldots, k$. 
Therefore its image $W  \in G_{d}(E)$ under the projection  $\varpi$ satifies the conditions 
$\dim \, (W \cap (F_{n - d + \nu (p) - n_{p}})_{x}) \geq \nu (p) \; (p = 1, 2, \ldots, k)$. 
Thus   $W$ is in   
$\Omega_{\lambda}(F_{\bullet})$ by the definition  (\ref{eqn:DefinitionSchubertVariety(EssentialBox)})  of $\Omega_{\lambda} (F_{\bullet})$.    
The map  $\varpi|_{X_{k}}:  X_{k} \longrightarrow  \Omega_{\lambda}(F_{\bullet})$
is a resolution of singularities of $\Omega_{\lambda}(F_{\bullet})$ 
constructed by Damon.\footnote{Damon's resolution  is one of the 
{\it small resolution} of singularities of a Schubert variety 
constructed by Zelevinsky \cite{Zelevinsky1983}.   
}

\subsubsection{
Class of Damon's resolution} 
Having constructed the resolution $X_{k}$   
of $\Omega_{\lambda} (F_{\bullet})$, we then define 
the {\it Damon class}  $\delta_{\lambda}  \in MU^{*}(G_{d}(E))$
associated to a partition $\lambda \subset ((n-d)^{d})$ as the 
push-forward image $\varpi_{*} ([X_{k}])$ of the class 
$[X_{k}]  \in  MU^{*}(\F \ell_{\nu (1), \nu (2), \ldots \nu (k)}(S))$.  
One can  compute the class $[X_{k}]$ explicitly by the  standard
fact about the top Chern class of a vector bundle (see e.g., 
Quillen \cite[\S 2]{Quillen1971}, Levine-Morel \cite[Lemma 6.6.7]{Levine-Morel2007}):  
The vector bundle homomorphism $D_{p}/D_{p-1}  \hooklongrightarrow  E \twoheadlongrightarrow E/F_{n - d + \nu (p) - n_{p}}$    over $\F \ell_{\nu (1), \nu (2), \ldots, \nu (k-1)}  (S)$   
defines a   section $s_{p}$ of the  vector bundle 
 $\Hom \, (D_{p}/D_{p-1}, E/F_{n - d + \nu (p) - n_{p}})   
\cong (D_{p}/D_{p-1})^{\vee}  \otimes E/F_{n - d + \nu (p) - n_{p}}$
for $p = 1, 2, \ldots, k$,  and  
the conditions $W_{p}   \subset (F_{n - d + \nu (p) - n_{p}})_{x}$ for $p = 1, 2, \ldots, k$
means that  this  section $s_{p}$  vanishes. 
Thus the variety $X_{k}  \subset \F \ell_{\nu (1), \nu (2), \ldots, \nu (k)}(S)$
is given by the zero locus of the section $s = \bigoplus_{p=1}^{k} s_{p}$ of the vector bundle 
$\bigoplus_{p=1}^{k}  (D_{p}/D_{p-1})^{\vee}  \otimes E/F_{n - d + \nu (p) - n_{p}}$. 
Therefore the class $[X_{k}]$ is given by the top Chern class of this vector 
bundle, that is, 
\begin{equation*} 
     [X_{k}]   = \prod_{p=1}^{k}  c^{MU}_{m_{p} + n_{p} + d  - \nu (p)}  ((D_{p}/D_{p-1})^{\vee}  \otimes E/F_{n - d + \nu (p) - n_{p}} ).    
\end{equation*} 
In order to proceed with the computation, 
we consider the full flag bundle $\F \ell_{1, 2, \ldots, d-1}(S)   = \F \ell (S) \longrightarrow 
G_{d}(E)$.   On $\F \ell (S)$, one has the tautological sequence of 
flag of subbundles 
\begin{equation*} 
  S_{1} \hooklongrightarrow S_{2} \hooklongrightarrow \cdots \hooklongrightarrow 
  S_{i}  \hooklongrightarrow \cdots \hooklongrightarrow  S_{d-1} \hooklongrightarrow S_{d} = S, 
\end{equation*} 
where $\rank S_{i} = i \; (i = 1, 2, \ldots, d)$.  
By the natural projection $\F \ell (S)  \longrightarrow \F \ell_{\nu (1), \nu (2), \ldots, \nu (k)} (S)$, the bundle $D_{p}$ over $\F \ell_{\nu (1), \nu (2), \ldots, \nu (k)}(S)$ 
is  pulled-back to $\bigoplus_{i=1}^{\nu (p)} S_{i}$.   Now we set 
\begin{equation*} 
    y_{i} :=  c^{MU}_{1} ((S_{i}/S_{i-1})^{\vee}) \quad (i = 1, 2, \ldots, d). 
\end{equation*} 
These are the $MU^{*}$-theory Chern roots 
of the dual bundle $S^{\vee}$.   Also  we set 
\begin{equation*} 
             b_{i} :=  c^{MU}_{1}(F_{n + 1 - i}/F_{n - i})   \in MU^{*}(X)   \; (i = 1, 2, \ldots, n). 
\end{equation*} 
These   are 
the $MU^{*}$-theory Chern roots of $E$.    
Then by the splitting principle,  we have 
\begin{equation*} 
\begin{array}{lll} 
   [X_{k}]   & =  \displaystyle{\prod_{p=1}^{k}}    c^{MU}_{m_{p} + n_{p} + d  - \nu (p)}  ((D_{p}/D_{p-1})^{\vee}  \otimes E/F_{n - d + \nu (p) - n_{p}} )  
                  =  \prod_{p=1}^{k}  \left (\prod_{\substack{\nu (p-1) < i \leq  \nu (p) \\
                                                                     1 \leq j \leq n_{p} + d - \nu (p)   }  
                                                            } (y_{i} +_{\L} b_{j})
                                             \right )  \medskip \\
            & =  \displaystyle{\prod_{p=1}^{k}}  \left ( \prod_{\nu (p-1) < i \leq \nu (p)} 
                                                                 [y_{i} |  \b_{n}]^{n_{p} + d - \nu (p)}  
                                                           \right )  
     =  (\y|  \b_{n})^{[\lambda]}.  
\end{array}
\end{equation*} 
Here $\b_{n} = (b_{1},  \ldots,  b_{n}, 0, 0, \ldots)$.  
Thus we have  
\begin{equation*} 
   \delta_{\lambda}  =  \varpi_{*}  ((\y| \b_{n})^{[\lambda]} ).  
\end{equation*}

Now, for a partition 
\begin{equation*} 
  \lambda = (n_{1}^{m_{1}} \; n_{2}^{m_{2}} \; \cdots \; n_{k}^{m_{k}}), \quad 
   n-d  \geq n_{1} > n_{2} > \cdots > n_{k} \geq 0,  
\end{equation*} 
consider the partial flag bundle 
$\eta_{\lambda}: \F \ell^{\lambda} (S^{\vee})  \longrightarrow G_{d}(E)$
associated with the vector bundle    $S^{\vee}  \longrightarrow G_{d}(E)$. 
Since  we have 
\begin{equation*} 
\begin{array}{llll} 
  \F \ell^{\lambda}(S^{\vee})  & = \F \ell^{d-m_{k}, d-m_{k}-m_{k-1},  \ldots, d-m_{k} - m_{k-1} - \cdots - m_{2}}(S^{\vee})   = \F  \ell_{m_{k}, m_{k} + m_{k-1}, \ldots, m_{k} + m_{k-1} + \cdots + m_{2}}(S^{\vee})   \medskip \\
  & \cong   \F \ell_{m_{1}, m_{1} + m_{2}, \ldots, m_{1} + m_{2} + \cdots + m_{k-1}}(S) 
     = \F \ell_{\nu (1), \nu (2), \ldots, \nu (k-1)} (S), 
\end{array} 
\end{equation*} 
the partial flag bundle 
 $\varpi:  \F \ell_{\nu (1), \nu (2), \ldots, \nu (k-1)}(S) \longrightarrow G_{d}(E)$
is identified with   the partial flag bundle 
$\eta_{\lambda}:  \F \ell^{\lambda}(S^{\vee})  \longrightarrow G_{d}(E)$. 
Therefore computing   $\varpi_{*} ((\y| \b_{n})^{[\lambda]})$ is equivalent 
to  computing  
$(\eta_{\lambda})_{*} ((\y| \b_{n})^{[\lambda]})$.  
Our Gysin formula (\ref{eqn:eta_lambda((x|b)^[lambda])}) then  
yields the following: 
\begin{theorem}   \label{thm:DamonClass}  
The  Damon  class $\delta_{\lambda}$ is represented  by 
the new universal factorial Schur function.  More precisely, we have 
\begin{equation*} 
     \delta_{\lambda}  
                          = \mathbb{S}^{\L}_{\lambda}(\y_{d} | \b_{n})  
  \in MU^{*}(G_{d}(E)).   
\end{equation*}   
\end{theorem}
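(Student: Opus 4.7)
The plan is to observe that essentially all the work has already been laid out in the paragraphs preceding the theorem, and that what remains is a clean application of the Gysin formula (\ref{eqn:eta_lambda((x|b)^[lambda])}) to the vector bundle $S^{\vee}$ over $G_d(E)$.

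First I would recall the computation carried out just above the statement: using Damon's resolution $\varpi|_{X_k} \colon X_k \to \Omega_\lambda(F_\bullet)$ and the description of $X_k$ as the zero locus of a section of $\bigoplus_{p=1}^k (D_p/D_{p-1})^{\vee} \otimes E/F_{n-d+\nu(p)-n_p}$, the splitting principle together with the standard identification of the class of a zero locus with the top Chern class of the corresponding bundle yields
\begin{equation*}
[X_k] \;=\; \prod_{p=1}^{k} \prod_{\nu(p-1) < i \leq \nu(p)} [y_i | \b_n]^{n_p + d - \nu(p)} \;=\; (\y|\b_n)^{[\lambda]},
\end{equation*}
so that $\delta_\lambda = \varpi_{*}((\y|\b_n)^{[\lambda]})$. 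This reduces the theorem to computing this push-forward.

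Next I would make explicit the identification of partial flag bundles $\F\ell_{\nu(1),\ldots,\nu(k-1)}(S) \cong \F\ell^{\lambda}(S^{\vee})$. Dualizing a flag of quotient bundles of $S^{\vee}$ of ranks $\nu(k-1), \nu(k-2), \ldots, \nu(1)$ (which is what $\F\ell^{\lambda}(S^{\vee})$ parametrizes by definition) produces a flag of subbundles of $S$ of ranks $\nu(1), \nu(2), \ldots, \nu(k-1)$, which is exactly the flag parametrized by $\F\ell_{\nu(1),\nu(2),\ldots,\nu(k-1)}(S)$; under this identification $\varpi$ becomes $\eta_\lambda$. Moreover, the classes $y_i = c_1^{MU}((S_i/S_{i-1})^{\vee})$ defined on the full flag bundle $\F\ell(S)$ are by construction the $MU^{*}$-theory Chern roots of $S^{\vee}$, so they play the role of ``$\x_n$'' in formula (\ref{eqn:eta_lambda((x|b)^[lambda])}) applied to the bundle $S^{\vee}$ of rank $d$. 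The elements $\b_n = (b_1, \ldots, b_n, 0, 0, \ldots)$, being pulled back from the base $X$ (via $G_d(E) \to X$), are scalars with respect to $\eta_\lambda$ and so serve as legitimate parameters ``$\b$'' in that Gysin formula.

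With these two identifications in place, a direct invocation of (\ref{eqn:eta_lambda((x|b)^[lambda])}) in the form given by Theorem \ref{thm:Nakagawa-Naruse(CharacterizationNUSF)}, applied to $E \rightsquigarrow S^{\vee}$, $\x_n \rightsquigarrow \y_d$, and $\b \rightsquigarrow \b_n$, gives
\begin{equation*}
\delta_\lambda \;=\; \varpi_{*}\bigl((\y|\b_n)^{[\lambda]}\bigr) \;=\; (\eta_\lambda)_{*}\bigl((\y|\b_n)^{[\lambda]}\bigr) \;=\; \s^{\L}_{\lambda}(\y_d \,|\, \b_n),
\end{equation*}
which is the desired conclusion. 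The only genuinely delicate point --- and the main ``obstacle'' --- is ensuring that one applies the Gysin formula to the \emph{dual} bundle $S^{\vee}$ rather than to $S$ itself; once this bookkeeping is verified (so that the monomial $(\y|\b_n)^{[\lambda]}$ on the Damon-resolution side really coincides with the monomial appearing in (\ref{eqn:eta_lambda((x|b)^[lambda])})), the proof is a one-line application of the previously established formula.
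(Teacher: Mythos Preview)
Your proposal is correct and follows essentially the same route as the paper: both reduce to the computation $\delta_\lambda = \varpi_*((\y|\b_n)^{[\lambda]})$, identify the partial flag bundle $\F\ell_{\nu(1),\ldots,\nu(k-1)}(S)$ with $\F\ell^{\lambda}(S^{\vee})$ so that $\varpi = \eta_\lambda$, and then invoke the Gysin formula (\ref{eqn:eta_lambda((x|b)^[lambda])}) for $S^{\vee}$ with Chern roots $\y_d$ and parameters $\b_n$. Your emphasis on the dualization $S \rightsquigarrow S^{\vee}$ as the key bookkeeping point is exactly the content of the paragraph immediately preceding the theorem in the paper.
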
  
By Theorem  \ref{thm:DamonClass}, 
one may call the new universal factorial Schur function  $\mathbb{S}^{\L}_{\lambda}(\x_{n}|\b)$
the  universal factorial Schur function of {\it Damon type}.

\subsection{Concluding remarks and related work
}  
So far, we have considered   two types of  universal analogues of usual Schur polynomials, 
namely, the {\it universal factorial Schur  functions}   $s^{\L}_{\lambda}(\x_{n}|\b)$ 
and the {\it new universal factorial Schur functions} or the {\it universal factorial 
Schur functions of Damon type}   $\s^{\L}_{\lambda}(\x_{n}|\b)$.  
In closing this paper, we shall introduce  another type of universal analogue 
of Schur polynomials, which was essentially introduced by 
Hudson-Matsumura \cite[Definition 4.1]{Hudson-Matsumura2016}, 
and state briefly our  new results.  Details will be discussed in our 
forthcoming paper \cite{Nakagawa-Naruse2016II}.   
Hudson-Matsumura     introduced  the class  $\kappa_{\lambda}$ 
$($in the algebraic cobordism ring 
$\Omega^{*} (G_{d}(E))$$)$ using the 
{\it Kempf-Laksov resolution} $($see  op. cit. \cite[\S 4.2]{Hudson-Matsumura2016}, 
Kempf-Laksov \cite{Kempf-Laksov1974}$)$ of the Schubert varieties.  They call the class $\kappa_{\lambda}$ the {\it Kempf-Laksov class}.  
Using the Gysin formula $(\ref{eqn:eta_lambda((x|b)^[lambda])})$ 
and Theorem $\ref{thm:Nakagawa-Naruse(GeneralFlagBundles)}$, 
we   are able to show   that the Kempf-Laksov class $\kappa_{\lambda}$ is given 
explicitly   by 
$($here $r$ is the length of $\lambda$$)$ 
\begin{equation}     \label{eqn:Kempf-LaksovClass}   
\begin{array}{llll}  
   \kappa_{\lambda}   
     &  = (\eta_{\rho_{r}})_{*}([\y| \b_{n}]^{\lambda + \rho_{r-1} + (d - r)^{r}}) \medskip \\
     &        =   \displaystyle{\sum_{\overline{w} \in S_{d}/(S_{1})^{r} \times S_{d-r}}}  
                                  w \cdot \left [ 
                                                    \dfrac{ 
                                                                [\y | \b_{n}]^{\lambda + \rho_{r-1} + (d - r)^{r}} 
                                                              }
                                                             {\prod_{1 \leq  i \leq r} \prod_{i  <  j \leq d} 
                                                                   (y_{i} +_{\L} \overline{y}_{j}) }
                                             \right ]. 
\end{array}  
\end{equation}
Motivated by this formula, we make the following definition: 
\begin{defn} [Universal factorial Schur functions of Kempf-Laksov type\footnote{
``K-L type'' for short.  
}] 
For a partition $\lambda  \in \mathcal{P}_{n}$ with length $\ell (\lambda) = r \leq n$, 
we define 
\begin{equation*} 
\begin{array}{llll}  
   s^{\L, KL}_{\lambda}(\x_{n}|\b)  
     &  :=  \displaystyle{\sum_{\overline{w}  \in S_{n}/(S_{1})^{r} \times S_{n-r}}}   
      w \cdot \left [ 
                   \dfrac{ 
                                [\x | \b]^{\lambda + \rho_{r-1} + (n - r)^{r}} 
                            }
                           {\prod_{1 \leq i  \leq r}  \prod_{i  <  j \leq n} 
                                             (x_{i} +_{\L} \overline{x}_{j}) }
                                             \right ]     \medskip \\
       &=   \displaystyle{\sum_{\overline{w}  \in S_{n}/(S_{1})^{r} \times S_{n-r}}}   
      w \cdot \left [ 
                   \dfrac{ 
                                  \prod_{i=1}^{r}  [x_{i}|\b]^{\lambda_{i}  - i + n}  
                            }
                           {\prod_{1 \leq i  \leq r}   \prod_{i  <  j \leq n} 
                                             (x_{i} +_{\L} \overline{x}_{j}) }
                                             \right ].      \medskip 
\end{array}   
\end{equation*} 
We also define 
\begin{equation*} 
         s^{\L, KL}_{\lambda}(\x_{n}) :=   s^{\L, KL}_{\lambda}(\x_{n}| \bm{0}). 
\end{equation*} 
\end{defn} 

From Theorem \ref{thm:Nakagawa-Naruse(GeneralFlagBundles)},  we obtain 
the following theorem: 
\begin{theorem} [Characterization of the universal Schur functions of K-L type] 
\label{thm:Nakagawa-Naruse(CharacterizationUSFKL)}  
For the Gysin homomorphism $(\tau^{r})_{*}:   MU^{*}(\F \ell^{r, r-1, \ldots, 2, 1}(E)) 
\longrightarrow    MU^{*}(X)$,  the following formula holds$:$ 
\begin{equation*} 
    (\tau^{r})_{*} (\x^{\lambda + \rho_{r-1} + (n-r)^{r}}) 
         = s^{\L, KL}_{\lambda}(E). 
\end{equation*} 
Here $x_{1}, \ldots, x_{n}$ are the $MU^{*}$-theory Chern roots of $E$ 
as in \S $\ref{subsubsec:ApplicationBressler-EvensFormula(Cobordism)}$.   
\end{theorem}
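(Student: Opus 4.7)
The plan is to deduce the theorem as an immediate application of Theorem~\ref{thm:Nakagawa-Naruse(GeneralFlagBundles)}. First I would recognize $\tau^r : \F \ell^{r, r-1, \ldots, 2, 1}(E) \longrightarrow X$ as the partial flag bundle $\eta_{\nu}$ attached to any strict partition $\nu \in \mathcal{P}_n$ of length exactly $r$, as in Example~\ref{ex:PartialFlagBundles}(1). The corresponding decomposition of $[n]$ is $[n] = \{1\} \sqcup \{2\} \sqcup \cdots \sqcup \{r\} \sqcup \{r+1, \ldots, n\}$, with $(m_1, \ldots, m_r, m_{r+1}) = (1, \ldots, 1, n-r)$ and stabilizer $S_n^{\nu} = (S_1)^r \times S_{n-r}$. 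The pairs $(i,j)$ with $1 \leq i < j \leq n$ and $n(i) < n(j)$ then collapse to exactly those with $1 \leq i \leq r$ and $i < j \leq n$.

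Next I would expand the exponent $\lambda + \rho_{r-1} + ((n-r)^r)$. Padding $\lambda$ and $\rho_{r-1} = (r-1, r-2, \ldots, 1, 0)$ with zeros so that all three sequences have length $n$, the $i$-th entry equals $\lambda_i + (r-i) + (n-r) = \lambda_i + n - i$ for $1 \leq i \leq r$ and $0$ for $i > r$. Consequently
\begin{equation*}
\x^{\lambda + \rho_{r-1} + ((n-r)^r)} = \prod_{i=1}^{r} x_i^{\lambda_i + n - i},
\end{equation*}
which involves none of $x_{r+1}, \ldots, x_n$ and is therefore $(S_1)^r \times S_{n-r}$-invariant. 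Applying Theorem~\ref{thm:Nakagawa-Naruse(GeneralFlagBundles)} then yields
\begin{equation*}
(\tau^r)_* \bigl( \x^{\lambda + \rho_{r-1} + ((n-r)^r)} \bigr) = \sum_{\overline{w} \in S_n / (S_1)^r \times S_{n-r}} w \cdot \left[ \frac{\prod_{i=1}^{r} x_i^{\lambda_i + n - i}}{\prod_{1 \leq i \leq r,\ i < j \leq n} (x_i +_{\L} \overline{x}_j)} \right].
\end{equation*}

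To conclude, I would match the right-hand side with the definition of $s^{\L, KL}_{\lambda}(\x_n | \b)$: specializing $\b = \bm{0}$ turns $[x_i|\b]^{\lambda_i - i + n}$ into $x_i^{\lambda_i + n - i}$, so the expression above is precisely $s^{\L, KL}_{\lambda}(\x_n)$. Passing from this identity in $MU^*(\F\ell^{r, r-1, \ldots, 2, 1}(E))$ to the asserted identity $s^{\L, KL}_{\lambda}(E) \in MU^*(X)$ is formal, since the characteristic class $s^{\L, KL}_{\lambda}(E)$ is defined by the relation $(\tau^r)^*(s^{\L, KL}_{\lambda}(E)) = s^{\L, KL}_{\lambda}(\x_n)$ and $(\tau^r)^*$ is injective. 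I do not anticipate a genuine obstacle: the content of the theorem is entirely packaged into the Bressler-Evens-type formula of Theorem~\ref{thm:Nakagawa-Naruse(GeneralFlagBundles)}, and what remains is a straightforward bookkeeping of indices, the only care being to identify the decomposition attached to $\tau^r$ and to compute the padded exponent correctly.
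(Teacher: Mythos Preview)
Your proposal is correct and follows essentially the same approach as the paper, which simply states that the theorem follows from Theorem~\ref{thm:Nakagawa-Naruse(GeneralFlagBundles)}; you have spelled out the bookkeeping (the decomposition $[n]=\{1\}\sqcup\cdots\sqcup\{r\}\sqcup\{r+1,\ldots,n\}$, the exponent computation, and the specialization $\b=\bm{0}$) that the paper leaves implicit.
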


On the other hand,   
 following Hudson-Matsumura \cite[Definition 3.1]{Hudson-Matsumura2016},   we define the  {\it $MU^{*}$-theory $k$-th Segre class}  $\mathscr{S}^{\L}_{k}(E)$ 
of a complex vector bundle $E$ to be 
\begin{equation}    \label{eqn:SegreClass(ComplexCobordism)}     
       \mathscr{S}^{\L}_{k}(E)  :=  \pi_{1 *} (x_{1}^{k + n -1})
\end{equation} 
 in the same setting as in Theorem \ref{thm:QuillenResidueFormula}. 
Let 
\begin{equation*} 
    \mathscr{S}^{\L}_{t}(E)  = \mathscr{S}^{\L} (E; t) 
    = \sum_{k \in \Z}  \mathscr{S}^{\L}_{k}(E) t^{k} 
\end{equation*} 
be the {\it Segre series} of $E$ in the complex cobordism theory.  
Then by Example \ref{ex:GysinFormulasNUSF} (1) and (\ref{eqn:SegreClass(ComplexCobordism)}),   we see immediately 
that $\s^{\L}_{k}(E) = \mathscr{S}^{\L}_{k}(E)$, and therefore 
the Segre series $\mathscr{S}^{\L}_{t}(E)$ is given by 
Theorem \ref{thm:GeneratingFunctionNUSF}.   
Using the Segre series $\mathscr{S}^{\L}_{t}(E)$,   the 
{\it universal push-forward  formula for full flag bundle of type $A$}  
 established by Darondeau-Pragacz \cite[pp.4--5]{Darondeau-Pragacz2015}\footnote{
Note that this type of Gysin formula is also obtained by Ilori \cite[p.623, Theorem]{Ilori1978}, 
Kaji-Terasoma \cite[Theorem 0.4 (Push-Forward Formula)]{Kaji-Terasoma2015}.   
}
can be generalized to the complex cobordism theory.   
\begin{theorem} [Darondeau-Pragacz formula in complex cobordism] 
    In the same setting as in Theorem  $\ref{thm:Nakagawa-Naruse(CharacterizationUSFKL)}$, 
  one has 
\begin{equation*} 
     (\tau^{r})_{*}(f(x_{1}, \ldots, x_{r}))  
   =  [t_{1}^{n-1} \cdots t_{r}^{n-1}]  \left ( f(t_{1}, \ldots, t_{r}) 
   \prod_{1 \leq i < j \leq r}  (t_{j} +_{\L} \overline{t}_{i}) 
    \prod_{1 \leq i \leq r}  \mathscr{S}^{\L}_{1/t_{i}}  (E)  
                                                \right ) 
\end{equation*} 
for a polynomial $f(X_{1}, \ldots, X_{r})  \in MU^{*}(X)[X_{1}, \ldots, X_{r}]$.  
\end{theorem}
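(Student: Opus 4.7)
The plan is to argue by induction on $r$, factoring the flag bundle $\tau^{r}$ as a projective bundle over the smaller flag bundle $\tau^{r-1}$ and applying Quillen's residue formula (Theorem \ref{thm:QuillenResidueFormula}) one step at a time, exactly as in Darondeau--Pragacz's original argument in ordinary cohomology. The base case $r=1$ is immediate: since $\tau^{1}=\pi_{1}\colon G^{1}(E)\to X$, Quillen's residue formula gives $\pi_{1\ast}(f(\xi))=[t^{-1}]\tfrac{f(t)}{(1+\sum a_{i,1}^{\L}t^{i})\prod_{i=1}^{n}(t+_{\L}\overline{x}_{i})}$, and by the generating function formula (\ref{eqn:GeneratingFunctionNUSF}) this equals $[t_{1}^{n-1}]f(t_{1})\mathscr{S}^{\L}_{1/t_{1}}(E)$, matching the claimed formula since the Vandermonde-like product over $1\le i<j\le 1$ is empty.

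For the inductive step, factor $\tau^{r}=\tau^{r-1}\circ\rho$, where the map $\rho\colon\mathcal{F}\ell^{r,r-1,\ldots,1}(E)\longrightarrow\mathcal{F}\ell^{r-1,r-2,\ldots,1}(E)$ forgets the largest quotient $Q^{r}$. On $\mathcal{F}\ell^{r-1,\ldots,1}(E)$ we have the tautological quotient $E\twoheadrightarrow Q^{r-1}$ with kernel $K:=\ker(E\to Q^{r-1})$ of rank $n-r+1$; since specifying an extension $Q^{r}\twoheadrightarrow Q^{r-1}$ with $\rank Q^{r}=r$ is equivalent to specifying a rank-one quotient $K\twoheadrightarrow L^{r}$ (obtained by pushout), the map $\rho$ is identified with the projective bundle $G^{1}(K)\to\mathcal{F}\ell^{r-1,\ldots,1}(E)$, with tautological quotient line bundle $L^{r}$ satisfying $c_{1}^{MU}(L^{r})=x_{r}$. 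On the full flag bundle the $MU^{\ast}$-theory Chern roots of $K$ are precisely $x_{r},x_{r+1},\ldots,x_{n}$.

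Now apply Quillen's residue formula to $\rho$, treating $x_{1},\ldots,x_{r-1}\in MU^{\ast}(\mathcal{F}\ell^{r-1,\ldots,1}(E))$ as scalars via the projection formula. This yields
\begin{equation*}
\rho_{\ast}(f(x_{1},\ldots,x_{r}))=[t_{r}^{n-r}]\,f(x_{1},\ldots,x_{r-1},t_{r})\,\mathscr{S}^{\L}_{1/t_{r}}(K).
\end{equation*}
From the short exact sequence $0\to K\to E\to Q^{r-1}\to 0$ and the definition of the Segre series via Theorem \ref{thm:GeneratingFunctionNUSF}, we get the identity $\mathscr{S}^{\L}_{1/t}(K)=\mathscr{S}^{\L}_{1/t}(E)\cdot t^{-(r-1)}\prod_{i=1}^{r-1}(t+_{\L}\overline{x}_{i})$, and substituting gives
\begin{equation*}
\rho_{\ast}(f(x_{1},\ldots,x_{r}))=[t_{r}^{n-1}]\,f(x_{1},\ldots,x_{r-1},t_{r})\,\mathscr{S}^{\L}_{1/t_{r}}(E)\prod_{i=1}^{r-1}(t_{r}+_{\L}\overline{x}_{i}).
\end{equation*}

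Finally, push this forward by $\tau^{r-1}_{\ast}$. Since $\mathscr{S}^{\L}_{1/t_{r}}(E)$ is pulled back from $X$, it passes through $\tau^{r-1}_{\ast}$ by the projection formula, and the inductive hypothesis applied to the polynomial $h_{t_{r}}(x_{1},\ldots,x_{r-1}):=f(x_{1},\ldots,x_{r-1},t_{r})\prod_{i=1}^{r-1}(t_{r}+_{\L}\overline{x}_{i})$ produces the iterated coefficient $[t_{1}^{n-1}\cdots t_{r-1}^{n-1}]f(t_{1},\ldots,t_{r})\prod_{i=1}^{r-1}(t_{r}+_{\L}\overline{t}_{i})\prod_{1\le i<j\le r-1}(t_{j}+_{\L}\overline{t}_{i})\prod_{i=1}^{r-1}\mathscr{S}^{\L}_{1/t_{i}}(E)$. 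Merging the two Vandermonde-type products via the elementary identity $\prod_{i=1}^{r-1}(t_{r}+_{\L}\overline{t}_{i})\cdot\prod_{1\le i<j\le r-1}(t_{j}+_{\L}\overline{t}_{i})=\prod_{1\le i<j\le r}(t_{j}+_{\L}\overline{t}_{i})$ yields exactly the desired formula. The only point that requires genuine care (and is the main obstacle) is the Segre-series conversion $\mathscr{S}^{\L}_{1/t}(K)\leadsto\mathscr{S}^{\L}_{1/t}(E)$: one must verify that the Chern roots of $K$ really are the $x_{r},\ldots,x_{n}$ (so that the factorization of $\prod_{i=1}^{n}(t+_{\L}\overline{x}_{i})$ respects the short exact sequence) and that the degree shift $t_{r}^{n-r}\mapsto t_{r}^{n-1}$ is absorbed cleanly into the extra factor $\prod_{i=1}^{r-1}(t_{r}+_{\L}\overline{x}_{i})/t_{r}^{r-1}$; once this bookkeeping is set up, the rest is automatic.
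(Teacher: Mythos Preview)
The paper does not actually prove this theorem here; it is stated in the concluding remarks with details deferred to the forthcoming paper \cite{Nakagawa-Naruse2016II}. So there is no ``paper's own proof'' to compare against. That said, your inductive argument via Quillen's residue formula is correct and is exactly the natural extension of the Darondeau--Pragacz argument to complex cobordism. The factorization $\tau^{r}=\tau^{r-1}\circ\rho$ with $\rho\cong G^{1}(K)$, the Segre-series identity $\mathscr{S}^{\L}_{1/t}(K)=\mathscr{S}^{\L}_{1/t}(E)\cdot t^{-(r-1)}\prod_{i=1}^{r-1}(t+_{\L}\overline{x}_{i})$ (which follows at once from the explicit formula in Theorem~\ref{thm:GeneratingFunctionNUSF} and the factorization $\prod_{i=1}^{n}(t+_{\L}\overline{x}_{i})=\prod_{i=1}^{r-1}(t+_{\L}\overline{x}_{i})\cdot\prod_{i=r}^{n}(t+_{\L}\overline{x}_{i})$), and the final merging of the Vandermonde-type products are all sound.

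One small point of rigor: you call $h_{t_{r}}(x_{1},\ldots,x_{r-1})=f(x_{1},\ldots,x_{r-1},t_{r})\prod_{i=1}^{r-1}(t_{r}+_{\L}\overline{x}_{i})$ a ``polynomial'', but because of the formal inverses $\overline{x}_{i}$ it is a priori only a formal power series in $x_{1},\ldots,x_{r-1}$ (for each fixed power of $t_{r}$). This is harmless, since both sides of the inductive formula are $MU^{*}(X)$-linear in the input, and every element of $MU^{*}(\mathcal{F}\ell^{r-1,\ldots,1}(E))$ is an $MU^{*}(X)$-linear combination of monomials in $x_{1},\ldots,x_{r-1}$; equivalently, one may simply state and prove the theorem for power series inputs from the outset. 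It would strengthen the write-up to say this explicitly rather than sweep it under the word ``polynomial''.
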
  
This formula, together with Theorem \ref{thm:Nakagawa-Naruse(CharacterizationUSFKL)}, 
enables us to obtain the generating function 
of the universal Schur functions of K-L type (see Nakagawa-Naruse \cite{Nakagawa-Naruse2016II}).



\end{document}